\newtheorem{thm}{Theorem}[section]
\newtheorem{lemma}[thm]{Lemma}
\newtheorem{cor}[thm]{Corollary}
\newtheorem{prop}[thm]{Proposition}
\theoremstyle{definition}
\newtheorem{defi}[thm]{Definition}
\newtheorem{rmk}[thm]{Remark}
\newtheorem{exm}[thm]{Example}
\newcommand{\Enr}[1]{#1 \text{-}}
\newcommand{\Cat}{\mathsf{Cat}}
\newcommand{\Catadj}{\mathsf{Cat}_{\mathrm{adj}}}
\newcommand{\ECat}[1]{#1 \text{-}\Cat}
\newcommand{\ECatfc}[2]{\ECat{#1}_{#2}}
\newcommand{\Oper}{\mathsf{Oper}}
\newcommand{\EOper}[1]{#1 \text{-} \Oper}
\newcommand{\EOperfc}[2]{\EOper{#1}_{#2}}
\newcommand{\Cl}[1]{\mathbf{Col}(#1)}
\newcommand{\Opop}[1]{\mathsf{Op}_{#1}}
\newcommand{\Opcat}[1]{\mathsf{Cat}_{#1}}
\newcommand{\EMGraph}[1]{#1 \text{-} \mathsf{MultiGraph}}
\newcommand{\EGraph}[1]{#1 \text{-} \mathsf{Graph}}
\newcommand{\NSOper}{\mathsf{NSOper}}
\newcommand{\ENSOper}[1]{#1 \text{-}\NSOper}
\newcommand{\ENSOperfc}[2]{\ENSOper{#1}_{#2}}
\newcommand{\N}{\mathbb{N}}
\newcommand{\id}{\mathrm{id}}
\newcommand{\Set}{\mathcal{S}et}
\newcommand{\Top}{\mathsf{Top}}
\newcommand{\singSet}{\{*\}}
\newcommand{\HoC}[1]{\mathrm{Ho}(#1)}
\newcommand{\cell}[1]{#1 \mathrm{-cell}}
\newcommand{\inj}[1]{#1 \mathrm{-inj}}
\newcommand{\cof}[1]{#1 \mathrm{-cof}}
\newcommand{\Mm}{\mathcal{V}}
\newcommand{\Un}{\mathcal{I}}
\newcommand{\arr}[2]{#1 \longrightarrow #2}
\newcommand{\arro}[3]{#2 \!\overset{#1}\longrightarrow \! #3}
\newcommand{\morp}[3]{#1 \colon #2 \longrightarrow ~#3}
\newcommand{\ntra}[3]{#1 \colon #2 \Longrightarrow #3}
\newcommand{\gfun}[5]{
\begin{array}{r@{}c@{\hspace{0.1cm}}c@{\hspace{0.1cm}}l}
#1 \colon & #2 & \longrightarrow & #3 \\
             & #4 & \longmapsto & #5\\
\end{array}
}
\newcommand{\indl}[1]{{#1}_!}
\newcommand{\indr}[1]{{#1}^*}
\newcommand{\adjpair}[4]{#1 \colon #3 \rightleftarrows #4 \colon #2}
\newcommand{\indadjpair}[3]{\adjpair{{#1}_!}{{#1}^*}{#2}{#3}}
\newcommand{\diagc}[9]{\xymatrix{#1 \ar @{} [dr] |{#9} \ar[d]_{#5} \ar[r]^-{#6} &  #2 \ar[d]^-{#7}\\
          #3 \ar[r]_-{#8} & #4}}
\newcommand{\coeq}[4]{\xymatrix{#1 \ar@<0.7ex>[r]^-{#3} \ar@<-0.7ex>[r]_-{#4} &  #2}}
\newcommand{\coeqv}[4]{\xymatrix{#1 \ar@<0.7ex>[d]^-{#3} \ar@<-0.7ex>[d]_-{#4} \\  #2}}
\newcommand{\trees}[1]{\mathbf{T}(#1)}
\newcommand{\edge}[1]{\mathrm{edge}(#1)} 
\newcommand{\verx}[1]{\mathrm{vert}(#1)}
\newcommand{\leav}[1]{\mathrm{in}(#1)} 
\newcommand{\card}[1]{|#1|}
\newcommand{\arity}[1]{a(#1)}
\newcommand{\Opnsop}[1]{\mathsf{NSOp}_{#1}}
\newcommand{\troot}[1]{\mathrm{r}(#1)}
\newcommand{\opc}[1]{#1^{\mathrm{op}}}
\newcommand{\Sqofc}[1]{\mathrm{Seq}(#1)}
\newcommand{\RSqofc}[1]{\mathrm{RSeq}(#1)}
\newcommand{\treesc}[1]{\mathbf{T}_{#1}}
\newcommand{\mf}[1]{M_{#1}}
\newcommand{\LaxSCat}[1]{\Cat// #1}
\newcommand{\homcc}[1]{\pi_0(#1)}
\newcommand{\homcat}[1]{\mathrm{Ho}(#1)}
\newcommand{\EROper}[1]{#1 \text{-} \mathsf{ROper}}
\newcommand{\EROperfc}[2]{\EROper{#1}_{#2}}
\newcommand{\ERMGraph}[1]{#1 \text{-} \mathsf{RMultiGraph}}
\newcommand{\Oprop}[1]{\mathsf{ROp}_{#1}}
\newcommand{\Obf}{\mathbf{Ob}}
\newcommand{\Clf}{\mathbf{Cl}}
\newcommand{\trf}[1]{#1^{u}}
\newcommand{\trc}[1]{#1_{u}}
\newcommand{\Fibf}[1]{\mathcal{F}\mathrm{ib}_{#1}}
\newcommand{\Fib}[2]{\Fibf{#1}(#2)}
\newcommand{\Alg}[2]{\mathrm{Alg}_{#1}(#2)}
\newcommand{\obCat}{\mathbf{1}}
\numberwithin{equation}{subsection}
\title{A Model Structure for Enriched Coloured Operads}
\author{Giovanni Caviglia}
\begin{document}

\setcounter{tocdepth}{1}
\begin{abstract}
We prove that, under certain conditions, the model structure on a monoidal model category $\Mm$ can be transferred to a model structure on the category of $\Mm$-enriched coloured (symmetric) operads. As a particular case we recover the known model structure on simplicial operads.  
\end{abstract}

\maketitle

\tableofcontents

\section{Introduction}

Le $\Mm$ be a cofibrantly generated monoidal model category. In their paper \cite{BM12}, Berger and Moerdijk proved that under suitable assumptions on $\Mm$  the canonical model structure exists on $\ECat{\Mm}$, the category of (small) $\Mm$-enriched categories.

Symmetric coloured operads (also called symmetric multi-categories) can be regarded as generalized categories in which we allow the morphisms, now called \emph{operations}, to have an arbitrary number of inputs. 

Our goal in this paper is to generalize the results of \cite{BM12} in order to get a canonical model structure on $\EOper{\Mm}$, the category of $\Mm$-enriched coloured operads.

By the canonical model structure we mean the unique one (when it exists) for which the fibrant objects are the locally fibrant ones and the trivial fibrations are the local trivial fibrations which are surjective on the colours.

The main result of the paper is Theorem \ref{mod.str.thm} which gives sufficient conditions on a right proper model category $\Mm$ in order to guarantee the existence of the canonical model structure on $\EOper{\Mm}$. For $\Mm$ equal to simplicial sets one recovers the model structure on simplicial coloured operads presented in \cite{CM11} and \cite{Ro11}.

The existence of the canonical model structure on $\ENSOper{\Mm}$ and $\EROper{\Mm}$ (the categories of non-symmetric coloured operads and symmetric reduced operads respectively) is also established in Theorem $\ref{mod.str.thm}$ under weaker assumptions on $\Mm$.

It turns out that our hypotheses on $\Mm$ have to be stricter than the ones made in \cite{BM12} for a series of reasons. Given a set $C$ let us denote by $\ECatfc{\Mm}{C}$ the category whose objects are the $\Mm$-enriched categories with $C$ as set of objects and whose morphisms are the functors between them which are the identity at the level of objects. The category $\EOperfc{\Mm}{C}$ can be described as the category of algebras for a certain non-symmetric coloured operad $\Opcat{C}$ (whose set of colours is $C\times C$); if $\ECat{\Mm}$ admits the canonical model structure then $\ECatfc{\Mm}{C}$ admits the model structure transferred from $\Mm^{C\times C}$ through the free-forgetful adjunction induced by $\Opcat{C}$. In other words $\Opcat{C}$ has to be admissible in $\Mm$ for every $C\in \Set$.

In \cite{BM05} and \cite{Mu11} sufficient conditions on $\Mm$ are given in order to guarantee that every non-symmetric operad is admissible in $\Mm$.

The case of $\EOper{\Mm}$ is somehow similar: for every set $C$ we can consider the subcategory $\EOperfc{\Mm}{C}$ whose objects are the operads with $C$ as set of colours and whose morphisms are the ones which are the identity on colours. This time $\EOperfc{\Mm}{C}$ is the category of algebras of a symmetric operad $\Opop{C}$, that does not come from a non-symmetric one. 
For the canonical model structure over $\EOper{\Mm}$ to exists, we need $\Opop{C}$ to be admissible in $\Mm$ for every $C\in \Set$, and this can not be guaranteed  by the hypotheses in \cite{BM12}.

The fact that the operad is not non-symmetric forces us to restrict our hypotheses on $\Mm$ in order to apply \cite[Theorem 2.1]{BM05}.

Something more can be said in the case in which we restrict our-self to the category of non-symmetric coloured operads or to the category of reduced (symmetric) coloured operads (i.e. operads with no operations of arity $0$).

Again for every $C\in \Set$ we have an operad $\Opnsop{C}$ (resp. $\Oprop{C}$) whose algebras are non-symmetric (resp. reduced) coloured operads with set of colours $C$.
The operads $\Opnsop{C}$ are $\Oprop{C}$ are not non-symmetric but there is still some control on the actions of the symmetric groups since they give rise to tame polynomial monads. 

Admissibility of tame polynomial monads was studied by Batanin and Berger in \cite{BB13}.
Their results allow us to enlarge the class of model categories $\Mm$ for which $\ENSOper{\Mm}$ (the category of $\Mm$-enriched non-symmetric operads) and $\EROper{\Mm}$  (the category of $\Mm$-enriched reduced operads) admit the canonical model structure.


In the last section it is proved that under our assumptions the canonical model structure is right proper as well.

\subsection*{Acknowledgements} I am grateful to Ieke Moerdijk for having suggested the problem and for several suggestions concerning this work. I would also like to thank Dimitri Ara, Moritz Groth and Javier J. Gutiérrez for many helpful discussions. 

\section{Preliminaries}
Let $(\Mm,\otimes,I)$ be a symmetric monoidal category.

In this section we briefly recall the definitions of coloured operad enriched in $\Mm$ and of the category of algebras associated to it.

\subsection{Coloured operads}\label{col.op}

For every $n\in \N$ let $[n]\in \Set$ be the set of all positive natural numbers smaller or equal to $n$;

Given a set $C$, the set of signatures in $C$ is defined as 
\[
\Sqofc{C}=\{(c_1,\dots,c_n;c)\ | \ n\in \N \ \text{ and } c,c_1,\dots,c_n\in C \}=(\underset{n\in N}\coprod C^n)\times C\, .
\] 
For every element $s=(\bar{s},r)\in \Sqofc{C}$, the sequence $\bar{s}$ will be denoted $\leav{s}$ and $r$ will be indicated $\troot{s}$; there is a unique $n\in \N$ such that $\leav{s}\in C^n$, this number is called the \emph{valence} of  $s$ and denoted by $\card{s}$.

We will denote by $\RSqofc{C}$ the subset of $\Sqofc{C}$ formed by the signatures with valence different from $0$.

For every $s=(s_1,\dots,s_n; r)\in \Sqofc{C}$ a sequence of signatures $(t_1,\dots,t_n)$ is said to be \emph{composable} with $s$ if $\troot{t_i}=s_i$ for every $i\in [n]$; if this is the case we can produce the signature $s\circ (t_1,\dots,t_n)$. This new signature is such that $\troot{s\circ (t_1,\dots,t_n)}=r$, the valence is $\sum_{i\in [n]}\card{t_i}$ and $\leav{s\circ (t_1,\dots,t_n)}=(\leav{t_1},\dots,\leav{t_n})$.

Every map of sets $\morp{f}{C}{D}$ induces a map 
\[
\gfun{f}{\Sqofc{C}}{\Sqofc{D}}{(s_1,\dots,s_n;s)}{(f(s_1),\dots,f(s_n);f(s))}
\]

A symmetric $C$-coloured operad enriched in $\Mm$
is an object $\mathcal{O}$ in the product category $\Mm^{\Sqofc{C}}$ with the following additional data:
\begin{itemize}
\item
For every $s=(c_1,\dots,c_n; c) \in \Sqofc{C}$ and every sequence of signatures $(t_1,\dots,t_n)$ composable with $s$ a morphism
\[
\morp{\Gamma}{(\underset{i\in [n]}\bigotimes \mathcal{O}(t_i))\otimes \mathcal{O}(s)}{\mathcal{O}(s\circ(t_1,\dots,t_{n}))}
\]
called \emph{composition morphism}.
\item For every $s\in \Sqofc{C}$ and every $\sigma\in \Sigma_{\card{s}}$ a morphism
\begin{equation}\label{eq.perm}
\morp{\sigma^*}{\mathcal{O}(s)}{\mathcal{O}(s\sigma)}
\end{equation}
called \emph{symmetric morphism} (here $s\sigma$ stands for $(c_{\sigma(1)},\dots,c_{\sigma(n)};c)$).
\item For every $c\in C$ a morphism
\[
\morp{u_c}{I}{\mathcal{O}(c;c)}
\]
called the \emph{identity operation} of $c$.
\end{itemize}
These morphisms are required to satisfy some associativity, unitality and equivariance conditions.

If we drop the actions of the permutation groups (\ref{eq.perm})  (and hence the equivariance conditions) from the definition, we get non-symmetric coloured operads (sometimes called multi-categories).
If we replace $\Sqofc{C}$ by $\RSqofc{C}$, i.e. we do not allowed the operad to have operations with empty input, then we get the definition of reduced operad in the sense of \cite{BB13} (also called non-unitary operad in the literature).

A morphism of $C$-coloured operads is, informally speaking, a morphism in $\Mm^{\Sqofc{C}}$ respecting composition, identity operations and symmetric morphisms. The category of $C$-coloured operads enriched in $\Mm$ will be indicated by $\EOperfc{\Mm}{C}$. 
In the same way we can define $\ENSOperfc{\Mm}{C}$ and $\EROperfc{\Mm}{C}$, the categories of non-symmetric $C$ coloured operads and reduced $C$-coloured operads. 

For more details about the definitions of coloured operads and their morphisms see for example \cite{EM06} or \cite{BM05}.

It is also possible to define morphisms between operads with different sets of colours, we are now going to recall how.
For every map of sets $\morp{f}{C}{D}$ there is a ``forgetful'' functor
\begin{equation}\label{fstardef}
\morp{f^*}{\EOperfc{\Mm}{D}}{\EOperfc{\Mm}{C}}.
\end{equation}
For every $D$-coloured operad $\mathcal{O}$ and every $s\in\Sqofc{C}$ the $C$-coloured operad $f^*\mathcal{O}$ is defined on the $s$-component as $f^*\mathcal{O}(s)=\mathcal{O}(f(s))$.
Composition, identity operations and symmetries are defined in the obvious way using the ones in $\mathcal{O}$.

A morphism between a $C$-coloured operad $\mathcal{O}$ and a $D$-coloured operad $\mathcal{P}$ is defined as a couple $(f,\hat{f})$ where $f$ is a map from $C$ to $D$ and $\hat{f}$ is a morphism of $C$-coloured operads $\morp{\hat{f}}{\mathcal{O}}{f^*\mathcal{P}}$. Given an $E$-coloured operad $\mathcal{Q}$ and a morphism $(g,\hat{g})$ from $\mathcal{P}$ to $\mathcal{Q}$ we can define the composition $(g,\hat{g})\circ (f,\hat{f})$ as $(gf,\hat{f}f^*(\hat{g}))$.

In this way we can define the category $\EOper{\Mm}$ of coloured operads (without a fixed set of colours). Given a coloured operad $\mathcal{O}$ its set of colours will be denoted by $\Cl{\mathcal{O}}$.

Given two monoidal model category $\mathcal{C}$, $\mathcal{D}$ and a monoidal functor between them $\morp{F}{\mathcal{C}}{\mathcal{D}}$ it is easy to see that it passes to the categories of operads, i.e. there is a functor $\morp{F_{\sharp}}{\EOper{\mathcal{C}}}{\EOper{\mathcal{D}}}$ such that for every $\mathcal{O}\in\EOper{\mathcal{C}}$ one has $\Cl{F_{\sharp}(\mathcal{O})}=\Cl{\mathcal{O}}$ and $F_{\sharp}\mathcal{O}(s)=F(\mathcal{O}(s))$ for every $s\in \Sqofc{\Cl{\mathcal{O}}}$.

In particular if $(\Mm,\otimes,I)$ is a closed monoidal category there is always a strong monoidal functor
\begin{equation}
 \gfun{o^\Mm}{(\Set,\times,\ast)}{(\Mm,\otimes,I)}{S}{\underset{S}\coprod\, I}
\end{equation}
thus every $\Set$-operad $\mathcal{O}$ admits an incarnation $o^\Mm_\sharp(\mathcal{O})$ in $\Mm$.

\subsection{$\mathcal{O}$-Algebras}\label{sec.alg}
Assume that $\Mm$ is a bicomplete closed symmetric monoidal category. We briefly recall the definition of algebra for a coloured operad $\mathcal{O}$. Let $C=\Cl{\mathcal{O}}$, then an $\mathcal{O}$-algebra in $\Mm$ is an object of the product category $\Mm^{C}$ together with a morphism
\begin{equation}\label{eq.action}
 \morp{\alpha_s}{\mathcal{O}(s)\otimes A(c_1)\otimes\dots\otimes A(c_n)}{A(c)}
\end{equation}
for every $s=(c_1,\dots,c_n;c)\in \Sqofc{C}$. These \emph{action maps} have to satisfy some obvious conditions of associativity, unitality and equivariance (\emph{cf.} \cite{EM06}, \cite{BM05}). A morphism of $\mathcal{O}$-algebras is a morphism in $\Mm^{C}$ which respects the action maps.

Let $\Alg{\mathcal{O}}{\Mm}$ be the category of $\mathcal{O}$-algebra in $\Mm$. There is a (finitary) monadic adjunction:
\begin{equation}\label{adj.alg}
 \adjpair{F_{\mathcal{O}}}{U_{\mathcal{O}}}{\Mm^{C}}{\Alg{\mathcal{O}}{\Mm}}
\end{equation}
If $\mathcal{Q}$ is another operad with set of colours $D$ and $\morp{f}{\mathcal{O}}{\mathcal{Q}}$ is a morphism of operads then we have an adjunction:
\[
 \indadjpair{f}{\Alg{\mathcal{O}}{\Mm}}{\Alg{\mathcal{Q}}{\Mm}}
\]
and a commutative diagram of adjunctions:
\begin{equation}\label{eq.adj.op}
 \xymatrix{\Alg{\mathcal{O}}{\Mm} \ar@<1ex>[r]^{f_!} \ar@<1ex>[d]^{U_{\mathcal{O}}} & \Alg{\mathcal{Q}}{\Mm} \ar@<1ex>[l]^{f^*} \ar@<-1ex>[d]_{U_{\mathcal{Q}}}\\ 
                                \Mm^{C} \ar@<-1ex>[r]_{f_!} \ar@<1ex>[u]^{F_{\mathcal{O}}}& \Mm^{D} \ar@<-1ex>[l]_{f^{*}} \ar@<-1ex>[u]_{F_{\mathcal{Q}}}}
\end{equation}
where the adjunction on the lower row is given by the inverse image functor $f^*$ of the map $\morp{f}{C}{D}$ induced on the colours and its left adjoint.  

If $\mathcal{O}$ is a $\Set$-operad we will denote by $\Alg{\mathcal{O}}{\Mm}$ the category of algebras of $o^\Mm_\sharp(\mathcal{O})$.
 
\subsection{Admissible monads}
Consider $\Mm$, a cofibrantly generated category with set of generating (trivial) cofibrations $I$ (resp. $J$). Suppose that a finitary monad $T$ over $\Mm$ is given. This amount of data produces a ``free-forgetful'' adjunction
\[
 \adjpair{F_T}{U_T}{\Mm}{\Alg{T}{\Mm}}
\]
between $\Mm$ and the category of $T$-algebras. The category of $T$-algebras is complete and cocomplete and one can wonder if the model structure on $\Mm$ can be transferred along $F_T$, i.e. if there is a (necessarily unique)
cofibrantly generated model structure on $\Alg{T}{\Mm}$ for which a morphism of $T$-algebras $f$ is a weak equivalence (a fibration) if and only if $U_T(f)$ is a weak equivalence (resp. a fibration)
in $\Mm$ and whose set of generating (trivial) cofibrations is $F_T(I)$ (resp. $F_T(J)$). 
If such a model structure exists $(F_T,U_T)$ is a Quillen adjunction and $T$ is said to be \emph{admissible}.

We recall that a class of morphisms $S$ in a category $\Mm$ is \emph{saturated} if and only if it is closed under push-outs, retracts and transfinite composition. A class of morphisms $S$ in $\Mm$ which is closed under push-outs and transfinite composition is called \emph{weakly saturated}. Given a set
of morphisms $K$ in $\Mm$ a \emph{relative $K$-cell} is a morphism in the smallest weakly saturated class containing $K$.  

Let us fix a saturated class of morphisms $K$ in $\Mm$. 

\begin{defi}\label{Kadm}
A finitary monad $T$ on $\Mm$ is said to be \emph{$K$-admissible} if for every $\morp{i}{A}{B}$ in $I\cup J$
and every morphism of algebras $\morp{\alpha}{F_T(A)}{R}$, the push-out
\[
 \diagc{F_T(A)}{R}{F_T(B)}{R[i,\alpha]}{F_T(i)}{\alpha}{i_\alpha}{}{}
\]
 yields a morphism $i_\alpha$ such that:
\begin{enumerate}[i)]
 \item\label{item.adm1} $U_T(I_\alpha)$ belongs to $K$;
 \item\label{item.adm2} if $i\in J$ then $U_T(I_\alpha)$ is a weak equivalence. 
\end{enumerate}  
\end{defi}

We recall that a \emph{path object for a $T$-algebra $X$} is a factorization (in $\Alg{T}{\Mm}$) of the diagonal map
\[
 \xymatrix{X \ar[dr]_{p}\ar[rr]^{(\id,\id)} &    & X\times X \\
           & P(X)\ar[ur]_{q} &}
\]
such that $U_T(p)$ is a fibration and $U_T(q)$ is a weak equivalence.
A \emph{fibrant replacement} functor for $\Alg{T}{\Mm}$ is an endofunctor $\morp{R}{\Alg{T}{\Mm}}{\Alg{T}{\Mm}}$ together with a natural tranformation $\ntra{r}{\id}{R}$
such that $U_T(R(X))$ is fibrant for every $X\in \Alg{T}{\Mm}$ and $U_T r$ is a natural weak equivalence.
 
\begin{defi}
 A monad $T$ on $M$ \emph{has path objects} if the following two conditions are satisfied:
\begin{itemize}
 \item $\Alg{T}{\Mm}$ has a fibrant replacement functor;
 \item Every fibrant $T$-algebra admits a functorial path object.
\end{itemize}
\end{defi}

\begin{prop}\label{path.admis}
 If a finitary monad $T$ on $\Mm$ has path objects then all relative $F_T(J)$-cells are weak equivalences.
 In particular a finitary monad with path objects
satisfying condition \ref{item.adm1}) of Definition \ref{Kadm} is $K$-admissible. 
\end{prop}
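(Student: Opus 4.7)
The plan is to run a version of Quillen's path object argument: first, I would pin down a strong left lifting property for relative $F_T(J)$-cells; then, using the fibrant replacement and path object supplied by the hypothesis, I would produce a zig-zag of morphisms over which the 2-out-of-6 property closes the argument.

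First I would observe that every relative $F_T(J)$-cell $\morp{f}{X}{Y}$ has the left lifting property in $\Alg{T}{\Mm}$ against every morphism of $T$-algebras $\morp{\phi}{A}{B}$ for which $U_T(\phi)$ is a fibration in $\Mm$. On generators $F_T(j)$ this is just the adjunction $F_T \dashv U_T$ together with the fact that $j \in J$ is a trivial cofibration; the class of maps enjoying a given LLP is weakly saturated, so the property propagates to all relative $F_T(J)$-cells.

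Fixing such a cell $\morp{f}{X}{Y}$, I would invoke the fibrant replacement functor to produce $\morp{r_X}{X}{RX}$ and $\morp{r_Y}{Y}{RY}$ with $U_T(r_X), U_T(r_Y)$ weak equivalences and $U_T(RX), U_T(RY)$ fibrant, together with the functorial comparison $\morp{Rf}{RX}{RY}$. Applying the LLP above to $RX \to *$ supplies a map $\morp{\tilde{r}}{Y}{RX}$ with $\tilde{r}\circ f = r_X$. The two maps $Rf\circ\tilde{r}$ and $r_Y$ from $Y$ to $RY$ then coincide after precomposition with $f$ (using naturality of $r$), so they fit into a commutative square against the path-object fibration $\morp{(q_0,q_1)}{P(RY)}{RY\times RY}$; a further application of the LLP delivers a lift $Y\to P(RY)$ exhibiting $Rf\circ\tilde{r} \simeq r_Y$ as right-homotopic via the underlying path object of $U_T(RY)$ in $\Mm$.

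Passing to $\HoC{\Mm}$, both composites $U_T(\tilde{r})\circ U_T(f) = U_T(r_X)$ and $U_T(Rf)\circ U_T(\tilde{r}) = U_T(r_Y)$ become isomorphisms, and the two-out-of-six property of weak equivalences forces $U_T(f)$, $U_T(\tilde{r})$ and $U_T(Rf)$ each to be a weak equivalence in $\Mm$, proving the first assertion. The ``in particular'' statement is then immediate: for $i\in J$, the pushout $i_\alpha$ of Definition \ref{Kadm} is a relative $F_T(J)$-cell of length one, so condition \ref{item.adm2}) follows from the first assertion, while condition \ref{item.adm1}) holds by hypothesis. The main obstacle is correctly setting up the path-object square and verifying that its two projections onto $RY\times RY$ agree on $f$; once that is in place, the 2-out-of-6 step is formal.
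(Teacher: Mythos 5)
Your proof is correct and is essentially the same argument the paper relies on: the paper gives no in-text proof, deferring to \cite[Section 2.6]{BM03}, which is precisely this Quillen path-object scheme (LLP of relative $F_T(J)$-cells against underlying fibrations, a lift $\tilde{r}$ against $RX\to \ast$, a lift into the path object of $RY$, then two-out-of-six/saturation). The one cosmetic point is that $U_T(Rf)\circ U_T(\tilde{r})=U_T(r_Y)$ holds only up to the right homotopy you constructed, i.e.\ as an equality in $\HoC{\Mm}$ (or after noting $U_T(q_0\circ H)$ is a weak equivalence by 2-out-of-3), which is exactly what your final step needs.
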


For a proof of this proposition we refer the reader to \cite[Section 2.6]{BM03} and the references there.

\begin{defi}
Let $K$ be a class of morphisms in $\Mm$, a cofibrantly generated model category. $\Mm$ is said to be \emph{$K$-compactly generated} if:
\begin{itemize}
 \item every object is small respect to $K$;
 \item The class of weak equivalences is $K$-perfect (i.e. weak equivalences are closed under filtered colimits along morphisms of $K$).
\end{itemize}
If $K$ is the whole class of morphisms of $\Mm$ we say that $\Mm$ is \emph{strongly compactly generated}. 
\end{defi}

Examples of strongly cofibrantly generated category are: the category of Simplicial sets with the Kan-Quillen model structure, the category of Chain Complexes over a commutative ring with the projective model structure, the category of Simplicial Modules over a commutative ring.

The category of compactly generated (weak Hausdorff) Topogical Spaces is $\mathcal{T}_1$-compactly generated (\emph{cf.} \cite{BM12}), where $\mathcal{T}_1$ is the class of $T_1$ closed inclusions (see Appendix \ref{App.C} for the definition).  

\begin{prop}{\cite[Theorem 2.11]{BB13}}\label{admis.mon}
 Every finitary $K$-admissible monad on a $K$-compactly generated category $\Mm$ is admissible.
\end{prop}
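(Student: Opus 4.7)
The plan is to invoke the standard transfer criterion (due to Quillen, and refined by Crans/Kan) for creating a cofibrantly generated model structure along the adjunction $F_T \dashv U_T$. By that criterion, admissibility of $T$ reduces to two verifications in $\Alg{T}{\Mm}$: (a) the small object argument runs with respect to the putative generating (trivial) cofibrations $F_T(I)$ and $F_T(J)$; and (b) every relative $F_T(J)$-cell is sent by $U_T$ to a weak equivalence in $\Mm$.

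For (a), I would first use that $T$ is finitary, so $U_T$ preserves filtered colimits and in particular the transfinite compositions built in cell constructions. Condition \ref{item.adm1}) of Definition \ref{Kadm} says that for every $i\in I\cup J$ and every attaching map $\alpha$, the resulting pushout $i_\alpha$ is sent by $U_T$ to a morphism in $K$. Since $K$ is saturated, and in particular closed under transfinite composition and retracts, every relative $F_T(I\cup J)$-cell is thereby sent by $U_T$ to a morphism in $K$. The domain of $F_T(i)$ is $F_T(A)$ for some domain $A$ of $I\cup J$, and $A$ is small with respect to $K$ by $K$-compact generation of $\Mm$. Combining the adjunction $F_T\dashv U_T$ with the preservation of the relevant filtered colimits by $U_T$ translates this into smallness of $F_T(A)$ in $\Alg{T}{\Mm}$ with respect to the class of relative $F_T(I\cup J)$-cells, which is exactly what the small object argument requires.

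For (b), I would appeal directly to Proposition \ref{path.admis}, or unpack its core argument: each single pushout of an $F_T(j)$ with $j\in J$ is sent by $U_T$ to a morphism that lies in $K$ by \ref{item.adm1}) and is a weak equivalence by \ref{item.adm2}). A relative $F_T(J)$-cell is a (retract of a) transfinite composition of such pushouts, so after applying $U_T$ it becomes a (retract of a) transfinite composition of weak equivalences along maps in $K$. By the $K$-perfection of weak equivalences built into $K$-compact generation, such a composition is again a weak equivalence; closure of weak equivalences under retracts completes the argument.

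The main obstacle is the smallness step in (a): one has to propagate the one-step information \ref{item.adm1}) along arbitrary transfinite compositions by exploiting the saturation of $K$, and then convert smallness of $A\in\Mm$ with respect to $K$ into smallness of $F_T(A)\in\Alg{T}{\Mm}$ with respect to $F_T(I\cup J)$-cells. This bridge rests crucially on finitariness of $T$, which ensures that $U_T$ commutes with the long colimits appearing in the hom-set reformulation of smallness. Once this is established, (b) follows almost formally from $K$-perfection, and the transfer criterion yields the desired admissibility.
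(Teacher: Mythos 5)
The paper offers no proof of this proposition at all: it is quoted directly as \cite[Theorem 2.11]{BB13}, and your reconstruction is essentially the argument given there --- finitariness of $T$ makes $U_T$ preserve the filtered colimits (in particular the transfinite compositions) occurring in cell constructions, saturation of $K$ propagates condition i) of Definition \ref{Kadm} from single pushouts to all relative $F_T(I\cup J)$-cells, the adjunction then converts $K$-smallness of the domains in $\Mm$ into smallness of the objects $F_T(A)$ in $\Alg{T}{\Mm}$, and condition ii) together with $K$-perfection of the weak equivalences shows that relative $F_T(J)$-cells become weak equivalences, so the Kan--Crans transfer criterion applies. One correction to step (b): your first-choice route of ``appealing directly to Proposition \ref{path.admis}'' is not available, since that proposition presupposes that $T$ has path objects, which is not among the present hypotheses; your fallback argument, using condition \ref{item.adm2}) for each single pushout and then $K$-perfection (via a transfinite induction comparing each stage with the initial object of the sequence) for the composition, is the correct and sufficient one. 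A cosmetic remark: a relative $F_T(J)$-cell is by definition a transfinite composition of pushouts, with no retracts, so your parenthetical retract case is superfluous --- though harmless, as weak equivalences are closed under retracts in any case.
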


\subsection{Admissible operads}\label{sec.adm.op}
Suppose that $\Mm$ is a cofibrantly generated monoidal model category with set of generating (trivial) cofibrations $I$ (resp. $J$). For every set $C$ the product category $\Mm^{C}$ inherits a model structure in which the weak equivalences (cofibrations, fibrations) are the level-wise ones (\emph{cf.} \cite{Ho99}).

For every $c\in C$ let $\morp{p_c}{\Mm^{C}}{\Mm}$ be the projection on the $c$-component. The functor $p_c$ has a left adjoint $\iota_c$ that is defined on the objects in the following way: for every $A\in \Mm$ and $t\in C$
\[
 \iota_c(A)(t)=
 \begin{cases}
  A & \text{if } c=t\\
 \emptyset & \text{otherwise}
 \end{cases}
\]
where $\emptyset$ is the initial object of $\Mm$.

The model structure on $\Mm^{C}$ is cofibrantly generated and it has as set of generating cofibrations (resp. trivial cofibrations) $I_C=\{\iota_c(i) |\text{ for every } i\in I, c\in C \}$ 
(resp. $J_C=\{\iota_c(j) |\text{ for every } j\in J, c\in C \}$).

A coloured $\Mm$-enriched operad $\mathcal{O}$ with set of colours $C$ is \emph{admissible} if the corresponding monad ( the one associated to adjunction (\ref{eq.adj.op})) is admissible, i.e. if the model structure on $\Mm^{C}$ can be transferred to the category $\Alg{\mathcal{O}}{\Mm}$ through the (monadic) adjunction (\ref{eq.adj.op}), i.e. if $\Alg{\mathcal{O}}{\Mm}$ can be endowed with a model structure in which a map $f$ is a weak equivalence (fibration) if and only if $U_{\mathcal{O}}(f)$ is a weak equivalence (fibration) in $\Mm^{C}$. Note that if such a model structure is admissible then it is unique and it is cofibrantly generated: the set of generating (trivial) cofibrations is $F_\mathcal{O}(I_C)$ (resp. $F_\mathcal{O}(J_C)$). The transferred model structure on $\Alg{\mathcal{O}}{\Mm}$ is also called the \emph{transferred model structure}.

\begin{defi}\label{d.locK}
Fix a set $C$, a category $\Mm$ and a class of morphisms $K$ in $\Mm$. We will say that a morphism $\morp{f}{A}{B}$ in $\Mm^{C}$ is a \emph{local $K$-morphism} if for every $c\in C$ the morphism $\morp{f(c)}{A(c)}{B(c)}$ belongs to $K$.
\end{defi}

Suppose that $K$ is a saturated class of maps in $\Mm$, let $K'$ be the class of local $K$-morphisms in $\Mm^{C}$; an operad $\mathcal{O}$ as above is \emph{$K$-admissible} in $\Mm$ if the corresponding monad is $K'$-admissible. 

\begin{rmk}
 If $\Mm$ is $K$-compactly generated model category, then $\Mm^{C}$ ( with the induced model structure described above) 
is $K'$-compactly generated.   
\end{rmk}

As a particular case of Proposition \ref{admis.mon} we have: 

\begin{prop}\label{admis.operad}
 Every $K$-admissible operad on a $K$-compactly generated category $\Mm$ is admissible.
\end{prop}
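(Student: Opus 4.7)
The plan is to deduce this as a direct corollary of Proposition \ref{admis.mon} applied to the monad on $\Mm^{C}$ associated to the operad $\mathcal{O}$, where $C=\Cl{\mathcal{O}}$. The setup of Section \ref{sec.adm.op} already tells us that the adjunction (\ref{eq.adj.op}) is monadic and the monad $T_\mathcal{O}=U_\mathcal{O}F_\mathcal{O}$ is finitary (this is the adjunction (\ref{adj.alg}) applied in $\Mm^C$), and that admissibility of the operad is by definition admissibility of this monad. So the task reduces to checking that the hypotheses of Proposition \ref{admis.mon} are met by $T_\mathcal{O}$ on the base category $\Mm^{C}$ with the class $K'$ of local $K$-morphisms.

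First I would observe that the $K$-admissibility of $\mathcal{O}$ as defined in Section \ref{sec.adm.op} is literally the $K'$-admissibility of the monad $T_\mathcal{O}$: the generating (trivial) cofibrations of $\Mm^{C}$ are $I_C$ and $J_C$, and for every $i\in I_C\cup J_C$ and every algebra map $\alpha\colon F_\mathcal{O}(A)\to R$, the pushout condition (i) and (ii) of Definition \ref{Kadm} are exactly the conditions packaged into $K$-admissibility of $\mathcal{O}$ once one interprets membership in $K'$ as local membership in $K$.

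Next I would invoke the remark immediately preceding the proposition: if $\Mm$ is $K$-compactly generated, then $\Mm^{C}$ is $K'$-compactly generated, i.e.\ every object is small with respect to $K'$ and the weak equivalences of $\Mm^{C}$ (which are the level-wise weak equivalences, hence stable under filtered colimits along $K'$-morphisms level-wise) form a $K'$-perfect class. This verifies that Proposition \ref{admis.mon} is applicable with base $\Mm^{C}$ and class $K'$.

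Finally I would apply Proposition \ref{admis.mon} to conclude that $T_\mathcal{O}$ is an admissible monad on $\Mm^{C}$, which by definition means that $\mathcal{O}$ is an admissible operad. The main obstacle, if one can call it that, is simply a careful bookkeeping between the three levels of terminology (operad, monad on $\Mm^{C}$, and the classes $K$ versus $K'$); once the definitions are unfolded the statement is a direct translation of Proposition \ref{admis.mon}.
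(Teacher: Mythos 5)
Your proposal is correct and matches the paper's argument exactly: the paper states this proposition as "a particular case of Proposition \ref{admis.mon}", relying on the definition that $K$-admissibility of $\mathcal{O}$ means $K'$-admissibility of the associated monad on $\Mm^{C}$ and on the remark that $\Mm^{C}$ is $K'$-compactly generated whenever $\Mm$ is $K$-compactly generated. Your unfolding of the bookkeeping between $K$ and $K'$ is precisely the intended reading.
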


The admissibility of operads is investigated in \cite{BM03}, \cite{BM05}, \cite{Sp01}, \cite{Ha10} , \cite{Mu11} (in the non-symmetric case). 
In \cite{BB13} the question of when a (finitary) polynomial monad is admissible is addressed. 
We recall that one can associate to every coloured operad $\mathcal{O}$  a monad whose category of algebras is the same as the one of $\mathcal{O}$ (that is the monad associated to the adjunction (\ref{eq.adj.op})).
Not all the monads arise from operads but this is the case for finitary polynomial monads (in $\Set$).  In fact there is
an equivalence of categories between the category of finitary polynomial monads in $\Set$ and the category of rigid coloured operads in $\Set$ (\emph{cf.} \cite{Ko11}, \cite{SZ12} for definitions and proofs). A \emph{rigid operad} is just an operad in which the action maps (\ref{eq.action}) act in a free way.
 If $\mathcal{O}_T$ is the $\Set$-operad associated to a certain polynomial monad $T$ the latter is admissible in a monoidal model category $\Mm$ if and only if $o^{\Mm}_\sharp(\mathcal{O}_T)$ is admissible. 

We now give some definitions in order to state some results of interested for us.

In a monoidal category, a saturated class of morphisms which is also closed under tensor products with arbitrary objects is called \emph{monoidally saturated}; the \emph{monoidal saturation} of a class of morphisms $K$ is the minimal monoidally saturated class of morphisms containing $K$.

In $\Mm$ the morphisms belonging to the monoidal saturation of the class of cofibrations $I^{\otimes}$ will be called \emph{$\otimes$-cofibrations}. An object in $\Mm$ is \emph{$\otimes$-small} if it is small respect to the class of $\otimes$-cofibrations. 
The class of weak equivalences is called \emph{$\otimes$-perfect} if it is closed under filtered colimits along $\otimes$-cofibrations. 

A coloured $\Mm$-enriched operad which is $I^{\otimes}$-admissible is said \emph{$\otimes$-admissible}. 
\begin{defi}
A cofibrantly generated monoidal model category $\Mm$ is \emph{compactly generated} if the class of weak equivalences is $\otimes$-perfect and each object is $\otimes$-small (i.e. if it is $I^{\otimes}$-compactly generated).
\end{defi}

We recall that every combinatorial monoidal model category whose class of weak equivalence is closed under filtered colimits is an example of compactly generated monoidal model category. Another example of compactly generated monoidal model category (which is not combinatorial) is given by the category of compactly generated (weak Hausdorff) topological spaces with the standard model structure and the cartesian product (\emph{cf.} \cite[1.2]{BM12}).


We have the following result:

\begin{prop}{(cf. \cite[Theorem 2.1]{BM05})}\label{BMstone}
Suppose $\Mm$ is a cofibrantly generated monoidal model category with cofibrant unit. 
If it admits a monoidal fibrant replacement functor and contains a cocommutative comonoidal interval object, then every symmetric coloured $\Mm$-operad $\mathcal{O}$ has path objects. 

In particular if $\mathcal{O}$ is $K$-admissible and the domains of the generating (trivial) cofibrations are small relative to $K$ for some saturated class of morphisms $K$ in $\Mm$,
then $\mathcal{O}$ is admissible. 
\end{prop}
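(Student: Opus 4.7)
The plan is to establish the main assertion---that every symmetric coloured operad $\mathcal{O}$ has path objects---by constructing separately a fibrant replacement functor on $\Alg{\mathcal{O}}{\Mm}$ and a functorial path object for each fibrant $\mathcal{O}$-algebra. The ``In particular'' clause then follows from Proposition~\ref{path.admis} combined with a small object argument permitted by the smallness hypothesis.

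For the fibrant replacement, write $C=\Cl{\mathcal{O}}$ and let $R\colon\Mm\to\Mm$ be the monoidal fibrant replacement functor with unit $\eta\colon\id\Rightarrow R$. Given $A\in\Alg{\mathcal{O}}{\Mm}$, apply $R$ componentwise to obtain $R(A)\in\Mm^{C}$, then use the lax monoidal constraints of $R$, iterated and composed with $\eta_{\mathcal{O}(s)}$, to equip $R(A)$ with an $\mathcal{O}$-algebra structure
\[
\mathcal{O}(s)\otimes R(A(c_1))\otimes\cdots\otimes R(A(c_n))\longrightarrow R(\mathcal{O}(s)\otimes A(c_1)\otimes\cdots\otimes A(c_n))\xrightarrow{R(\alpha_s)} R(A(c)).
\]
Monoidal coherence of $R$ provides compatibility with operadic composition, units and symmetries, and $\eta$ becomes a natural weak equivalence of $\mathcal{O}$-algebras into a fibrant target. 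For the path object, let $H$ be the cocommutative comonoidal interval, with factorization $I\sqcup I\xrightarrow{(e_0,e_1)} H\xrightarrow{p} I$ of the fold map (with $(e_0,e_1)$ a cofibration and $p$ a weak equivalence), counit $p$, and cocommutative coassociative comultiplication $\Delta$ for which $e_0,e_1$ are grouplike. For a fibrant $A$ set $P(A)(c)=[H,A(c)]$. The iterated comultiplication $\Delta^{(n-1)}\colon H\to H^{\otimes n}$, $\Sigma_n$-equivariant by cocommutativity, together with the closed structure of $\Mm$, induces by adjunction an action map
\[
\mathcal{O}(s)\otimes [H,A(c_1)]\otimes\cdots\otimes [H,A(c_n)]\longrightarrow [H,\mathcal{O}(s)\otimes A(c_1)\otimes\cdots\otimes A(c_n)]\xrightarrow{[H,\alpha_s]} [H,A(c)],
\]
making $P(A)$ an $\mathcal{O}$-algebra; the counit $p$ and the endpoints $(e_0,e_1)$, being comonoid maps, induce $\mathcal{O}$-algebra morphisms $A\to P(A)\to A\times A$ factoring the diagonal. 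The first is a weak equivalence because $p$ is a weak equivalence between cofibrant objects (using cofibrancy of $I$) and $A$ is fibrant; the second is a fibration by SM7 applied to the cofibration $(e_0,e_1)$ and the fibrant $A$.

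For the ``In particular'' clause, Proposition~\ref{path.admis} converts path objects into the statement that every relative $F_{\mathcal{O}}(J_C)$-cell is a weak equivalence, providing item (ii) of Definition~\ref{Kadm} for free, while $K$-admissibility supplies item (i). Finally, the smallness of the domains of $I$ and $J$ relative to $K$ lifts through the adjunction to smallness of the generators $F_{\mathcal{O}}(I_C)$ and $F_{\mathcal{O}}(J_C)$ relative to the relative cells produced by the small object argument---whose underlying maps lie in the local $K$-morphisms by item (i)---so that the transferred model structure exists. The main obstacle is the $\Sigma_n$-equivariance of the action map on $P(A)$: one must verify that permuting the factors $[H,A(c_i)]$ commutes with $\Delta^{(n-1)}$ before adjoining and postcomposing with $[H,\alpha_s]$, and this is precisely the step that breaks without cocommutativity of $\Delta$.
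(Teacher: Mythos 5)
Your proof is correct and takes essentially the same route as the paper's: the paper gives no argument of its own here but defers to \cite[Theorem 2.1]{BM05}, whose proof is precisely your construction---componentwise application of the (symmetric) monoidal fibrant replacement functor, and the convolution path object $P(A)(c)=[H,A(c)]$ built from the cocommutative comonoidal interval, with cocommutativity of $\Delta$ being exactly what secures $\Sigma_n$-equivariance, and with SM7 plus cofibrancy of the unit giving the fibration/weak-equivalence properties of the factorization. Your treatment of the ``in particular'' clause via Proposition~\ref{path.admis} together with $K$-relative smallness feeding the small object argument is likewise the paper's intended mild generalization of the strongly cofibrantly generated case of \cite{BM05}.
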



We refer the reader to \emph{loc. cit.} and \cite{BM03} for the definition of interval object. 

We give the following definition for convenience:
\begin{defi}
A cofibrantly generated model category is \emph{strongly cofibrantly generated} if the domains of the generating cofibrations and trivial cofibrations are small (with respect to the whole category).
\end{defi}
In its original version Theorem 2.1, \cite{BM05} states that in a strongly cofibrantly generated category with a monoidal fibrant replacement functor and a cocommutative comonoidal interval object every operad is admissible. 

Examples of cofibrantly generated monoidal model categories with cofibrant unit admitting a monoidal fibrant replacement functor and a cocommutative comonoidal interval object are:
\begin{itemize}
 \item Simplicial Sets with the Kan-Quillen model structure;
 \item Chain Complexes over a field of characteristic zero with the projective model structure;
 \item Simplicial Modules over any ring, with the model structure transferred form Simplicial Sets;
 \item Compactly generated (weak Hausdorff) topological spaces with the Quillen model structure.
\end{itemize}
 The first three examples are strongly cofibrantly generated (actually, they are all strongly compactly generated), hence every coloured symmetric operad is admissible.
 In the case of (weak Hausdorff) compactly generated topological space the objects are small only respect to $\mathcal{T}_1$, the class of $T_1$ closed inclusions (see Appendix \ref{App.C} for the definition); as a consequence $\mathcal{T}_1$-admissible operads are admissible.

We recall also the monoid axiom, which was first introduced in \cite{SS00} by Schwede-Shipley, in order  to study the admissibility of the associative operad (the operad whose algebras are monoids):
\begin{defi}
A cofibrantly generated monoidal model category $\Mm$ satisfies the \emph{monoid axiom} if any morphism in the monoidal saturated class of trivial cofibrations is a weak equivalence.
\end{defi}

\begin{prop}{\cite[Corollary 8.10]{BB13}}\label{compstone.alg}
Suppose $\Mm$ is a compactly generated monoidal model category and satisfies the monoid axiom, then every tame (finitary) polynomial monad (in $\Set$) is $\otimes$-admissible in $\Mm$ (and hence admissible by Proposition \ref{admis.operad}).\
\end{prop}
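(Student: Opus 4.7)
The plan is to verify the two conditions of Definition \ref{Kadm} with $K=I^{\otimes}$, the class of $\otimes$-cofibrations, which by Proposition \ref{admis.operad} will yield admissibility. Fix a map $i\colon A \to B$ in $I_C \cup J_C$ and a morphism of algebras $\alpha\colon F_T(A) \to R$, and form the pushout $R[i,\alpha]$; write $i_{\alpha}\colon R \to R[i,\alpha]$ for the resulting morphism. The goal is to show that $U_T(i_\alpha)$ is a $\otimes$-cofibration, and moreover a weak equivalence when $i \in J_C$.

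The central technical tool is the pushout filtration for polynomial monads developed in \cite{BB13}. Namely, $U_T(i_\alpha)$ is exhibited as a transfinite composition of maps $R_{n-1} \to R_n$, where each stage fits into a pushout square in $\Mm^C$ whose attaching map is built from the $n$-fold pushout-product $i^{\square n}$ tensored with an object $H_n$ indexed by the arity-$n$ operations of $T$ acting via the symmetric group $\Sigma_n$, and where the pushout is taken after quotienting by the diagonal $\Sigma_n$-action. This filtration is exactly the colored-operad/polynomial-monad generalization of the usual filtration of free algebra extensions.

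First I would invoke tameness: by definition of a tame polynomial monad the $\Sigma_n$-action on the indexing objects $H_n$ is free in a suitable sense, so that the quotient $(i^{\square n} \otimes H_n)/\Sigma_n$ preserves both $\otimes$-cofibrations and the monoidal saturation of the trivial cofibrations. For condition (\ref{item.adm1}), pushout-products of generating cofibrations lie in the monoidal saturation of $I$, so each stage $R_{n-1} \to R_n$ is a pushout of a $\otimes$-cofibration, hence a $\otimes$-cofibration; the transfinite composition $U_T(i_\alpha)$ remains in this weakly saturated class. For condition (\ref{item.adm2}), when $i \in J_C$ the iterated pushout-product lies in the monoidal saturation of the trivial cofibrations, and the monoid axiom applied to the resulting pushout guarantees that each $R_{n-1} \to R_n$ is a weak equivalence. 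Compact generation, which gives that weak equivalences are closed under transfinite composition along $\otimes$-cofibrations, then implies that $U_T(i_\alpha)$ itself is a weak equivalence.

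The main obstacle is the first step, namely the construction of the pushout filtration and the verification that tameness translates into freeness of the relevant symmetric group actions. This is precisely the content of the combinatorial and categorical analysis carried out in \cite{BB13}; once the filtration is in place, the argument reduces, as sketched above, to a direct application of the monoid axiom and of the compactness hypothesis on $\Mm$, so there is no extra work beyond citing the relevant results of \emph{loc.~cit.}
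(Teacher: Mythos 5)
The paper gives no proof of this proposition at all: it is quoted verbatim as \cite[Corollary 8.10]{BB13}, so your proposal must be measured against Batanin--Berger's argument and against the paper's own remarks on tameness at the end of Appendix \ref{App.C}. Your skeleton is the right one (verify Definition \ref{Kadm} with $K=I^{\otimes}$, filter $U_T(i_\alpha)$ as a transfinite composition of pushouts built from pushout-products of $i$, apply the monoid axiom in the trivial-cofibration case, and conclude admissibility via Proposition \ref{admis.operad}), but your account of the crucial step --- what tameness buys --- is wrong. Tameness of a polynomial monad is \emph{not} a freeness condition on symmetric group actions. By definition (as the paper itself recalls: ``a polynomial monad is tame if $\mathbf{T}^{T+1}$ has a final object in every connected component''), tameness means that the colimits over the classifier $\mathbf{T}^{T+1}$ which compute the stages of the filtration reduce to evaluations at final objects, so that \emph{no quotients by group actions occur at all}: each stage is an honest pushout of a pushout-product map tensored with an object, to which the monoidal saturation of $I$ (resp.\ of $J$, via the monoid axiom) applies directly.

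This is not cosmetic: if the filtration genuinely involved quotients $(i^{\square n}\otimes H_n)/\Sigma_n$, even by free actions, the monoid axiom would not let you conclude, because monoidally saturated classes are not in general closed under quotients by finite groups. That is exactly the situation of the \emph{non-tame} monad $\Opop{C}$ treated in Appendix \ref{App.C}: there each connected component of the relevant classifier is equivalent to a finite group rather than admitting a final object, quotients really appear (the map $u_n$ decomposes as $\coprod_{G} u_{n,T_G}/G$), and the paper can only conclude in $\Top$ for the special class $\mathcal{T}_1$ via Lemma \ref{T1.G} --- not from the monoid axiom in a general compactly generated $\Mm$. As written, your argument would ``prove'' $\otimes$-admissibility of $\Opop{C}$ under the monoid-axiom hypotheses, which is precisely what tameness is there to exclude; the fix is to replace the free-action/quotient step by the final-object reduction that is the actual content of tameness in \cite{BB13}. (A minor further point: compact generation is not needed to show the stages compose to a weak equivalence --- the monoidal saturation is already closed under transfinite composition --- but rather for the smallness and perfectness hypotheses feeding Proposition \ref{admis.operad}.)
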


We will not give the definition of tame polynomial monad; we will content ourselves with few comments. Let us call an operad in $\Set$ \emph{tame rigid} if its associated monad is tame polynomial. All tame polynomial monads arising from tame rigid operads and all non-symmetric operads are tame rigid. 

We remark that proposition \ref{compstone.alg} was proved for non-symmetric one-coloured operads by Muro in \cite{Mu11}.

Model categories satisfying the hypotheses of Proposition \ref{compstone.alg} are, among others, simplicial sets with Quillen's or Joyal's model structure, compactly generated spaces with Str\o m's or Quillen's model structure, chain complexes over a commutative ring with the projective model structure (\emph{cf.} \cite[Example 1.12, Proposition 2.5]{BB13}).

\section{Bifibrations}
Let us fix a bicomplete symmetric monoidal category $\Mm$.\\ 
The functor $\morp{\Obf}{\ECat{\Mm}}{\Set}$  associates to each $\Mm$-enriched category its set of objects. Similarly the functor $\morp{\Clf}{\EOper{\Mm}}{\Set}$ sends every $\Mm$-enriched coloured operads to its set of colours; we have a similar functor $\Clf$ for non-symmetric coloured operads and reduced coloured operads as well.

The functors $\Obf$ and $\Clf$ are bifibrations (i.e. functors which are both fibrations and opfibrations), giving to $\ECat{\Mm}$, $\EOper{\Mm}$, $\ENSOper{\Mm}$ and $\EROper{\Mm}$ the structure of bifibred categories over $\Set$. We refer the reader to \cite{Ro94} and \cite{Sta12} for the definition of bifibration, in these two papers the presentation of $\ECat{\Mm}$ is also given (note, the second article fixes an error of the first).
Classical references for fibred category are \cite[expos. VI]{SGA1} and \cite[\S 8]{Bo08}.

From the theory of (bi)fibrations we know that there is a $2$-equivalence between the  $2$-category of bifibrations over $\Set$ and the $2$-category of pseudo-functors from $\Set$ to $\Catadj$ (the 2-category which has small categories as objects, adjunctions as 1-morphisms and natural isomorphisms of right adjoint functors as 2-morphisms). Hence we can associate to each bifibration $\morp{\pi}{\mathcal{C}}{\Set}$ a pseudo-functor $\morp{\Fibf{\pi}}{\Set}{\Catadj}$, that determines $\pi$ completely (up to equivalence). 
We recall here how this is done. Fix a bifibration $\morp{\pi}{\mathcal{C}}{\Set}$.
For every $C\in \Set$ let $\Fib{\pi}{C}$ be the \emph{fiber of $\pi$ over $C$}, i.e. the minimal subcategory of $\mathcal{C}$ containing every object $X$ such that $\pi(X)=C$ and every morphism $f$ such that $\pi(f)=\id_C$.

 A \emph{cleavage} for $\pi$ is a choice, for every map of sets $\morp{f}{C}{D}$ and every $A\in \Fib{\pi}{D}$, of an object $f^*(A)\in \Fib{\pi}{C}$ and a cartesian morphism $\morp{\phi_f}{f^*(A)}{A}$ such that $\pi{\phi_f}=f$. 

A cleavege defines a unique functor $\morp{f^*}{\Fib{\pi}{D}}{\Fib{\pi}{C}}$ for every map $\morp{f}{C}{D}$, called the \emph{inverse image functor} of $f$.
  
Dually, a \emph{cocleavage} is a choice, for every map of sets $\morp{f}{C}{D}$ and every $B\in \Fib{\pi}{C}$, of an object $f_!(B)\in \Fib{\pi}{D}$ and a cocartesian morphism $\morp{\upsilon_f}{B}{f_!(B)}$ such that $\pi{\upsilon_f}=f$. A cocleavege defines a unique functor $\morp{f_!}{\Fib{\pi}{C}}{\Fib{\pi}{D}}$ for every map $\morp{f}{C}{D}$, called the \emph{direct image functor} of $f$. 

It follows that for every morphism $\morp{F}{X}{Y}$ in $\mathcal{C}$ there is a unique morphism $\morp{\trf{F}}{C}{Z}$ in $\Fib{\pi}{\pi(X)}$ such that $F=\phi_{\pi{F}}\circ \trf{F}$. Dually there is a unique morphism $\morp{\trc{F}}{W}{Y}$ in $\Fib{\pi}{\pi(Y)}$ such that $F=\trf{F} \circ \upsilon_{\pi{F}}$.

It can easily be proved that the inverse image functor is right adjoint to the direct image functor for every $f$. Equivalently a choice of a cleavage and a left adjoint for every inverse image functor provides a cocleavage. We recall that every bifibration admits a cleavage and a cocleavage.

We can now define the pseudo-functor $\morp{\Fibf{\phi}}{\Set}{\Catadj}$ associated to $\pi$: it sends every set $C$ to the fiber category $\Fib{\pi}{C}$ and every map of set $f$ in the adjunction ($f_!,f^*)$.

We now return to the bifibrations $\Obf$ and $\Clf$. Given $C\in \Set$, the fiber of $\Obf$ over $C$ is $\ECatfc{\Mm}{C}$, the category of $\Mm$-categories with set of objects $C$ and functors between them which are the identity on $C$.

For every map of sets $\morp{f}{C}{D}$ and every $A\in \ECatfc{\Mm}{D}$ we can define the inverse image $f^*(A)\in \ECatfc{\Mm}{C}$ in the following way: for every $x,y\in C$, we set $f^*(A)(x,y)=A(f(x),f(y))$ and the composition is defined using the one of $A$. The unique map $\morp{\phi_f}{f^*(A)}{A}$, such that $\Obf(\phi_f)=f$ and $\phi_f(x,y)=\id_{A(f(x),f(y))}$ is cartesian. The collection of the $\phi_f$'s for every map of sets $f$ and every $A\in \EOperfc{\Mm}{D}$ (where $D$ is the target of $f$) provides a cleavage for $\Obf$.

In a similar way the fiber of $\Clf$ over $C$ coincides with $\EOperfc{\Mm}{C}$ as defined in \ref{col.op}. We can define a cleavage in such a way that the inverse image functor for a map $\morp{f}{C}{D}$ is defined as (\ref{fstardef}).
The same can be done when the domain of $\Clf$ is $\ENSOper{\Mm}$ or $\EROper{\Mm}$, in these cases we denote the fiber over $C$ by $\ENSOperfc{\Mm}{C}$ and $\EROperfc{\Mm}{C}$ respectively.

\newcommand{\EGr}{\mathsf{Gr}}
\newcommand{\EMGr}{\mathsf{MGr}}
\newcommand{\ERMGr}{\mathsf{RMGr}}
There are other (pseudo)functors from $\Set$ to $\Catadj$ that we want to consider:
\begin{itemize}
 \item $\morp{\EGr}{\Set}{\Catadj}$
which sends every set $C$ to the category $\Mm^{C\times C}$ and every map $\morp{f}{C}{D}$ to the adjunction
\[
 \indadjpair{f}{\Mm^{C\times C}}{\Mm^{D\times D}}
\]
induced by the morphism 
$\morp{f\times f}{C\times C}{D\times D}$.
The bifibred category associated to it is $\EGraph{\Mm}$, the category of \emph{$\Mm$-enriched graphs}.

\item $\morp{\EMGr}{\Set}{\Catadj}$ which sends every set $C$ to the category $\Mm^{\Sqofc{C}}$ and every map $\morp{f}{C}{D}$ to the adjunction
\[
 \indadjpair{f}{\Mm^{\Sqofc{C}}}{\Mm^{\Sqofc{D}}}
\]
induced by the morphism $\morp{\Sqofc{f}}{\Sqofc{C}}{\Sqofc{D}}$.
The bifibred category associated to it is $\EMGraph{\Mm}$, the category of \emph{$\Mm$-enriched multi-graphs}.

\item $\morp{\ERMGr}{\Set}{\Catadj}$ which sends every set $C$ to the category $\Mm^{\RSqofc{C}}$ and every map $\morp{f}{C}{D}$ to the adjunction
\[
 \indadjpair{f}{\Mm^{\RSqofc{C}}}{\Mm^{\RSqofc{D}}}
\]
induced by the morphism $\morp{f}{\RSqofc{C}}{\RSqofc{D}}$. The bifibred category associated to it is $\ERMGraph{\Mm}$, the category of \emph{$\Mm$-enriched reduced multi-graphs}.
\end{itemize}
 
We want now to see objects of $\ECat{\Mm}$ (resp. $\EOper{\Mm}$) as  $\Mm$-enriched graphs ( resp. multi-graphs) with structure. We first need the following facts:
\begin{itemize}
\item there is a non-symmetric operad $\Opcat{C}$ such that $\Alg{\Opcat{C}}{\Mm}\cong\ECatfc{\Mm}{C}$. The set of colours of $\Opcat{C}$ is $C\times C$;
\item there is a symmetric operad $\Opop{C}$ such that $\Alg{\Opop{C}}{\Mm}\cong\EOperfc{\Mm}{C}$. The set of colours of $\Opop{C}$ is $\Sqofc{C}$;
\item there is a symmetric operad $\Opnsop{C}$ such that $\Alg{\Opnsop{C}}{\Mm}\cong\ENSOperfc{\Mm}{C}$. The set of colours of $\Opnsop{C}$ is $\Sqofc{C}$;
\item there is a symmetric operad $\Oprop{C}$ such that $\Alg{\Oprop{C}}{\Mm}\cong\EROperfc{\Mm}{C}$. The set of colours of $\Opnsop{C}$ is $\RSqofc{C}$.
\end{itemize}
A description of $\Opcat{C}$ and $\Opop{C}$ can be found in \cite{BM05};
$\Opop{C}$ is also described in detail in \cite{GVR12}, the operads $\Opnsop{C}$ and $\Oprop{C}$ are defined in a similar way.

As special cases of adjunction (\ref{adj.alg}) we get the followings:

\begin{equation}\label{eq.ff.cat}
 \adjpair{F_{\Opcat{C}}}{U_{\Opcat{C}}}{\Mm^{C\times C}}{\ECatfc{\Mm}{C}}
\end{equation}
\begin{equation}\label{eq.ff.op}
 \adjpair{F_{\Opop{C}}}{U_{\Opop{C}}}{\Mm^{\Sqofc{C}}}{\EOperfc{\Mm}{C}}
\end{equation}
\begin{equation}\label{eq.ff.nsop}
 \adjpair{F_{\Opnsop{C}}}{U_{\Opnsop{C}}}{\Mm^{\Sqofc{C}}}{\ENSOperfc{\Mm}{C}}
\end{equation}
\begin{equation}\label{eq.ff.rop}
 \adjpair{F_{\Oprop{C}}}{U_{\Oprop{C}}}{\Mm^{\RSqofc{C}}}{\EROperfc{\Mm}{C}}\ .
\end{equation}

Furthermore for every map of sets $\morp{f}{C}{D}$ there are morphisms of operads $\morp{\Opcat{f}}{\Opcat{C}}{\Opcat{D}}$ and $\morp{\Opop{f}}{\Opcat{C}}{\Opcat{D}}$ (and the same is true if we replace $\Opop{-}$ with $\Opnsop{-}$ or $\Oprop{-}$).
Thanks to (\ref{eq.adj.op}), from each of these morphisms we get an adjunction $(f_!,f^*)$ and a commutative diagram of adjunctions (we picture it only in the case of $\Opop{f}$ but the others are similar)
\begin{equation}\label{diag.fib1}
 \xymatrix{\EOperfc{\Mm}{C} \ar@<1ex>[r]^{f_!} \ar@<1ex>[d]^{U_{\Opop{C}}} & \EOperfc{\Mm}{D} \ar@<1ex>[l]^{f^*} \ar@<-1ex>[d]_{U_{\Opop{D}}}\\ 
                                \Mm^{\Sqofc{C}} \ar@<-1ex>[r]_{f_!} \ar@<1ex>[u]^{F_{\Opop{C}}}& \Mm^{\Sqofc{D}} \ar@<-1ex>[l]_{f^{*}} \ar@<-1ex>[u]_{F_{\Opop{D}}}}
\end{equation}
and the horizontal maps turn out to be exactly the direct-inverse image adjunctions associated to the corresponding bifibration. This amounts to say that we have fibred adjunctions
 \begin{equation}\label{eq.ff.cat2}
 \adjpair{F_{\Opcat{}}}{U_{\Opcat{}}}{\EGraph{\Mm}}{\ECat{\Mm}}
\end{equation}
\begin{equation}\label{eq.ff.op2}
 \adjpair{F_{\Opop{}}}{U_{\Opop{}}}{\EMGraph{\Mm}}{\EOper{\Mm}}
\end{equation}
\begin{equation}\label{eq.ff.nsop2}
 \adjpair{F_{\Opnsop{}}}{U_{\Opnsop{}}}{\EMGraph{\Mm}}{\ENSOper{\Mm}}
\end{equation}
\begin{equation}\label{eq.ff.rop2}
 \adjpair{F_{\Oprop{}}}{U_{\Oprop{}}}{\ERMGraph{\Mm}}{\EROper{\Mm}}
\end{equation}
that fiber-wise are given by (\ref{eq.ff.cat}), (\ref{eq.ff.op}), (\ref{eq.ff.nsop}) and (\ref{eq.ff.rop}) respectively.

For every $C\in \Set$ there is also a morphism of operads $\morp{j_C}{\Opcat{C}}{\Opop{C}}$ which induces an adjunction between the categories of algebras
\begin{equation}\label{eq.adj.ff.cat.op}
 \indadjpair{j_C}{\ECatfc{\Mm}{C}}{\EOperfc{\Mm}{C}}
\end{equation}
The functor $j_{C!}$ is the usual inclusion of the category of categories with $C$ as set of objects in the category of $C$-coloured operads.

Furthermore for every map of sets $\morp{f}{C}{D}$ the following diagram of adjunctions is commutative 
\begin{equation}\label{diag.fib2}
 \xymatrix{\EOperfc{\Mm}{C} \ar@<1ex>[r]^{f_!} \ar@<1ex>[d]^{\indr{j_C}} & \EOperfc{\Mm}{D} \ar@<1ex>[l]^{f^*} \ar@<-1ex>[d]_{\indr{j_D}}\\ 
                                \ECatfc{\Mm}{C} \ar@<-1ex>[r]_{f^{\sharp}_!} \ar@<1ex>[u]^{\indl{j_C}} & \ECatfc{\Mm}{D} \ar@<-1ex>[l]_{f^{\sharp *}} \ar@<-1ex>[u]_{\indl{j_D}}}
\end{equation}

so we get an adjunction of fibred categories
\begin{equation}
 \indadjpair{j}{\ECat{\Mm}}{\EOper{\Mm}}
\end{equation}
which fiber-wise is given by \ref{eq.adj.ff.cat.op}.
The functor $j_!$ is just the usual fully faithful inclusion of $\ECat{\Mm}$ in $\EOper{\Mm}$.
Similarly we have morphisms of operads from $\Opcat{C}$ to $\Opnsop{C}$ and $\Oprop{C}$, which give us adjunctions:
\begin{equation}
 \indadjpair{j}{\ECat{\Mm}}{\ENSOper{\Mm}}
\end{equation}
\begin{equation}
 \indadjpair{j}{\ECat{\Mm}}{\EROper{\Mm}}
\end{equation}

\subsection{Homotopy theory of the fibers}

Assume that $\Mm$ is a cofibrantly generated monoidal model category (\emph{cf.} \cite{Ho99}) and let $I,J$ be the sets of generating cofibrations and generating trivial cofibrations
respectively.

In order to prove the existence of the canonical model structure on $\ECat{\Mm}$, $\EOper{\Mm}$, $\ENSOper{\Mm}$ or $\EROper{\Mm}$ we want that all the fibers of the corresponding bifibration carry the transferred model structure. For this reason we give the following definitions:

\begin{defi}\label{opadefi} A cofibrantly generated monoidal model category $\Mm$ with a distinguished saturated class of morphisms $K$:

\begin{enumerate}
\item \emph{admits transfer ($K$-transfer) for categories} if for every $C\in \Set$ the operad $\Opcat{C}$ is admissible (resp. $K$-admissible) in $\Mm$; 
\item \emph{admits transfer ($K$-transfer) for (symmetric) operads} if for every $C\in \Set$ the operad $\Opop{C}$ is admissible (resp. $K$-admissible) in $\Mm$;
\item \emph{admits transfer ($K$-transfer) for non-symmetric operads} if for every $C\in \Set$ the operad $\Opnsop{C}$ is admissible (resp. $K$-admissible) in $\Mm$ (hence admissible);
\item \emph{admits transfer ($K$-transfer) for reduced operads} if for every $C\in \Set$ the operad $\Oprop{C}$ is admissible (resp. $K$-admissible) in $\Mm$.
\end{enumerate}
\end{defi}

\begin{rmk}
As a consequence of Proposition \ref{admis.operad}, if a $K$-compactly generated monoidal category $\Mm$ admits $K$-transfer (and transfer) for operads (non-symmetric operads, reduced operads, categories) then it admits transfer for operads (resp. non-symmetric operads, reduced operads, categories).
\end{rmk}

Note that if $\Mm$ admits transfer ($K$-transfer) for (non-symmetric, reduced) operads then it admits transfer ($K$-transfer) for categories.

Observe that if $\Mm$ admits transfer for (non-symmetric, reduced) operads then all the adjunctions in diagrams (\ref{diag.fib1}) and (\ref{diag.fib2}) are Quillen adjunctions.

The operad $\Opcat{C}$ is non-symmetric. The operads $\Opop{C}$, $\Opnsop{C}$ and $\Oprop{C}$ are only rigid, but only the last two are tame (\emph{cf.} \cite[section 11-12]{BB13}).

As special cases of Propositions \ref{BMstone} and \ref{compstone.alg} we get:
\begin{prop}\label{compstone}
Suppose $\Mm$ is a cofibrantly generated monoidal model category:
\begin{enumerate}
\item if $\Mm$ is a strongly cofibrantly generated with cofibrant unit, admits a monoidal fibrant replacement functor and contains a cocommutative comonoidal interval object, then it admits transfer for symmetric operads. 

\item if $\Mm$ is compactly generated monoidal model category satisfying the monoid axiom, then $\Mm$ admits $\otimes$-transfer for non-symmetric operads and reduced operads.
\end{enumerate}
\end{prop}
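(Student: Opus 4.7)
The approach is a direct reduction: both clauses claim that, for every $C \in \Set$, a specific operad ($\Opop{C}$ for (1); $\Opnsop{C}$ and $\Oprop{C}$ for (2)) is (respectively $\otimes$-)admissible in the ambient product category, so I would fix $C$ throughout and invoke the stone-type criteria \ref{BMstone} and \ref{compstone.alg} that have already been set up.

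For part (1), applied to $\mathcal{O} = \Opop{C}$, the hypotheses on $\Mm$ (cofibrant unit, monoidal fibrant replacement, cocommutative comonoidal interval) are exactly what the first clause of Proposition \ref{BMstone} requires in order to produce path objects on every symmetric coloured $\Mm$-operad, hence on $\Opop{C}$. By Proposition \ref{path.admis} this already forces condition \ref{item.adm2}) of Definition \ref{Kadm}. Taking $K$ to be the class of all morphisms of $\Mm$ (with its associated local class in $\Mm^{\Sqofc{C}}$), condition \ref{item.adm1}) is vacuous, so $\Opop{C}$ is $K$-admissible. Strong cofibrant generation of $\Mm$ forces the domains of the generating (trivial) cofibrations of $\Mm^{\Sqofc{C}}$ to be small relative to this $K$, so the second clause of Proposition \ref{BMstone} lifts $K$-admissibility to genuine admissibility. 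Since $C$ was arbitrary, $\Mm$ admits transfer for symmetric operads.

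For part (2), I would fix $C$ and apply Proposition \ref{compstone.alg} to the monads associated with $\Opnsop{C}$ and $\Oprop{C}$. As recalled immediately before the statement being proved, and worked out in \cite[sections 11--12]{BB13}, both of these monads are tame polynomial---this is the crucial distinction with the symmetric case, where $\Opop{C}$ is only rigid. Since $\Mm$ is compactly generated and satisfies the monoid axiom, Proposition \ref{compstone.alg} yields $\otimes$-admissibility of both, and Proposition \ref{admis.operad} upgrades this to honest admissibility in $\Mm^{\Sqofc{C}}$ and $\Mm^{\RSqofc{C}}$ respectively. Letting $C$ vary delivers $\otimes$-transfer (hence transfer) for non-symmetric and reduced operads. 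The only step deserving real attention is checking that tameness holds uniformly in the colour set $C$ rather than only for a fixed one, but this is a componentwise verification on the polynomial diagram underlying each operad, and is precisely the content of the cited sections of \cite{BB13}; beyond that, no further model-theoretic input is needed.
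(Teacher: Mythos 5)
Your proposal is correct and follows exactly the route the paper intends: the paper gives no written proof beyond declaring the proposition a special case of Propositions \ref{BMstone} and \ref{compstone.alg}, and your argument simply makes that reduction explicit (part (1) via path objects, Proposition \ref{path.admis}, the vacuous choice of $K$ as all morphisms, and strong cofibrant generation for smallness; part (2) via tameness of the monads of $\Opnsop{C}$ and $\Oprop{C}$ from \cite[sections 11--12]{BB13} together with compact generation and the monoid axiom). Your supplementary observations---that $\Opop{C}$ fails to be tame, which is why the symmetric case needs the stronger hypotheses, and that Proposition \ref{admis.operad} upgrades $\otimes$-admissibility to admissibility under compact generation---are also consistent with the paper's surrounding remarks.
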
 


To conclude we would like to make few remarks about the generating (trivial) cofibrations of the fibers.
Suppose that $\Mm$ admits transfer for symmetric operads. We know from section \ref{sec.alg} that the sets of generating cofibrations and trivial cofibrations of $\EOperfc{\Mm}{C}$ are:
\[
 I_{\Opop{C}}=\{F_C (\iota_s(i)) |\text{ for every } i\in I, s\in \Sqofc{C} \}
\] 
and
\[
 J_{\Opop{C}}=\{F_C (\iota_s(j)) |\text{ for every } j\in J, s\in \Sqofc{C} \}
\]

For every $n\in \N$ let $s_n\in \Sqofc{[n+1]}$ be the signature
$(1,2,\dots,n;0)$.
Observe that for every set $C$ and every signature $s=(c_1,\dots,c_n; c_0)$ in $C$ of valence $n$ there is a map $\morp{f_s}{[n]}{C}$ (sending $i$ to $c_i$) such that $f_s (s_n)=s$.

 As a consequence $\iota_s=f_{s!}\iota_{s_n}$. Conversely for every map $\morp{f}{[n]}{C}$ there exists an $s\in \Sqofc{C}$   such that $\iota_s=f_! \iota_{s_n}$.
This implies that the set of generating (trivial) cofibrations of $\Mm^{\Sqofc{C}}$ can be defined as $\{f^{\sharp}_!(\iota_{s_n}(i)) |\text{ for every } i\in I, n\in \N, \morp{f}{[n]}{C} \}$ (resp. $\{f^{\sharp}_!(\iota_{s_n}(j)) |\text{ for every } j\in J, n\in \N, \morp{f}{[n]}{C} \}$).
So we can rewrite:
\[
 I_{\Opop{C}}=\{f_!F_{[n]}(\iota_{s_n}(i)) |\text{ for every } i\in I, n\in \N, \morp{f}{[n]}{C} \}
\] 
and
\[
 J_{\Opop{C}}=\{f_!F_{[n]}(\iota_{s_n}(i)) |\text{ for every } j\in J, n\in \N, \morp{f}{[n]}{C} \}
\]
We will shorten the functor $F_{[n]}\iota_{s_n}$ in $C_n$.

\begin{rmk}\label{rmk1}
Note that to give a commutative diagram of type:
\[
 \diagc{C_n(X)}{A}{C_n(Y)}{B}{C_n(i)}{a}{f}{b}{}
\]
in $\EOper{\Mm}$ is the same as to give a diagram:
\[
 \diagc{a_! C_n(X)}{A}{a_!C_n(Y)}{f^*(B)}{a_! C_n(i)}{\trc{a}}{\trf{f}}{\phi_a \trf{b}}{}
\]
in $\EOperfc{\Mm}{\Cl{A}}$.

This implies that the map $\trf{f}$ is a fibration (trivial fibration) in $\EOperfc{\Mm}{\Cl{A}}$ if and only if $f$ has the right lifting property respect to
$C_n(i)$ for every $n\in \N$ and $i\in I$ (resp. $i\in J$). 

Similar considerations apply to for $\ENSOper{\Mm}$ and $\EROper{\Mm}$. 
\end{rmk}

\section{The model structure}
\subsection{Weak equivalences and fibrations}
Let us fix a monoidal model category $(\Mm,\otimes,I)$. 
In this section we want to define the classes of weak equivalences and fibrations for the model structure that we want to establish on $\EOper{\Mm}$.

Let $\mathbb{I}$ be the $\Mm$-category with set of objects $\{0,1\}$ representing a single isomorphism, i.e. $\mathbb{I}(0,0)=\mathbb{I}(0,1)=\mathbb{I}(1,0)=\mathbb{I}(1,1)=I$.

We recall the definition of $\Mm$-interval given in \cite{BM12}:
\begin{defi}
Given a monoidal model category $\Mm$ which admits transfer for categories, a \emph{$\Mm$-interval} is a cofibrant object in $\ECatfc{\Mm}{\{0,1\}}$ (with the transferred model structure), weakly equivalent to $\mathbb{I}$.

A set $\mathfrak{G}$ of $\Mm$-intervals is \emph{generating} if each $\Mm$-interval $\mathbb{H}$ is a retract of a trivial extension $\mathbb{K}$ of a $\Mm$-interval $\mathbb{G}$ in $\mathfrak{G}$, i.e. there is a diagram in $\ECatfc{\Mm}{\{0,1\}}$

\[
\xymatrix{\mathbb{G}\ar@{>->}[r]^{j} & \mathbb{K} \ar@<0.5ex>[r]^{r} &\mathbb{H} \ar@<0.5ex>[l]^{i}}
\]

such that $j$ is a trivial cofibration and $ri=\id_{\mathbb{H}}$. 
\end{defi}
\begin{defi}\label{amalgadef}
Let $\morp{\partial_i}{\{0,1\}}{\{0,1,2\}}$ denote the order-preserving inclusion which omits $i$. The \emph{amalgamation} of two objects
$\mathbb{H}$ and $\mathbb{K}$ in $\ECatfc{\Mm}{\{0,1\}}$ is defined as $\mathbb{H}\ast\mathbb{K}=\partial_1^*(\partial_{2!}\mathbb{K}\cup \partial_{0!}\mathbb{H})$,
where the coproduct is taken in $\ECatfc{\Mm}{\{0,1,2\}}$. 
\end{defi}

Note that $\partial_{2!}\mathbb{K}\cup \partial_{0!}\mathbb{H}$ is isomorphic to the colimit of the following diagram in $\ECat{\Mm}$:

\[
\xymatrix{
 \obCat \ar[r]^{i_0}\ar[d]^{i_1} & \mathbb{H}\\
 \mathbb{K}}
\]
where $\obCat$ is the $\Mm$-category representing a single object, i.e. the one with one object $\ast$ and $\obCat(\ast,\ast)=I$; The morphism $i_j$ is the one determined by the object $j$ of the target category.  

It is clear that, given a $\Mm$-category $C$ and two morphisms $\morp{h}{\mathbb{H}}{C}$ and $\morp{k}{\mathbb{K}}{C}$ such that $h(0)=k(1)$, there is a morphism $\morp{a}{\mathbb{H}\ast\mathbb{K}}{C}$ such that $a(0)=k(0)$ and $a(1)=h(1)$.

We also recall the following definitions from \cite[2.2]{BM12}:
\begin{defi}\label{def.mod}
A $\Mm$-functor (i.e. a morphism in $\ECat{\Mm}$)  $\morp{F}{A}{B}$ is said to be:
\begin{itemize}
 \item \emph{path-lifting} if it has the right lifting property with respect to $\morp{i_k}{\obCat}{\mathbb{H}}$ for any $k\in \{0,1\}$ and any $\Enr{\Mm}$-interval
$\mathbb{H}$ (see definition \ref{amalgadef} and the discussion below for the definition of $\obCat$ and the $i_k$'s).
\item \emph{essentially surjective} if for any object $\morp{b}{\obCat}{B}$ there is an object $\morp{a}{\obCat}{A}$, a $\Enr{\Mm}$ interval $\mathbb{H}$
and a commutative diagram
\[
 \xymatrix{\obCat \ar[rr]^-{a}\ar[rd]^-{i_0}& & A \ar[dd]^{F}\\
           & \mathbb{H}\ar[rd] & \\
           \obCat \ar[rr]^-{b}\ar[ru]^-{i_1}& & B}
\] 
\end{itemize}

\end{defi}

\begin{defi}
 Two object $a_0,a_1$ of a $\Enr{\Mm}$ category $A$ are 
\begin{itemize}
\item \emph{equivalent} if there exists a $\Enr{\Mm}$ interval $\mathbb{H}$  and a 
$\morp{\gamma}{\mathbb{H}}{A}$ such that $\gamma(0)=a_0$ and $\gamma(1)=a_1$;
\item \emph{virtually equivalent} if they become equivalent in a some fibrant replacement $A_f$ of $A$ in $\ECatfc{\Mm}{\mathbf{Ob}(A)}$;
\item \emph{homotopy equivalent} if there exist maps
\[
 \morp{\alpha}{I}{A_f(a_0,a_1)}\text{ and } \morp{\beta}{I}{A_f(a_1,a_0)}
 \]
such that $\morp{\beta\alpha}{I}{A_f(a_0,a_0)}$
(resp. $\morp{\beta\alpha}{I}{A_f(a_1,a_1)}$) is homotopic to the arrow $\arr{I}{A_f(a_0,a_0)}$ (resp. $\arr{I}{A_f(a_1,a_1)}$) defining the identity of $a_0$ (resp. $a_1$).
\end{itemize}   
\end{defi}

Suppose now that $\Mm$ admits transfer for symmetric operads (non-symmetric operads, reduced operads) and $K$ is a class of morphisms in $\Mm$. We can give the following definitions:
\begin{defi}\label{defmod}
a morphism $\morp{f}{P}{Q}$  in $\EOper{\Mm}$ (resp. $\ENSOper{\Mm}$, $\EROper{\Mm}$) is said to be:
\begin{itemize}
\item a \emph{local fibration (weak equivalence, trivial fibration, local $K$-morphism)} if the corresponding morphism $\morp{\trf{f}}{P}{f^*(Q)}$ is a fibration (weak equivalence, trivial fibration, local $K$-morphism) in $\EOperfc{\Mm}{\Cl{P}}$.
 \item \emph{path-lifting} if it has the right lifting property respect to $j_!(\{ i\} \longrightarrow \mathbb{H})$, $i=0,1$ for any $\Mm$-interval $\mathbb{H}$
(equivalently $j^*(f)$ is path-lifting).
\item \emph{essentially surjective} if $j^*(f)$ is essentially surjective;
\item a \emph{fibration} if it is a path-lifting local fibration;
\item a \emph{weak equivalence} if it is an essentially surjective local weak equivalence; 
\item \emph{fully faithful} if the corresponding morphism $\morp{\trf{f}}{P}{f^*(Q)}$ is an isomorphism in $\EOperfc{\Mm}{\Cl{P}}$.
\end{itemize}
\end{defi}
\begin{rmk}\label{rmk2}
Note that from Remark \ref{rmk1} and the characterization of fibrations in the transferred model structure on $\EOperfc{\Mm}{\Cl{P}}$ it follows that a morphism $\morp{f}{P}{Q}$ is a local fibration (trivial fibration) if and only if
it has the right lifting property respect to $J_{\mathrm{loc}}=\{C_n(j)|\  j\in J \ n\in \N \}$ (risp. $I_{\mathrm{loc}}=\{C_n(i)|\  i\in I \ n\in \N \}$).
 
\end{rmk}

We will give the proof of the following lemmas only in the case of symmetric operads, since the proofs in the non-symmetric and reduced cases are identical.

\begin{lemma}
A locally fibrant $\EOper{\Mm}$ (resp. $\ENSOper{\Mm}$, $\EROper{\Mm}$) is fibrant. 
\end{lemma}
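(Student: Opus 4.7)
The plan is to verify that for a locally fibrant $P$ the canonical morphism $P \to 1$ to the terminal operad is both a local fibration and path-lifting, hence a fibration by Definition \ref{defmod}. The local-fibration part is immediate: the terminal operad has a single colour and each of its components is the terminal object of $\Mm$, so for the unique $f\colon \Cl{P} \to \{*\}$ the fibrewise morphism $P \to f^*(1)$ has components $P(s) \to *$, which is a fibration in the transferred model structure on $\EOperfc{\Mm}{\Cl{P}}$ if and only if each $P(s)$ is fibrant in $\Mm$ --- exactly the local-fibrancy hypothesis.

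For path-lifting, I need to solve, for every $\Mm$-interval $\mathbb{H}$, colour $c \in \Cl{P}$, and $i \in \{0,1\}$, the lifting problem whose top morphism is the colour $c\colon j_!(\{i\}) \to P$. Via the $(j_!, j^*)$ adjunction \eqref{eq.adj.ff.cat.op} this reduces to producing an $\Mm$-functor $\mathbb{H} \to j^*(P)$ sending $i$ to $c$. I would build such a functor as the composite $\mathbb{H} \to \mathrm{const}_c \to j^*(P)$, where $\mathrm{const}_c \in \ECatfc{\Mm}{\{0,1\}}$ is the enriched category having both hom-objects equal to $P(c;c)$ with composition and identity inherited from the monoid structure that the operad composition of $P$ endows on $P(c;c)$; the right-hand functor sends both objects of $\mathrm{const}_c$ to $c$ and is the identity on hom-objects.

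To construct $\mathbb{H} \to \mathrm{const}_c$, observe that $P(c;c)$ is fibrant by local fibrancy, so $\mathrm{const}_c$ is fibrant in the transferred model structure on $\ECatfc{\Mm}{\{0,1\}}$, while $\mathbb{H}$ is cofibrant there by the definition of $\Mm$-interval. There is a canonical $\Mm$-functor $\mathbb{I} \to \mathrm{const}_c$ whose value on every hom-component is the unit $u_c\colon I \to P(c;c)$, the functoriality amounting to the monoid-unit axiom. Since $\mathbb{H} \simeq \mathbb{I}$ in $\HoC{\ECatfc{\Mm}{\{0,1\}}}$, the induced isomorphism provides a bijection $[\mathbb{I}, \mathrm{const}_c] \cong [\mathbb{H}, \mathrm{const}_c]$ of homotopy classes; the class of $\mathbb{I} \to \mathrm{const}_c$ on the left transports to a class on the right, and because $\mathbb{H}$ is cofibrant and $\mathrm{const}_c$ is fibrant, this class is represented by an actual morphism $\mathbb{H} \to \mathrm{const}_c$, which composed with $\mathrm{const}_c \to j^*(P)$ sends both $0$ and $1$ to $c$ as required. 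The proofs for $\ENSOper{\Mm}$ and $\EROper{\Mm}$ are formally identical.

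The principal technical obstacle is upgrading the abstract weak equivalence $\mathbb{H} \simeq \mathbb{I}$ to a genuine morphism into the fibrant target $\mathrm{const}_c$; the homotopy-category argument above can equally well be unpacked concretely by choosing a fibrant replacement $\mathbb{I} \to \mathbb{I}^{f}$ in the fibre, extending $\mathbb{I} \to \mathrm{const}_c$ along the acyclic cofibration $\mathbb{I} \to \mathbb{I}^{f}$ using fibrancy of $\mathrm{const}_c$, and composing with a weak equivalence $\mathbb{H} \to \mathbb{I}^{f}$ between the cofibrant $\mathbb{H}$ and the fibrant $\mathbb{I}^{f}$.
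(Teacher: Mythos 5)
Your proof is correct, but it takes a genuinely different route from the paper. The paper disposes of this lemma in one line: since path-lifting for operads is \emph{defined} through $j^*$ (Definition \ref{defmod}), it observes that $j^*(P)$ is a locally fibrant $\Mm$-category and invokes \cite[Lemma 2.3]{BM12}, which says precisely that locally fibrant $\Mm$-categories are path-lifting. You instead re-prove that underlying fact from scratch: you transpose the lifting problem along $j_!\dashv j^*$, build the two-object $\Mm$-category $\mathrm{const}_c$ on the endomorphism monoid $P(c;c)$ (note it has all four hom-objects equal to $P(c;c)$, not two --- a harmless slip), map $\mathbb{I}$ into it via the unit $u_c$, and rectify the resulting morphism $\mathbb{H}\to\mathrm{const}_c$ in $\HoC{\ECatfc{\Mm}{\{0,1\}}}$ to an honest morphism using cofibrancy of the interval and fibrancy of $\mathrm{const}_c$; the concrete unpacking via a fibrant replacement $\mathbb{I}\to\mathbb{I}^f$ is the cleanest version and is airtight, granting the standard facts that $\HoC{\mathcal{C}}(X,Y)$ is computed by genuine maps when $X$ is cofibrant and $Y$ fibrant. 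A nice feature you correctly exploit is that morphisms in the fibre $\ECatfc{\Mm}{\{0,1\}}$ are the identity on objects, so the composite into $j^*(P)$ sends \emph{both} endpoints to $c$, handling $i=0$ and $i=1$ simultaneously, and functoriality forces the hom-component on $\obCat$ to be $u_c$, so the lifting square genuinely commutes. What each approach buys: the paper's reduction is shorter and fits its systematic strategy of transporting the category-level lemmas of \cite{BM12} along $j^*$, whereas your argument is self-contained (essentially reconstructing the content of \cite[Lemma 2.3]{BM12}) and makes visible exactly where local fibrancy is indispensable --- without fibrancy of $P(c;c)$ the weak equivalence $\mathbb{H}\simeq\mathbb{I}$ could not be realized by an actual morphism into $\mathrm{const}_c$, which is consistent with the hypothesis being non-vacuous.
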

\begin{proof}
If $P$ is locally fibrant in $\EOper{\Mm}$ then $j^*(P)$ is locally fibrant in $\ECat{\Mm}$ and hence it is path-lifting ( \cite[Lemma 2.3]{BM12}).   
\end{proof}

\begin{lemma}\label{lemma2.4}
A morphism of $\EOper{\Mm}$ (resp. $\ENSOper{\Mm}$, $\EROper{\Mm}$) is a trivial fibration if and only if it is a local trivial fibration which is surjective on colours. 
\end{lemma}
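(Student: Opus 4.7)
The plan is to prove both implications by reducing the path-lifting property---the only nontrivial point---to the analogous statement for $\Mm$-enriched categories from \cite{BM12}, exploiting the adjunction $\indadjpair{j}{\ECat{\Mm}}{\EOper{\Mm}}$ introduced in the previous section.

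For the forward direction, suppose $\morp{f}{P}{Q}$ is a trivial fibration in the sense of Definition \ref{defmod}, i.e.\ both a fibration and a weak equivalence. Being simultaneously a local fibration and a local weak equivalence, $f$ is automatically a local trivial fibration. To deduce surjectivity on colours, I would pick $q\in\Cl{Q}$, invoke essential surjectivity of $f$ to obtain $p\in\Cl{P}$, an $\Mm$-interval $\mathbb{H}$, and a morphism $\morp{h}{\indl{j}\mathbb{H}}{Q}$ with $h(0)=f(p)$ and $h(1)=q$, and then apply path-lifting of $f$ against $\indl{j}(\{0\}\hookrightarrow\mathbb{H})$ to produce a lift $\morp{\tilde h}{\indl{j}\mathbb{H}}{P}$ with $\tilde h(0)=p$. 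By commutativity, $\tilde h(1)\in\Cl{P}$ is the desired preimage of $q$.

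For the converse, suppose $f$ is a local trivial fibration which is surjective on colours. Essential surjectivity of $f$ comes for free: given $b\in\Cl{Q}$, surjectivity on colours gives $a\in\Cl{P}$ with $f(a)=b$, and mapping any $\Mm$-interval $\mathbb{H}$ constantly to $b$ through $\obCat$ yields the commutative diagram required by Definition \ref{def.mod}. The real content is path-lifting. I would pass to $\indr{j}(f)$ in $\ECat{\Mm}$: it has the same colour map as $f$, so it is surjective on objects, and it is a local trivial fibration in $\ECat{\Mm}$ because $\indr{j}\colon\EOperfc{\Mm}{C}\to\ECatfc{\Mm}{C}$ is right Quillen (being the right adjoint in the Quillen adjunction (\ref{eq.adj.ff.cat.op}); concretely, at the underlying $\Mm$-valued level it is just the restriction $\Mm^{\Sqofc{C}}\to\Mm^{C\times C}$, which preserves level-wise trivial fibrations). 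Invoking the analogue of the present lemma for $\Mm$-enriched categories from \cite{BM12}, $\indr{j}(f)$ is a trivial fibration in $\ECat{\Mm}$, in particular a fibration, hence path-lifting. Transposing this lifting property through the adjunction $(\indl{j},\indr{j})$ gives path-lifting for $f$.

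The main obstacle is verifying that $\indr{j}$ is right Quillen fiber-wise, and that the categorical analogue in \cite{BM12} is applicable; both points are essentially bookkeeping, grounded in the level-wise characterization of (trivial) fibrations in the transferred model structures on $\EOperfc{\Mm}{C}$ and $\ECatfc{\Mm}{C}$.
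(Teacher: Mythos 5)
Your proof is correct and takes essentially the same route as the paper: both reduce the statement to \cite[Lemma 2.4]{BM12} through the adjunction $(\indl{j},\indr{j})$, using that path-lifting and essential surjectivity of $f$ are by definition properties of $j^*(f)$, that $j^*$ preserves local (trivial) fibrations fibre-wise, and that surjectivity on colours of $f$ is surjectivity on objects of $j^*(f)$ --- your forward direction merely inlines the lifting argument rather than citing the categorical lemma. The one small inaccuracy is the claim that \emph{any} $\Mm$-interval $\mathbb{H}$ maps ``constantly to $b$ through $\obCat$'': since $\obCat$ is not terminal in $\ECat{\Mm}$, such a map requires a morphism $\mathbb{H}\to\mathbb{I}$ (available e.g.\ for a cofibrant replacement of $\mathbb{I}$, or via reflexivity of equivalence, Lemma \ref{lemma2.7}), but this is harmless because the essential surjectivity you need is in any case part of the conclusion of \cite[Lemma 2.4]{BM12}, which you invoke anyway.
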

\begin{proof}
A morphism $f$ in $\EOper{\Mm}$ is a trivial fibration if and only if it is a local trivial fibration and $j^{*}(f)$ is path-lifting and essentially surjective.
Since $j^{*}(f)$ is a local fibration if $f$ is so and $f$ is surjective on colours if and only if $j^{*}(f)$ is surjective on objects the statement follows from \cite[Lemma 2.4]{BM12}.   
\end{proof}
\subsection{(Virtual) equivalence of colours}
We continue to assume that $\Mm$ admits transfer for symmetric operads (non-symmetric operads or reduced operads, depending on the case we are interested in). The goal of this section is to prove Proposition \ref{prop2.12}, i.e. the 2-out-of-3 property for weak equivalences. We are going to give definitions and proofs only in the case of symmetric operads, since the arguments in the non-symmetric and reduced cases are almost identical.

\begin{defi}
 
Two colours $c_0,c_1$  of a $\Mm$-Operad $P$ are: 
\begin{itemize}
 \item \emph{equivalent} if they are equivalent as objects of $j^*(P)$ (in the sense of \cite{BM12});
 \item \emph{virtually equivalent} if they become virtually equivalent in some fibrant replacement $P_{f}$ of $P$ in $\EOperfc{\Mm}{\Cl{P}}$;
 \item \emph{homotopy equivalent} if they are homotopy equivalent as objects of $j^*(P)$. The relation of homotopy equivalence will be denoted by $\sim$.
\end{itemize}

\end{defi}

From Lemma 2.8 in \cite{BM12} applied to $j^*(P)$ we get immediately the following:

\begin{lemma}\label{lemma2.7}
For any $\Enr{\Mm}$ operad $P$, equivalence and virtual equivalence are equivalence relations on $\Cl{P}$.
\end{lemma}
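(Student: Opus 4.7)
The plan is to reduce both statements to the corresponding assertion for $\Mm$-enriched categories, namely \cite[Lemma 2.8]{BM12}, by passing to the underlying $\Mm$-category $\indr{j}(P)$ of the operad $P$. Note that $\Cl{P}=\Ob{\indr{j}(P)}$, and that the notions of equivalent and of homotopy equivalent colours of $P$ are by definition the corresponding notions for the associated objects of $\indr{j}(P)$. For these two relations the conclusion therefore follows immediately from \emph{loc.\ cit.}

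It remains to handle virtual equivalence. The key technical step is to check that, for any fibrant replacement $P \to P_{f}$ of $P$ in $\EOperfc{\Mm}{\Cl{P}}$ (which exists by the standing assumption that $\Mm$ admits transfer for symmetric operads), the induced morphism $\indr{j}(P)\to \indr{j}(P_{f})$ is a fibrant replacement of $\indr{j}(P)$ in $\ECatfc{\Mm}{\Cl{P}}$. Since both fibers carry the transferred model structure, the adjunction in \eqref{eq.adj.ff.cat.op} is Quillen. In fact something stronger holds: the weak equivalences and fibrations in the two fibers are defined componentwise at the level of the underlying multi-graph and graph respectively, and on these underlying objects $\indr{j_C}$ merely forgets the components of arity different from $1$. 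Hence $\indr{j_C}$ preserves \emph{all} weak equivalences and \emph{all} fibrations, not only those between fibrant objects, which is what is needed here.

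Combining these observations, two colours $c_0,c_1\in\Cl{P}$ are virtually equivalent in $P$ in the sense of the preceding definition if and only if they are virtually equivalent as objects of $\indr{j}(P)$; for the reverse direction one uses the fact---contained in \cite[Lemma~2.8]{BM12}---that virtual equivalence of objects in an $\Mm$-category does not depend on the chosen fibrant replacement, so that we may compute it through any fibrant replacement of $\indr{j}(P)$, in particular through one of the form $\indr{j}(P_f)$. The reflexivity, symmetry and transitivity of both relations then follow from \cite[Lemma~2.8]{BM12} applied to $\indr{j}(P)$. The only slightly delicate point I anticipate is the componentwise-detection argument that ensures $\indr{j_C}$ preserves weak equivalences and fibrations between arbitrary (not just fibrant) objects of the fibers; everything else is formal.
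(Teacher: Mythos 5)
Your proof is correct and takes essentially the same route as the paper, whose entire proof consists of applying \cite[Lemma 2.8]{BM12} to $j^*(P)$. The compatibility step you spell out---that $j^*_{\Cl{P}}$ preserves weak equivalences and fibrations (both being detected componentwise on underlying graphs and multi-graphs), hence carries a fibrant replacement of $P$ in $\EOperfc{\Mm}{\Cl{P}}$ to one of $j^*(P)$ in $\ECatfc{\Mm}{\Cl{P}}$, so that virtual equivalence of colours of $P$ agrees with virtual equivalence of objects of $j^*(P)$---is exactly what the paper leaves implicit here and invokes explicitly only later, in the proof of Lemma \ref{lemma2.9}.
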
 

\begin{lemma}\label{lemma2.8}
A local weak equivalence of operads $\morp{F}{P}{Q}$ reflects virtual equivalence of colours, i.e. for every $c,d\in \Cl{P} $ if $F(c)$ and $F(d)$ are virtually equivalent in $Q$, then
$c$ and $d$ are so in $P$.
\end{lemma}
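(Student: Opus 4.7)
My plan is to reduce the statement to the analogous result for $\Mm$-enriched categories by means of the ``underlying category'' functor $j^{*}$.

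First I will establish a dictionary: two colours $c_{0}, c_{1}$ of an $\Mm$-operad $R$ are virtually equivalent as colours of $R$ if and only if they are virtually equivalent as objects of $j^{*}(R)$. Since $\Mm$ admits transfer for symmetric operads, the adjunction (\ref{eq.adj.ff.cat.op}) is a Quillen pair; moreover, because local weak equivalences and local fibrations in both $\EOperfc{\Mm}{C}$ and $\ECatfc{\Mm}{C}$ are detected pointwise in the underlying product categories and $j_{C}^{*}$ merely restricts to signatures of valence one, the functor $j_{C}^{*}$ preserves all local weak equivalences, not only those between fibrant objects. In particular, if $R_{f}$ is a fibrant replacement of $R$ in $\EOperfc{\Mm}{\Cl{R}}$, then $j^{*}(R_{f})$ is a fibrant replacement of $j^{*}(R)$ in $\ECatfc{\Mm}{\Cl{R}}$.

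The forward direction of the dictionary is then immediate from the fact that equivalence of colours is defined via $j^{*}$: a witness $\gamma\colon \mathbb{H} \to j^{*}(R_{f})$ for equivalence of $c_{0}, c_{1}$ in $R_{f}$ is directly a witness of their equivalence in the fibrant replacement $j^{*}(R_{f})$ of $j^{*}(R)$. For the converse I invoke Lemma \ref{lemma2.7} together with the standard model-categorical fact that virtual equivalence does not depend on the choice of fibrant replacement (two such replacements are connected by a weak equivalence over the identity on objects, obtained by lifting a trivial cofibration against a fibration between fibrant objects, and such a morphism preserves equivalence of objects since it fixes objects). Thus virtual equivalence in $j^{*}(R)$ can be witnessed in the specific fibrant replacement $j^{*}(R_{f})$, and the witnessing map realizes equivalence of $c_{0}, c_{1}$ as colours of $R_{f}$, which is exactly virtual equivalence in $R$.

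With the dictionary in place the argument closes in one step. The hypothesis that $F\colon P \to Q$ is a local weak equivalence of operads unfolds to $\trf{F}\colon P \to F^{*}(Q)$ being a weak equivalence in $\EOperfc{\Mm}{\Cl{P}}$; applying the weak-equivalence-preserving functor $j^{*}$ then yields that $j^{*}(F)\colon j^{*}(P) \to j^{*}(Q)$ is a local weak equivalence of $\Mm$-enriched categories. Applying the category analog of the present lemma, namely \cite[Lemma~2.8]{BM12}, to $j^{*}(F)$: if $F(c)$ and $F(d)$ are virtually equivalent in $Q$, then by the dictionary they are virtually equivalent in $j^{*}(Q)$; by the category result the colours $c, d$ are virtually equivalent in $j^{*}(P)$; and by the dictionary again they are virtually equivalent in $P$. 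The main technical content is the back-and-forth of the first two paragraphs; once that is in hand, the remainder is a citation of the corresponding fact for $\Mm$-categories.
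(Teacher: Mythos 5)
Your proposal is correct and takes essentially the same route as the paper, whose entire proof is that $j^*$ preserves local weak equivalences so the statement follows from the categorical analogue in \cite{BM12} (cited there as Lemma 2.9 of \cite{BM12}, not Lemma 2.8 --- in this paper's numbering, Lemma 2.8 of \cite{BM12} is the fact, quoted as Lemma \ref{lemma2.7}, that (virtual) equivalence is an equivalence relation). Your explicit dictionary --- that $j^*$ preserves weak equivalences and fibrant objects, hence sends an operadic fibrant replacement $P_f$ to a fibrant replacement of $j^*(P)$, making virtual equivalence of colours agree with virtual equivalence of objects of $j^*(P)$ --- is exactly what the paper leaves implicit here and spells out only parenthetically in the proof of Lemma \ref{lemma2.9}.
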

\begin{proof}
 Since $j^*$ preserves local weak equivalences, it follows directly from \cite[Lemma 2.9]{BM12}.
\end{proof}

\begin{lemma}\label{lemma2.9}
 If $\Mm$ is right proper then for any $\Enr{\Mm}$ operad $P$, virtually equivalent colours of $P$ are equivalent.
\end{lemma}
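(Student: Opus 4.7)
The strategy is to reduce the statement to its $\Mm$-categorical counterpart \cite[Lemma 2.10]{BM12} by transporting everything along the right adjoint $j^* \colon \EOperfc{\Mm}{C} \to \ECatfc{\Mm}{C}$. The key preliminary observation is that, on each fiber, $j^*$ preserves all weak equivalences and all fibrations, not merely those between fibrant objects. Indeed, the transferred model structures on $\EOperfc{\Mm}{C}$ and $\ECatfc{\Mm}{C}$ detect weak equivalences and fibrations componentwise through the underlying objects in $\Mm^{\Sqofc{C}}$ and $\Mm^{C\times C}$ respectively, and $j^*$ is the restriction along the inclusion $C\times C \hookrightarrow \Sqofc{C}$ picking out the arity-one signatures. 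In particular $j^*$ sends fibrant objects to fibrant objects and takes fibrant replacements to fibrant replacements.

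Now suppose $c_0, c_1 \in \Cl{P}$ are virtually equivalent colours of $P$. Unwinding the definition, there is a fibrant replacement $P \to P_f$ in $\EOperfc{\Mm}{\Cl{P}}$ such that $c_0, c_1$ are virtually equivalent as objects of the $\Mm$-category $j^*(P_f)$, which by the categorical definition recalled from \cite{BM12} means that there is a further fibrant replacement $j^*(P_f) \to A_f$ in $\ECatfc{\Mm}{\Cl{P}}$ in which $c_0$ and $c_1$ are equivalent. The composite $j^*(P) \to j^*(P_f) \to A_f$ is then a weak equivalence with fibrant target, so it exhibits $A_f$ as a fibrant replacement of $j^*(P)$ in $\ECatfc{\Mm}{\Cl{P}}$. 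Consequently $c_0$ and $c_1$ are virtually equivalent as objects of $j^*(P)$.

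At this point the categorical analogue \cite[Lemma 2.10]{BM12}, applied to the $\Mm$-category $j^*(P)$, delivers that $c_0$ and $c_1$ are already equivalent in $j^*(P)$; by definition they are then equivalent as colours of $P$. The only substantive input is the invocation of \cite[Lemma 2.10]{BM12}, which is precisely where right properness of $\Mm$ enters; everything else is a matter of unpacking definitions, made easy by the transparent behaviour of $j^*$ with respect to the two transferred model structures. The main obstacle I would anticipate is verifying that the concatenation of a fibrant replacement in $\EOperfc{\Mm}{\Cl{P}}$ with a fibrant replacement in $\ECatfc{\Mm}{\Cl{P}}$ really produces a fibrant replacement of $j^*(P)$, but this is immediate once $j^*$ is known to preserve weak equivalences on the nose.
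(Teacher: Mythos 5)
Your proof is correct and takes essentially the same approach as the paper: both reduce to \cite[Lemma 2.10]{BM12} applied to $j^*(P)$, using that $j^*$ preserves weak equivalences and fibrant objects (hence fibrant replacements) because both transferred model structures detect weak equivalences and fibrations on the underlying collections. Your explicit composition of the two fibrant replacements merely spells out what the paper leaves implicit.
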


\begin{proof}
 It follows from \cite[Lemma 2.10]{BM12} since if $P_f$ is a fibrant replacement for $P$ then $j^*(P_f)$ is a cofibrant replacement for $j^*(P)$ ($j_{\Cl{P}}^*$ preserves weak equivalences and fibrant objects hence it preserves fibrant replacements).
\end{proof}

\begin{lemma}\label{lemma2.10}
 In any $\Enr{\Mm}$ operad $P$ virtually equivalent colours are homotopy equivalent.
\end{lemma}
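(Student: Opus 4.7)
The plan is to reduce the statement to its known analogue for enriched categories (proved in \cite{BM12}) by applying the fibrewise forgetful functor $j^* \colon \EOper{\Mm} \to \ECat{\Mm}$, exactly in the same spirit as the proofs of Lemmas \ref{lemma2.8} and \ref{lemma2.9} above.

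First, I would observe that the fibrewise component $\morp{j_{\Cl{P}}^*}{\EOperfc{\Mm}{\Cl{P}}}{\ECatfc{\Mm}{\Cl{P}}}$ is the right Quillen functor of the adjunction (\ref{eq.adj.ff.cat.op}); in particular it preserves weak equivalences between fibrant objects and it preserves fibrant objects. Consequently, if $P\to P_f$ is a fibrant replacement of $P$ in $\EOperfc{\Mm}{\Cl{P}}$, then $j^*(P)\to j^*(P_f)$ is a fibrant replacement of $j^*(P)$ in $\ECatfc{\Mm}{\Cl{P}}$ (note that $\Cl{P}=\mathbf{Ob}(j^*(P))$, so this comparison is meaningful).

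Second, by hypothesis the colours $c,d\in \Cl{P}$ are virtually equivalent in $P$, meaning that they are equivalent in some such fibrant replacement $P_f$; by the very definition of equivalence of colours, this is the same as saying that $c$ and $d$ are equivalent as objects of $j^*(P_f)$. Combined with the previous paragraph, this says that $c$ and $d$ are virtually equivalent as objects of the $\Mm$-category $j^*(P)$ in the sense of \cite{BM12}.

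Finally, I would invoke the corresponding statement for $\Mm$-enriched categories established in \cite{BM12} (the categorical analogue of the present lemma), which asserts that virtually equivalent objects of any $\Mm$-category are homotopy equivalent. Applied to $j^*(P)$, it yields that $c$ and $d$ are homotopy equivalent in $j^*(P)$, which is exactly the definition of $c\sim d$ in $P$. I do not foresee any real obstacle: the only point requiring mild care is that passing from a fibrant replacement of $P$ to a fibrant replacement of $j^*(P)$ is legitimate, and this is ensured by the Quillen adjunction (\ref{eq.adj.ff.cat.op}) together with the fact that $\Mm$ admits transfer for (symmetric, non-symmetric, reduced) operads, and hence a fortiori for categories.
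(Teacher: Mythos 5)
Your proposal is correct and takes essentially the same route as the paper: virtually equivalent colours of $P$ are virtually equivalent as objects of $j^*(P)$ (because $j^*$ carries a fibrant replacement of $P$ in $\EOperfc{\Mm}{\Cl{P}}$ to one of $j^*(P)$ in $\ECatfc{\Mm}{\Cl{P}}$), and then the statement for $\Mm$-categories, \cite[Lemma 2.11]{BM12}, yields homotopy equivalence in $j^*(P)$, i.e.\ in $P$. One small repair to your justification: since $P$ itself need not be fibrant, preservation of weak equivalences \emph{between fibrant objects} does not by itself show that $j^*(P)\to j^*(P_f)$ is a weak equivalence; instead use that $j^*$ preserves \emph{all} weak equivalences, as these are defined levelwise in the transferred structures, which is exactly the justification the paper records in the proof of Lemma \ref{lemma2.9}.
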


\begin{proof}
 if $c_0, c_1$ are virtually equivalent colours in $P$ then they are virtually equivalent as objects of $j^*(P)$ thus they are homotopy equivalent by Lemma 2.11 in \cite{BM12} 
\end{proof}


\begin{lemma}\label{lemma2.11}
Given $n\in \N$, $P\in \EOper{\Mm}$ and $c_0,\dots c_n, d_0, \dots, d_n \in \Cl{P}$ such that $c_0 \sim d_0, c_1 \sim d_1 ,\dots , c_n \sim d_n$  and $c\sim d$.
Then $P(c_0,\dots, c_n; c)$ and $P(d_0,\dots, d_n; d)$ are related by a zig-zag of weak equivalences in $\Mm$. Moreover any morphism of $\Enr{\Mm}$ operads $\morp{f}{P}{Q}$
induces a functorially related zig-zag of weak equivalence between $Q(f(c_0),\dots, f(c_n); f(c))$ and $Q(f(d_0),\dots, f(d_n); f(d))$.   
\end{lemma}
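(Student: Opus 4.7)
The plan follows the strategy of \cite[Lemma 2.12]{BM12}: we reduce to a fibrant replacement of $P$ and then explicitly build a comparison map using operadic composition with the homotopy inverses witnessing $c_i\sim d_i$ and $c\sim d$.

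First choose a functorial fibrant replacement $\morp{r}{P}{P_f}$ in $\EOperfc{\Mm}{\Cl{P}}$, which exists because $\Mm$ admits transfer for operads. Each $r(s)\colon P(s)\to P_f(s)$ is then a weak equivalence in $\Mm$, and all $P_f(s)$ are fibrant. By the definition of homotopy equivalence, the relations $c_i\sim d_i$ and $c\sim d$ give morphisms $\morp{\mu_i}{I}{P_f(c_i;d_i)}$, $\morp{\nu_i}{I}{P_f(d_i;c_i)}$ for $i=0,\dots,n$, together with $\morp{\mu}{I}{P_f(c;d)}$ and $\morp{\nu}{I}{P_f(d;c)}$, such that the composites $\nu_i\mu_i$, $\mu_i\nu_i$, $\nu\mu$, $\mu\nu$ are homotopic in $\Mm$ (via the interval object) to the relevant identity operations.

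Using the operadic composition $\Gamma$ of $P_f$, define
\[
\morp{\Phi}{P_f(c_0,\dots,c_n;c)}{P_f(d_0,\dots,d_n;d)}
\]
as the morphism in $\Mm$ obtained by plugging $\nu_i$ into the $i$-th input slot and post-composing with $\mu$ (using the unit isomorphisms $X\cong I^{\otimes k}\otimes X$ to free up tensor factors). Symmetrically, define $\Psi$ in the opposite direction using the $\mu_i$'s and $\nu$. By the associativity and unitality of $\Gamma$, the composite $\Psi\circ\Phi$ coincides with the morphism produced by plugging $\mu_i\nu_i$ into the $i$-th input and post-composing with $\nu\mu$. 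Since each such composite is homotopic to the corresponding identity operation, and all $P_f(s)$ are fibrant in $\Mm$, propagating these homotopies through $\Gamma$ (which is natural, hence preserves homotopies built from the interval) shows $\Psi\circ\Phi\simeq\id$; dually $\Phi\circ\Psi\simeq\id$. Therefore $\Phi$ is a homotopy equivalence between fibrant objects of $\Mm$, and thus a weak equivalence. Composing with the weak equivalence $r$ on both ends produces the required zig-zag between $P(c_0,\dots,c_n;c)$ and $P(d_0,\dots,d_n;d)$.

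For functoriality, given $\morp{f}{P}{Q}$, apply the functorial fibrant replacement to obtain $\morp{r'}{Q}{Q_f}$; the map $r'\circ f$ corresponds under the adjunction $(f_!,f^*)$ to a morphism $P\to f^*(Q_f)$, and since $r$ is a trivial cofibration while $f^*(Q_f)$ is fibrant (because $f^*$ is right Quillen under the transfer hypothesis), this lifts to $\morp{f_f}{P_f}{f^*(Q_f)}$ covering $f$ on colours. Choosing the homotopy data in $Q_f$ as the images under $f_f$ of $\mu_i,\nu_i,\mu,\nu$, naturality of $\Gamma$ makes $f_f$ intertwine $\Phi_P$ with the analogous $\Phi_Q$; together with the naturality of $r$ and $r'$, this delivers the functorial relation between the two zig-zags. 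The main technical point will be the homotopy propagation showing $\Psi\Phi\simeq\id$; once that is carried out via the interval-object formalism, the rest amounts to unwinding definitions, and the arguments for $\ENSOper{\Mm}$ and $\EROper{\Mm}$ are identical modulo discarding, respectively restricting, the symmetric structure.
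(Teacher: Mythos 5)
Your proposal matches the paper's proof in all essentials: both pass to a fibrant replacement in $\EOperfc{\Mm}{\Cl{P}}$, construct the comparison maps by operadically plugging the unary operations witnessing the homotopy equivalences into the input slots and postcomposing at the output, conclude these maps are weak equivalences because they are mutually homotopy-inverse (equivalently, mutually inverse isomorphisms in $\HoC{\Mm}$), and obtain functoriality from an induced map of fibrant replacements carrying the chosen homotopy data forward. The only deviations are cosmetic: you change all colours simultaneously where the paper changes one colour at a time and iterates, and the ``homotopy propagation'' step you defer is precisely the point the paper also leaves at the level of an assertion, by noting the composites become identities in the homotopy category.
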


\begin{proof}
 We can prove the statement in the case in which $c_1=d_1,\dots, c_n=d_n$, the general case will follow by iteration.


By assumption we have a fibrant replacement $P_f$ of $P$ in $\EOperfc{\Mm}{\Cl{P}}$ and arrows $\morp{\alpha}{\Un_{\Mm}}{P_f(c_0,d_0)}$, $\morp{\beta}{\Un_{\Mm}}{P_f(d_0,c_0)}$ 
such that $\morp{\beta \circ \alpha}{\Un_{\Mm}}{P_f(c_0,c_0)}$ (resp. $\morp{\beta \circ \alpha}{\Un_{\Mm}}{P_f(d_0,d_0)}$) is homotopic to the arrow $\morp{\id_{c_0}}{\Un_{\Mm}}{P_f(c_0,c_0)}$ (resp. $\morp{\id_{d_0}}{\Un_{\Mm}}{P_f(d_0,d_0)}$) given by the identity.
Similarly we have arrows $\morp{\alpha'}{\Un_{\Mm}}{P_f(c,d)}$, $\morp{\beta'}{\Un_{\Mm}}{P_f(d,c)}$ such that $\beta'\circ \alpha'$ is homotopic to $\id_c$ and $\alpha'\circ \beta'$ is homotopic to $\id_d$.

Using the internal composition of $P_f$ we obtain two morphisms
\[
 \morp{(\alpha,\id_{c_1},\dots, id_{c_n})^*}{P_f(d_0,c_1,\dots, c_n; c)}{P_f(c_0,c_1,\dots, c_n; c)}
\]
\[
 \morp{(\beta,\id_{c_1},\dots, id_{c_n})^*}{P_f(c_0,c_1,\dots, c_n; c)}{P_f(d_0,c_1,\dots, c_n; c)}
\]
which become mutually inverse isomophisms in the homotopy category $\HoC{\Mm}$, hence they are weak equivalences.
Similarly the morphisms
\[
 \morp{\alpha'_*}{P_f(d_0,c_1,\dots, c_n; c)}{P_f(d_0,c_1,\dots, c_n; d)}
\]
\[
 \morp{\beta'_*}{P_f(d_0,c_1,\dots, c_n; d)}{P_f(d_0,c_1,\dots, c_n; c)}
\]
are also weak equivalences since they are mutually inverse isomorphisms in the homotopy category.\\
The zig-zag of weak equivalences between $P(c_0,c_1,\dots, c_n; c)$ and $P(d_0,c_1,\dots, c_n; d)$  is given by
 \[
  \morp{\alpha'_* (\beta,\id_{c_1},\dots, id_{c_n})^*}{P_f(c_0,c_1,\dots, c_n; c)}{P_f(d_0,c_1,\dots, c_n; d)}
 \]
concatenated with the weak equivalences $\arro{\sim}{P(c_0,c_1,\dots, c_n; c)}{P_f(c_0,c_1,\dots, c_n; c)}$ and $\arro{\sim}{P(d_0,c_1,\dots, c_n; d)}{P_f(d_0,c_1,\dots, c_n; d)}$.

Every morphism of $\Enr{\Mm}$operads $\morp{g}{P}{Q}$ induces a morphism between some fibrant replacements $P_f$ and $Q_f$
\[
 \diagc{P}{P_f}{g^*(Q)}{g^*(Q_f)}{\trf{g}}{\sim}{\trf{g}_f}{\sim}{} \diagc{P}{P_f}{Q}{Q_f}{f}{\sim}{g_f}{\sim}{}\ .
\]

The following diagram commutes (here $g(\alpha')$ is the composition of $\alpha'$ with $\morp{g_f}{P_f(c,d)}{Q_f(g(c),g(d))}$, etc.) 
\[
 \xymatrix{P_f(c_0,c_1,\dots, c_n; c) \ar[d]_{g_f} \ar[r]^{\alpha'_* (\beta,\id_{c_1}\dots, id_{c_n})^*} & P_f(d_0,c_1,\dots, c_n; d) \ar[d]^{g_f}\\
Q_f(g(c_0),g(c_1),\dots, g(c_n); g(c)) \ar[r]_*!/d5pt/{g(\alpha')_* (g(\beta),\id_{c_1},\dots, id_{c_n})^*} & Q_f(g(d_0),g(c_1),\dots, g(c_n); g(d))
  }
\]

and it is easy to check that there is a functorially related zig-zag of weak equivalences between $Q(g(c_0),g(c_1),\dots, g(c_n); g(c))$ and $Q(g(d_0),g(c_1),\dots, g(c_n); g(d))$.
\end{proof}

\begin{prop}\label{prop2.12}
If $\Mm$ is right proper, the class of weak equivalences in $\EOper{\Mm}$ ($\ENSOper{\Mm}$, $\EROper{\Mm}$) satisfies the 2-out-of-3 property. 
\end{prop}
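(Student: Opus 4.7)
The plan is to verify the 2-out-of-3 property separately for the two constituents of being a weak equivalence: the local weak equivalence property (fibrewise in $\Mm$) and essential surjectivity on colours. The local part reduces at each signature to 2-out-of-3 in $\Mm$, while the essential surjectivity part uses the lemmas just established about equivalence, virtual equivalence and homotopy equivalence of colours. Right properness of $\Mm$ enters through Lemma \ref{lemma2.9}, which is what allows us to pass back and forth between equivalence and virtual equivalence freely.

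Consider $\morp{f}{P}{Q}$ and $\morp{g}{Q}{R}$. If $f$ and $g$ are weak equivalences, then essential surjectivity of $gf$ follows by transitivity of equivalence (Lemma \ref{lemma2.7}), using the fact that a morphism of operads carries equivalences to equivalences, while local weak equivalence follows pointwise from 2-out-of-3 in $\Mm$ applied to $P(s)\to Q(f(s))\to R(gf(s))$. If $gf$ and $g$ are weak equivalences, the local weak equivalence of $f$ is again by 2-out-of-3 in $\Mm$ at each signature; and for essential surjectivity of $f$, given $b\in\Cl{Q}$, one picks $a\in\Cl{P}$ with $gf(a)$ equivalent to $g(b)$ in $R$, then applies Lemma \ref{lemma2.8} (since $g$ is a local weak equivalence, it reflects virtual equivalence) followed by Lemma \ref{lemma2.9} (right properness) to conclude $f(a)\sim b$ in $Q$.

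The interesting case, which I expect to be the main obstacle, is showing that if $gf$ and $f$ are weak equivalences then so is $g$. Essential surjectivity is straightforward: for $c\in\Cl{R}$ pick $a\in\Cl{P}$ with $gf(a)$ equivalent to $c$, and take $b=f(a)$. The real issue is local weak equivalence of $g$: we have direct control on $P(s)\to Q(f(s))\to R(gf(s))$ for signatures $s\in \Sqofc{\Cl{P}}$, from which 2-out-of-3 in $\Mm$ does give $Q(f(s))\to R(gf(s))$ weakly equivalent; but a general $t=(b_1,\dots,b_n;b)\in\Sqofc{\Cl{Q}}$ need not lie in the image of $f$ on colours. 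To bridge this gap I would use essential surjectivity of $f$ to pick $a_i,a\in\Cl{P}$ with $f(a_i)\sim b_i$ and $f(a)\sim b$, set $s=(a_1,\dots,a_n;a)$, and invoke Lemma \ref{lemma2.11}: it produces a zig-zag of weak equivalences in $\Mm$ connecting $Q(t)$ to $Q(f(s))$ together with a functorially related zig-zag connecting $R(g(t))$ to $R(gf(s))$, compatible with the maps induced by $g$. Since we already know $Q(f(s))\to R(gf(s))$ is a weak equivalence, 2-out-of-3 in $\Mm$ applied to the resulting commutative ladder propagates the weak equivalence to $Q(t)\to R(g(t))$.

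The non-symmetric and reduced cases proceed verbatim, substituting $\ENSOper{\Mm}$ or $\EROper{\Mm}$ and the corresponding fibres; all invoked lemmas (\ref{lemma2.7}--\ref{lemma2.11}) were already stated in those settings.
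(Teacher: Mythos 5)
Your proof is correct and follows essentially the same route as the paper's: the same three-case split, with the same hard case ($f$ and $gf$ weak equivalences implying $g$ is one) resolved via the functorially related zig-zags of Lemma \ref{lemma2.11} together with 2-out-of-3 in $\Mm$, and essential surjectivity in the remaining case recovered from Lemmas \ref{lemma2.8} and \ref{lemma2.9}, which is exactly where right properness enters. The only point you gloss slightly is that essential surjectivity of $f$ produces colours that are \emph{equivalent}, which must first be upgraded to homotopy equivalence via Lemma \ref{lemma2.10} before Lemma \ref{lemma2.11} applies --- precisely the step the paper makes explicit.
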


\begin{proof}
 Let $\morp{f}{P}{Q}$ and $\morp{g}{Q}{R}$ be morphisms of $\Enr{\Mm}$ operads.

We have to prove that if two among $f$, $g$ and $fg$ are weak equivalences (i.e. essentially surjective local weak equivalences) then so is the remaining one. We are going to prove the three cases separately:

\begin{itemize}
 \item Assume $f$ and $g$ are weak equivalences. It is easy to check that $gf$ is a local weak equivalence.
 The fact that $gf$ is essentially surjective follows from the fact that $j^*(gf)$ is (\emph{cf.} \cite[Proposition 2.13]{BM12} ); 
\item Assume $f$ and $gf$ are weak equivalences. It is immediate to check that $g$ is essentially surjective. What is left to check is that $g$ is a local weak equivalence.
Given $d_0,d_1,\dots, d_n, d \in \Cl{Q}$ since $f$ is essentially surjective we can pick $c_0,c_1,\dots, c_n, c \in \Cl{Q}$ such that $f(c_i)$ is equivalent to $d_i$ for every $i\in \{0,\dots,n\}$ and $f(c)$ is equivalent to
$d$. It follows from Lemma \ref{lemma2.10} that $f(c_i)\sim d_i$ for every $i\in \{0,\dots,n\}$ and $f(c)\sim d$ hence Lemma \ref{lemma2.11} gives us a zig-zag of weak equivalences (in $\Mm$) between
$Q(f(c_0),\dots,f(c_n); f(c))$ and $Q(d_0,\dots,d_n; d)$. We get the following commutative diagrams:
\[
\xymatrix@C=3cm{P(Q(c_0,\dots, c_1; c) \ar[dr]_*!/l10pt/{gf(c_0,\dots, c_1; c)} \ar[r]^{\!\!f(c_0,\dots, c_1; c)} &  
          Q(f(c_0),\dots, f(c_n); f(c)) \ar[d]^{g(f(c_0),\dots, f(c_n); f(c))} \\
          &
          R(gf(d_0),\dots, gf(d_n); gf(d)) 
         }
\]

\[ 
\xymatrix{Q(f(c_0),\dots, f(c_n); f(c)) \ar[d]^{g(f(c_0),\dots, f(c_n); f(c))} \ar @{-} [r]^{\sim} &
          Q(d_0,\dots, d_n; d) \ar[d]^{g(d_0,\dots, d_n; d)} \\
          R(gf(d_0),\dots, gf(d_n); gf(d)) \ar @{-} [r]^{\sim} &
          R(g(d_0),\dots, g(d_n); g(d))
          }
\]
(the zig-zag of weak equivalences on the bottom row of the last diagram is obtained from the one on the top row applying Lemma \ref{lemma2.11}).\\
The vertical arrow in the first diagram is a weak equivalence since the other two maps in the diagram are so (by assumption). Applying the 2-out-of-3 property of weak equivalences
in $\Mm$ in the second diagram we get that $g(d_0,\dots, d_n; d)$ is a weak equivalence;

\item Assume $g$ and $gf$ are weak equivalences. It is immediate to verify that $f$ is a local weak equivalence too. It follows from Lemmas \ref{lemma2.8} and
\ref{lemma2.9} that $g$ reflects equivalences of objects thus the essentially surjectivity of $f$ follows from the one of $gf$.
\end{itemize}
\end{proof}

\subsection{Generating (trivial) cofibrations}

Lemma \ref{lemma2.4} implies that the class of the trivial fibrations (as defined in \ref{def.mod}) in $\EOper{\Mm}$ ($\ENSOper{\Mm}$, $\EROper{\Mm}$) is characterized by the right lifting property respect to:
\begin{equation}\label{eq.gen.c}
 \mathcal{I}=I_{loc}\cup \{\arr{\emptyset}{j^*(\obCat)}\}
\end{equation}
where $\obCat$ is the $\Mm$-category representing a single object (as in Definition \ref{amalgadef}) and $I_{loc}$ is defined as in Remark \ref{rmk2}. The morphisms which are surjective on the colours
are exactly the ones having the right lifting property respect to the last map. The set $\mathcal{I}$ is therefore a good candidate for the set of generating cofibrations for the model structure we want to establish.

If there exists a set $\mathfrak{G}$ of generating $\Mm$-intervals we also have a good candidate for the set of generating trivial cofibrations, that is
\begin{equation}\label{eq.gen.tc}
 \mathcal{J}=J_{loc}\cup \{ \morp{j_!(i_1)}{j_!(\obCat)}{j_!(\mathbb{G})}\mid \mathbb{G}\in \mathfrak{G}\}\ .
\end{equation}
In fact, by definition, a morphism $f$ in $\EOper{\Mm}$ is path-lifting if and only if it has the right lifting property respect to $\morp{j_!(i_1)}{j_!(\obCat)}{j_!(\mathbb{H})}$
for every $\Mm$-interval $\mathbb{H}$, but if $f$ is locally fibrant it is sufficient to check it only for the $j_!(i_1)$'s coming from generating $\Mm$-intervals. In fact, suppose that $\mathbb{H}$ is a retract of a trivial extension of a generating $\Mm$-interval $\mathcal{G}$ and that the following diagram commutes:
\[
 \xymatrix
{ & & j_!(\obCat) \ar[dll]_{j_!(i_1)} \ar[d]^{j_!(i_1)} \ar[r]^{a} & A\ar[d]^{f}\\
  j_!(\mathbb{G}) \ar@{-->}[urrr]^{l} \ar@{>->}[r]^{e} & j_!(\mathbb{K}) \ar@<0.5ex>[r]^{r} & j_!(\mathbb{H}) \ar@<0.5ex>[l]^{i} \ar[r]^{h} & B 
}\ .
\]
If $l$ exists, it can be extended to a map $\morp{l'}{K}{A}$ since $e$ is a trivial cofibration (in $\EOperfc{\Mm}{\{0,1\}}$ and $f$ is locally fibrant. The map $l'i$ gives the desired lifting for $\mathbb{H}$.
 
We want now to be sure that the domains of $\mathcal{I}$ (resp. $\mathcal{J}$) are small with respect to $\mathcal{I}$ (resp. $\mathcal{J}$), so that we can apply the small object argument. We now have to distinguish between the case of $\EOper{\Mm}$ and the ones of $\ENSOper{\Mm}$ and $\EROper{\Mm}$. 

\begin{lemma} \label{ess.surj.lemma}
Let $\Mm$ be a model category admitting transfer for categories.
In $\ECat{\Mm}$ the transfinite composition of essentially surjective morphisms is essentially surjective.
\end{lemma}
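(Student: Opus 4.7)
The plan is to proceed by transfinite induction along the sequence defining the transfinite composition. Let $F \colon A_0 \to A_\lambda$ be the transfinite composition of essentially surjective $\Mm$-functors $F_\alpha \colon A_\alpha \to A_{\alpha+1}$ for $\alpha < \lambda$, and write $F_{\alpha,\beta} \colon A_\alpha \to A_\beta$ for the composite along the sequence. The first reduction is to note that the functor $\Obf \colon \ECat{\Mm} \to \Set$ commutes with transfinite compositions, so every object $b \in \Obf(A_\lambda)$ is of the form $F_{\beta,\lambda}(b_\beta)$ for some $\beta < \lambda$ and some $b_\beta \in \Obf(A_\beta)$.

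I would then prove by transfinite induction on $\alpha \leq \lambda$ the statement $(\ast_\alpha)$: for every $b \in \Obf(A_\alpha)$ there exist $a \in \Obf(A_0)$, an $\Mm$-interval $\mathbb{H}$, and a morphism $h \colon \mathbb{H} \to A_\alpha$ in $\ECat{\Mm}$ with $h(0) = F_{0,\alpha}(a)$ and $h(1) = b$. The base case $\alpha = 0$ is immediate, taking $\mathbb{H} = \mathbb{I}$ and $h$ the constant map at $a = b$. The limit case follows from the reduction above applied inside the sequence: $b$ already comes from an earlier stage $A_\beta$, and the interval furnished by the induction hypothesis at $\beta$ is pushed forward to $A_\alpha$. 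The successor case $\alpha = \beta+1$ is the substantive one: essential surjectivity of $F_\beta$ supplies an $\Mm$-interval $\mathbb{H}'$ and a map $h' \colon \mathbb{H}' \to A_{\beta+1}$ witnessing an equivalence between $F_\beta(a')$ and $b$ for some $a' \in \Obf(A_\beta)$, while $(\ast_\beta)$ supplies $a \in \Obf(A_0)$ and $k \colon \mathbb{K} \to A_\beta$ witnessing an equivalence between $F_{0,\beta}(a)$ and $a'$. Pushing $k$ forward by $F_\beta$ and amalgamating with $h'$ along the shared vertex $F_\beta(a')$ yields a morphism out of the amalgamation $\mathbb{H}' \ast \mathbb{K}$ of Definition \ref{amalgadef}, hitting $F_{0,\beta+1}(a)$ and $b$ at its two endpoints.

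The main obstacle is realising this concatenation as a map from a genuine $\Mm$-interval: the amalgamation $\mathbb{H}' \ast \mathbb{K}$ is weakly equivalent to $\mathbb{I}$, since composing two equivalences yields an equivalence, but it need not be cofibrant in $\ECatfc{\Mm}{\{0,1\}}$. I would resolve this by choosing a cofibrant replacement $\tilde{\mathbb{H}} \twoheadrightarrow \mathbb{H}' \ast \mathbb{K}$ in the transferred model structure on $\ECatfc{\Mm}{\{0,1\}}$; since the replacement takes place in the fiber, $\tilde{\mathbb{H}}$ has the same two fixed objects and is again an $\Mm$-interval, so precomposing the map to $A_{\beta+1}$ with the trivial fibration produces the required witness for $(\ast_{\beta+1})$. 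Finally, applying $(\ast_\beta)$ to the representative $b_\beta$ provided by the initial reduction and pushing the resulting interval forward to $A_\lambda$ exhibits the arbitrary $b \in \Obf(A_\lambda)$ as equivalent to the image under $F$ of an object of $\Obf(A_0)$, establishing that $F$ is essentially surjective.
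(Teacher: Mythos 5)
Your proposal is correct and follows essentially the same route as the paper's proof: transfinite induction on the indexing ordinal, smallness of $\obCat$ in $\ECat{\Mm}$ to reduce the limit case to an earlier stage, and, at successor stages, amalgamation of the two witnessing intervals followed by cofibrant replacement in $\ECatfc{\Mm}{\{0,1\}}$ to produce a genuine $\Mm$-interval. The one step you should tighten is the claim that $\mathbb{H}'\ast\mathbb{K}$ is weakly equivalent to $\mathbb{I}$: this does not follow formally from ``composing two equivalences yields an equivalence,'' because the pushout of Definition \ref{amalgadef} is taken along the inclusions $\obCat\to\mathbb{H}$ and is not a priori a homotopy pushout; the paper invokes precisely \cite[Lemma 1.16]{BM12} here (which in turn rests on the Interval Cofibrancy Theorem 1.15 of \cite{BM12}), and your argument needs that same input. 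One cosmetic slip: in the base case $\mathbb{I}$ itself need not be cofibrant, hence need not be an $\Mm$-interval, so the constant witness should instead be any $\Mm$-interval $\mathbb{H}$ mapped to the target through $\mathbb{H}\xrightarrow{\sim}\mathbb{I}\to\obCat\xrightarrow{b}A$.
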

\begin{proof}

Let $\alpha$ be an ordinal (regarded as a category) and let $\morp{F}{\alpha}{\ECat{\Mm}}$ be a functor defining a transfinite composition of essentially surjective morphisms.
Let $A=F(0)$, $B=\varinjlim_{} F$ and $\morp{f}{A}{B}$ be the transfinite composition obtained by $F$.
Given an object $\morp{b}{\obCat}{B}$ we want to find an object $\morp{a}{\obCat}{A}$, an interval $\mathbb{H}$ and morphism $\morp{h}{\mathbb{H}}
{B}$ such that $h(0)=f(a)$ and $h(1)=b$. 
 
The proof will proceed by transfinite induction on $\alpha$. 

If $\alpha$ is the ordinal with one object (as a category) then there is nothing to prove (the transfinite composition is just the identity).

Suppose $\alpha$ is a limit ordinal and the statement is true for any $\beta < \alpha$.

In $\ECat{\Mm}$ the object $\obCat$ is small so $b$ factors through some $F(\beta)$ where $\beta$ is a successor ordinal smaller then 
$\alpha$, i.e. there is $b'\in F(\beta)$ such that $f_{\beta\alpha}(b')=b$, where $f_{\beta\alpha}$ is the canonical morphism from $F(\beta)$ to $B$. 

$F$ restricted to $\{ \gamma \leq \beta\}$ defines a $\beta$-transfinite composition $f_{\beta}$ of essentially surjective morphisms, which is essentially surjective by inductive hypothesis.

We can then find an interval $\mathbb{H}$, an object $a$ in $A$ and a morphism $\morp{h'}{\mathbb{H}}{F(\beta)}$ such that $h'(0)=f_{\beta}(a)$ and $h'(1)=b'$. 
The desired morphism is then $h=f_{\beta \alpha} \ h'$. 

For the successor case suppose $\alpha=\beta +1$ and that the statement is true for every transfinite composition indexed by $\beta$.
Factor $f$ through $F(\beta)$:
\[
 \xymatrix{A\ar@/_1pc/[rr]_{f} \ar[r]^{f_{\beta}} & F(\beta) \ar[r]^{f_{\beta \alpha}} & F(\alpha)=B}
\]
By hypothesis $f_{\alpha \beta}$ is essentially surjective so we can find an object $b'\in F(\beta)$, an interval $\mathbb{H}'$ and a morphism 
$\morp{h'}{\mathbb{H}'}{B}$ such that $h'(0)=f_{\alpha \beta}(b')$ and $h'(1)=b$.

On the other hand $f_{\beta}$ is also essentially surjective by inductive hypothesis so we can find $a\in A$, an interval $\mathbb{H}''$ and a morphism 
$\morp{h''}{\mathbb{H}''}{F(\beta)}$ such that $h''(0)=f_{\beta}(a)$ and $h''(1)=b'$.

Consider the amalgamation $\mathbb{H}''* \mathbb{H}'$, since $f_{\beta \alpha} h''(1)= f_{\beta \alpha} h''(1) =h'(0)$ we get a morphism
$\morp{h}{\mathbb{H}''* \mathbb{H}'}{B}$ such that $h(0)=f_{\beta \alpha} h''(0)=f(a)$ and $h(1)=h'(1)=b$ by the universal property of push-outs. Consider a cofibrant replacement $\morp{\iota}{\overline{\mathbb{H}''* \mathbb{H}'}}{\mathbb{H}''* \mathbb{H}'}$ in $\ECatfc{\Mm}{\{0,1\}}$. By Lemma 1.16 \cite{BM12} the $\Enr{\Mm}$category  $\overline{\mathbb{H}''* \mathbb{H}'}$ is an interval and  $\iota h(0)=f(a)$ and $\iota h(1)=b$. The element $b$ was arbitrarily chosen, so this proves that $f$ is essentially surjective. 

\end{proof}

\begin{cor}\label{ess.surj.cor}
Let $\Mm$ be a monoidal model category which admits transfer for operads. In $\EOper{\Mm}$ the transfinite composition of essentially surjective morphisms is essentially surjective.
\end{cor}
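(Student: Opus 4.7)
The plan is to reduce the corollary to Lemma \ref{ess.surj.lemma} by applying the functor $j^{*}\colon \EOper{\Mm} \to \ECat{\Mm}$. Since by Definition \ref{defmod} essential surjectivity of a morphism in $\EOper{\Mm}$ is, by fiat, essential surjectivity of its image under $j^{*}$, it is enough to show that $j^{*}$ sends the given transfinite composition $F\colon \alpha \to \EOper{\Mm}$ to a transfinite composition $j^{*}\circ F$ in $\ECat{\Mm}$. Each transition of $j^{*}\circ F$ is then essentially surjective by hypothesis on $F$, so Lemma \ref{ess.surj.lemma} applied to $j^{*}\circ F$ forces $j^{*}(f)$, and hence $f$, to be essentially surjective.

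The main technical step is therefore to verify that $j^{*}$ preserves transfinite compositions. First, $\Clf\colon \EOper{\Mm} \to \Set$ preserves all small colimits: a right adjoint is given by sending $X \in \Set$ to the indiscrete operad on $X$, namely the operad with colour set $X$ taking the terminal object of $\Mm$ as value at every signature (maps from any operad $P$ into such an object are uniquely determined by the induced function on colours). Writing $B = \operatorname{colim}_{\beta} F(\beta)$, this yields $\Cl{B} = \operatorname{colim}_{\beta} \Cl{F(\beta)}$. Now base-change every $F(\beta)$ along the canonical map $\Cl{F(\beta)} \to \Cl{B}$ into the fibre $\EOperfc{\Mm}{\Cl{B}}$; by \eqref{eq.ff.op} this fibre is monadic over $\Mm^{\Sqofc{\Cl{B}}}$ via a finitary monad, so its filtered colimits are computed signature-wise in $\Mm$. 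Restricting this description to arity-$1$ signatures is exactly the statement that $j^{*}$ takes the colimit cocone for $F$ in $\EOper{\Mm}$ to the colimit cocone for $j^{*}\circ F$ in $\ECat{\Mm}$.

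Once this is established the conclusion is immediate by Lemma \ref{ess.surj.lemma}. The point at which I expect care to be needed is precisely this fibrewise reduction: since the morphisms along the diagram $F$ change the colour set, some bookkeeping is required to rewrite the given transfinite composition as a transfinite composition inside the single fibre $\EOperfc{\Mm}{\Cl{B}}$ before appealing to finitariness of $\Opop{\Cl{B}}$. Everything else — the essential surjectivity of each transition of $j^{*}\circ F$ coming for free from the definition, and the invocation of Lemma \ref{ess.surj.lemma} — is formal.
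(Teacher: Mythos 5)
Your proposal is correct and takes essentially the same route as the paper: the paper's own proof of Corollary \ref{ess.surj.cor} is precisely your first paragraph — essential surjectivity in $\EOper{\Mm}$ is \emph{defined} via $j^{*}$ (Definition \ref{defmod}), so one invokes Lemma \ref{ess.surj.lemma} — with the preservation of transfinite compositions by $j^{*}$ left entirely implicit. Your second paragraph supplies exactly that implicit step, and the ``bookkeeping'' you flag is what the paper's Appendix \ref{sez.colim} provides (the bifibration description of colimits, Lemma \ref{lemma.adj} and formula (\ref{eq.filt.comp})); the one point to handle carefully there is that $j^{*}$ does not commute term-wise with the direct images $l_{\beta !}$ when the maps of $F$ identify colours (an operadic pushforward can create unary operations out of higher-arity and nullary ones), so the arity-$1$ restriction is best justified via the signature-wise filtered-colimit formula (\ref{eq.filt.comp}), which applies verbatim on both the operad and the category side, rather than by comparing the two fibrewise direct-image diagrams directly.
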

\begin{proof}
 It follows directly from the definition of essential surjectivity in $\EOper{\Mm}$ and Lemma \ref{ess.surj.lemma}.
\end{proof}

The lemma that follows is the enriched version of \cite[Lemma 1.29]{CM11}:

\begin{lemma}\label{lemma.push2}
Consider a full embedding $\morp{i}{A}{B}$ in $\ECat{\Mm}$, in which $A$ has one object $0$ and $B$ has as set of objects $\{ 0,1 \}$ such that $i(0)=0$.
For every push-out in $\EOper{\Mm}$ along $j_!(i)$:
\[
 \diagc{j_!(A)}{P}{j_!(B)}{Q}{j_!(i)}{a}{v}{b}{}
\]
the morphism of operads $v$ is fully faithful.
\end{lemma}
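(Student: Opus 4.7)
The goal is to show that the structural component $\trf{v}\colon P \to v^*(Q)$ is an isomorphism in $\EOperfc{\Mm}{\Cl{P}}$; by Definition \ref{defmod} this is exactly what it means for $v$ to be fully faithful.

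First I would analyze the pushout on colour sets. Since $\Cl{j_!(A)} = \{0\}$, $\Cl{j_!(B)} = \{0,1\}$ and $\Cl{P} = C$, the pushout in $\Set$ is $D := C \sqcup \{1\}$, and the colour map $\Cl{v}$ is the inclusion $\alpha\colon C \hookrightarrow D$. Using the bifibration $\morp{\Clf}{\EOper{\Mm}}{\Set}$, the pushout in $\EOper{\Mm}$ is computed by pushing out colour sets first and then taking the pushout in the fibre over $D$; hence
\[
Q \;\cong\; \alpha_!(P)\,\sqcup_{\delta_!(j_!(A))}\,\beta_!(j_!(B))
\]
in $\EOperfc{\Mm}{D}$, where $\delta\colon \{0\} \to D$ and $\beta\colon \{0,1\} \hookrightarrow D$ are the canonical maps. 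The claim is then reduced to showing that for every signature $s \in \Sqofc{C}$ the canonical morphism $P(s) \to Q(s)$ is an isomorphism in $\Mm$.

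Next I would compute this fibre pushout using the monadic free--forgetful adjunction (\ref{eq.ff.op2}) for $\Opop{D}$. Since this monad is finitary, the pushout is realised as a transfinite composition of elementary free steps attaching $\beta_!(j_!(B))$-data to $\alpha_!(P)$ modulo the $\delta_!(j_!(A))$-relations, and can be presented by trees whose internal vertices are labelled by operations of $P$ or of $j_!(B)$, whose edges are coloured by elements of $D$, and in which any vertex labelled by the $i$-image of an $A$-operation is identified with the corresponding image under $a$ in $P$. The decisive step is to exploit the full-embedding hypothesis: $i$ being full means $B(0,0) = A(0,0)$, so the only operations of $j_!(B)$ with signature in $\Sqofc{\{0\}}$ lie in $j_!(A)$ and are identified with operations of $P$ by the quotienting relations; any remaining operation of $j_!(B)$ carries the fresh colour $1$ on at least one edge, and since no operation of $P$ involves $1$, such an edge can never be matched so as to form a tree whose overall signature lies in $\Sqofc{C}$. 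Therefore every operation contributing to $Q(s)$ with $s \in \Sqofc{C}$ comes uniquely from $P(s)$, yielding $\alpha^*(Q) \cong P$ and hence $\trf{v}$ is an isomorphism.

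The main obstacle will be making the tree-level bookkeeping precise: one needs an explicit stage-by-stage description of the filtered colimit realising the pushout in $\Alg{\Opop{D}}{\Mm}$, of the sort used in \cite{BM05} and \cite{BB13}, in order to separate the contribution of operations of $j_!(B)$ involving the colour $1$ from those not involving it, and to verify that the $\delta_!(j_!(A))$-relations exactly cancel the redundant $B(0,0)$-contributions against their images in $P$. Once this combinatorial analysis is carried out the full faithfulness of $v$ is immediate.
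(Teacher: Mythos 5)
There is a genuine gap, and it sits exactly at the step you call decisive. Your claim that any operation of $j_!(B)$ carrying the fresh colour $1$ ``can never be matched so as to form a tree whose overall signature lies in $\Sqofc{C}$'' is false: the new colour can perfectly well occur on \emph{inner} edges of such trees. Take $\beta\in B(0;1)$ and $\gamma\in B(1;0)$ (both exist generically, since $B(0;1)$ and $B(1;0)$ are arbitrary objects of $\Mm$); the two-vertex tree with these labels has its leaf and root both coloured $0$, hence overall signature $(0;0)\in\Sqofc{C}$, while its inner edge is coloured $1$. So trees routing through the new colour do contribute to $Q(s)$ for $s\in\Sqofc{C}$, and the conclusion ``every operation contributing to $Q(s)$ comes uniquely from $P(s)$'' does not follow from your colour-counting argument. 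The correct mechanism is not that these contributions are absent but that they are \emph{absorbed}: inner face maps contract each inner edge joining two $B$-marked vertices (this is composition in $B$, e.g. $B(1;0)\otimes B(0;1)\to B(0;0)$), after which every remaining $B$-vertex has arity in $\Sqofc{\Cl{A}}$, and only at that point does fullness of $i$ (i.e.\ $B(0,0)=A(0,0)$) let you re-mark those vertices by $A$ and identify them with operations of $P$. Your proposal uses fullness only for the first, trivial identification and never confronts the through-$1$ composites, which is precisely the hard part.

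This is also where your sketch diverges from the paper. The paper proves the lemma as a special case of Proposition \ref{prop.ap.ff}: it reduces, as you do, to a pushout in the fibre $\EOperfc{\Mm}{\Cl{Q}}$, but then uses the explicit colimit description of fibre pushouts over the category $\trees{C}_S$ of ordered push-out trees (Proposition \ref{exp.push}), restricts to the trees with nonempty contribution, formally inverts the re-marking morphisms made invertible by fullness (kinds (\ref{mkind1}) and (\ref{mkind2}) in Appendix \ref{App.B}), and shows the $P$-marked corolla $S_P$ is \emph{initial} in the resulting category, so the colimit computing $Q(S)$ is $P(S)$. Note also that in the enriched setting there is no underlying set of operations, so even a corrected combinatorial count must be packaged as a cofinality/initiality statement about the indexing category of the colimit, exactly as the appendix does; your planned ``stage-by-stage'' free-extension filtration could in principle be made to work, but as written the argument fails at the first tree with an inner edge of colour $1$.
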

\begin{proof} This is a particular case of Proposition \ref{prop.ap.ff}.

\end{proof}

\begin{lemma}\label{cell1}
 Suppose $\Mm$ is compactly generated with cofibrant monoidal unit and satisfies the monoid axiom, then every relative $\mathcal{I}$-cell in $\ENSOper{\Mm}$ ($\EROper{\Mm}$) is a local $\otimes$-cofibration.
\end{lemma}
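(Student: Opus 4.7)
The plan is to establish that the class $\mathcal{S}$ of local $\otimes$-cofibrations in $\ENSOper{\Mm}$ (respectively $\EROper{\Mm}$) is weakly saturated once one knows that every pushout of a map in $\mathcal{I}$ belongs to it. Since the operad monads $\Opnsop{C}$ and $\Oprop{C}$ are finitary, filtered colimits of operads are computed signature-wise on the underlying multigraphs; together with the saturation of the class of $\otimes$-cofibrations in $\Mm$, this gives closure of $\mathcal{S}$ under transfinite composition for free. Thus it remains only to analyse pushouts along the two types of generators in $\mathcal{I}$.

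For pushouts along a morphism $C_n(i)$ with $i\in I$, I would observe that $C_n(i)\colon C_n(X)\to C_n(Y)$ is an isomorphism on colours, so the right leg $v$ of any pushout is as well. By Remark \ref{rmk1}, $\trf{v}$ is the pushout in $\ENSOperfc{\Mm}{\Cl{A}}$ (respectively $\EROperfc{\Mm}{\Cl{A}}$) of $\alpha_!C_n(i)\cong F_{\Opnsop{\Cl{A}}}\bigl(\iota_t(i)\bigr)$ along $\trc{\alpha}$, where $t\in\Sqofc{\Cl{A}}$ is the image of $s_n$ under the colour map $\alpha\colon [n]\to\Cl{A}$ extracted from the attaching morphism. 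Hence we are pushing out the free map on an element of the generating set of cofibrations of $\Mm^{\Sqofc{\Cl{A}}}$. By Proposition \ref{compstone.alg}, the tame polynomial monad associated to $\Opnsop{\Cl{A}}$ (resp.~$\Oprop{\Cl{A}}$) is $\otimes$-admissible in $\Mm$, which means precisely that $U_{\Opnsop{\Cl{A}}}(\trf{v})$ is a local $\otimes$-cofibration in $\Mm^{\Sqofc{\Cl{A}}}$, as required.

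For pushouts along $\emptyset\to j^*(\obCat)$, I would give a direct analysis of the coproduct $P\sqcup j^*(\obCat)$. Since the only operation in $j^*(\obCat)$ is the identity $u_\ast\colon I\to I$ and no operation of $P$ involves the new colour, no mixed-colour operation can be produced in the free product; on every signature with colours in $\Cl{P}$ the structural map of $P\to P\sqcup j^*(\obCat)$ is the identity, on $(\ast;\ast)$ it is $\emptyset\to I$, and on every remaining new signature it is $\emptyset\to\emptyset$. Cofibrancy of the monoidal unit $I$ ensures that all of these are $\otimes$-cofibrations, so the morphism is a local $\otimes$-cofibration. For the reduced case the argument is identical, the unary identity being admissible in $\EROper{\Mm}$.

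The main technical point is the identification carried out in the first step: checking that the pushout of $C_n(i)$ in $\ENSOper{\Mm}$ can really be realised as a pushout of $F_{\Opnsop{\Cl{A}}}\bigl(\iota_t(i)\bigr)$ inside the fibre over $\Cl{A}$, and that this free map is exactly of the form to which the $\otimes$-admissibility statement of Proposition \ref{compstone.alg} applies. Everything else is essentially bookkeeping, but this step is where the tameness of the polynomial monads $\Opnsop{C}$ and $\Oprop{C}$ is indispensable; without it one would not be able to appeal to Proposition \ref{compstone.alg} and the argument would collapse, which is also why the analogous statement fails for the full symmetric case treated elsewhere in the paper.
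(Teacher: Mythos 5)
Your proof is correct and follows essentially the same route as the paper's: reduce to pushouts of the generators via closure of local $\otimes$-cofibrations under transfinite composition, compute the pushout of $C_n(i)$ in the fibre $\ENSOperfc{\Mm}{\Cl{A}}$ (resp.\ $\EROperfc{\Mm}{\Cl{A}}$) as a pushout of a free map on a generating cofibration and invoke the $\otimes$-admissibility of Proposition \ref{compstone.alg}, and treat $\emptyset\to j^*(\obCat)$ directly using cofibrancy of the unit. Your explicit description of the coproduct with $j^*(\obCat)$ just spells out what the paper calls ``immediate,'' and the fibre-pushout identification you attribute to Remark \ref{rmk1} is really the colimit description of Appendix \ref{sez.colim}, which is what the paper cites.
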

\begin{proof}
As stated in Proposition \ref{compstone}, $\Mm$ admits transfer for reduced operads and non-symmetric operads. Since local $\otimes$-cofibrations are closed under transfinite composition
it is sufficient to show that in any push-out square
\begin{equation}\label{dgc}
 \diagc{A}{X}{B}{Y}{i}{f}{j}{g}{}
\end{equation}
if $i$ is in $\mathcal{I}$ then $j$ is a local $\otimes$-cofibration.
Since the unit is supposed to be cofibrant it is immediate to check that a push-out of $\arr{\emptyset}{j^*(\obCat)}$ is a local cofibration (hence a local $\otimes$-cofibration).
Suppose now that $i\in I_{loc}$, i.e. $i=C_n(k)$ for some $\morp{k}{a}{b}$ in $I$ (where $I$ is the set of generating cofibrations of $\Mm$); then the statement follows from Proposition \ref{compstone.alg}. In fact, in this case the push-out (\ref{dgc}) can be obtained from the following push-out diagram in $\ENSOperfc{\Mm}{C}$ (resp. $\EROperfc{\Mm}{C}$) where $C=\Cl{X}$ 
\[
 \diagc{f_!C_n(a)}{X}{f_!C_n(b)}{Y}{f_!(i)}{\trc{f}}{j}{\trc{g}}{}
\]
(see Appendix \ref{sez.colim} for a description of colimits in these categories).
Since $f_!(i)$ is a cofibration in $\ENSOperfc{\Mm}{C}$ (resp. $\EROperfc{\Mm}{C}$), $g$ is also a cofibration in the same category, hence a local $\otimes$-cofibration from Proposition \ref{compstone.alg}.  
\end{proof}

\begin{lemma}\label{cell2}
Suppose $\Mm$ is a cofibrantly generated monoidal category with cofibrant unit:
\begin{enumerate}
 \item\label{cell2-1} if $\Mm$ admits transfer for operads, a set of generating $\Mm$-intervals $\mathfrak{G}$ and the class of weak equivalences is closed
under transfinite composition, then every relative $\mathcal{J}$-cell in $\EOper{\Mm}$ is a weak equivalence.
\item\label{cell2-1b} if $\Mm$ is $K$-compactly generated, admits $K$-transfer for operads and a set of generating $\Mm$-intervals $\mathfrak{G}$
, then every relative $\mathcal{J}$-cell in $\EOper{\Mm}$ is a weak equivalence which and a local $K$-morphism (Definition \ref{d.locK} and \ref{defmod}).
\item \label{cell2-2} if $\Mm$ is compactly generated, satisfies the monoid axiom and has a set of generating $\Mm$-intervals $\mathfrak{G}$, then every relative 
$\mathcal{J}$-cell in $\ENSOper{\Mm}$ and $\EROper{\Mm}$ is an essentially surjective local trivial $\otimes$-cofibration (in particular it is a weak equivalence). 
\end{enumerate}
\end{lemma}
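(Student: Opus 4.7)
My plan is to split a relative $\mathcal{J}$-cell into pushouts along the two kinds of generators in $\mathcal{J}$, analyze each kind in isolation, and then combine the results using closure under transfinite composition. Let $f\colon P\to P_\infty$ be a relative $\mathcal{J}$-cell, written as a transfinite composite of pushouts of maps in $J_{loc}$ (type (a)) and of maps $j_!(i_1)\colon j_!(\obCat)\to j_!(\mathbb{G})$ with $\mathbb{G}\in\mathfrak{G}$ (type (b)). At each successor stage the structure map will turn out to be essentially surjective and a local weak equivalence (together with the extra decorations needed in parts (2) and (3)), and all these properties must then be shown to survive the transfinite composition.

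For type (a) pushouts, observe that a map in $J_{loc}$ lives over an identity of colour sets, so the pushout is actually computed in the fiber $\EOperfc{\Mm}{C}$ (cf.\ Remark \ref{rmk1}) and is essentially surjective for the trivial reason that colours do not change. The fact that it is a local trivial cofibration is precisely what transfer / $K$-transfer / $\otimes$-transfer for operads provides: in part (1) admissibility gives a local trivial cofibration, in part (2) $K$-admissibility gives in addition that the underlying map in $\Mm^{\Sqofc{C}}$ lies in the local class $K$, and in part (3) Proposition \ref{compstone.alg} (applied to $\Opnsop{C}$ and $\Oprop{C}$, which are tame polynomial) gives a local trivial $\otimes$-cofibration.

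For type (b) pushouts the key input is Lemma \ref{lemma.push2}: since $\obCat\to\mathbb{G}$ is a full embedding of an $\Mm$-category with one object into one with objects $\{0,1\}$, the pushout of $j_!(i_1)$ along any morphism $j_!(\obCat)\to P$ is fully faithful, so $\trf{v}$ is an isomorphism in $\EOperfc{\Mm}{\Cl{P}}$. In particular such a $v$ is a local isomorphism, hence a local trivial cofibration, a local $K$-morphism, and a local $\otimes$-cofibration, for free. Essential surjectivity of $v$ comes from the fact that in $Q$ the new colour $1$ is connected to $v(\ast)=0$ by the $\Mm$-interval $\mathbb{G}$ itself.

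Finally, I combine: the local weak equivalence property passes to the transfinite composite because, in (1), weak equivalences in the product category $\Mm^{\Sqofc{-}}$ are closed under transfinite composition by hypothesis; in (2), because $\Mm$ is $K$-compactly generated and the intermediate maps are local $K$-morphisms, so $K$-perfectness applies fiberwise; in (3), because $\Mm$ is compactly generated and the intermediate maps are local $\otimes$-cofibrations, so $\otimes$-perfectness applies. The local $K$-morphism (resp.\ local $\otimes$-cofibration) property itself passes to the transfinite composite by saturation of the relevant class. Essential surjectivity of the composite is obtained by applying $j^*$ and invoking Corollary \ref{ess.surj.cor}, which ensures that essentially surjective morphisms in $\ECat{\Mm}$ are closed under transfinite composition. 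I expect the main subtlety to be the step about essential surjectivity under transfinite composition: it does not formally follow from properties of individual pushouts, and is precisely what Corollary \ref{ess.surj.cor} is designed to handle; the remaining point requiring care is verifying that colimits of operads are computed fiberwise along sequences where colour sets eventually stabilize (or, more precisely, so that the smallness argument underlying Corollary \ref{ess.surj.cor} applies to the image colour).
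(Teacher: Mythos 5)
Your scaffolding --- splitting the relative $\mathcal{J}$-cell into pushouts along $J_{loc}$ and along $j_!(i_1)$, treating the $J_{loc}$-pushouts fiberwise via the transfer hypotheses, and closing under transfinite composition via Lemma \ref{ess.surj.lemma} together with the closure/perfectness hypotheses --- matches the paper, and your type (a) analysis and transfinite-composition step are sound. But your treatment of the type (b) pushouts has a genuine gap: the morphism $\morp{i_1}{\obCat}{\mathbb{G}}$ is \emph{not} a full embedding, so Lemma \ref{lemma.push2} does not apply to it. A $\Mm$-interval $\mathbb{G}$ is merely a cofibrant object of $\ECatfc{\Mm}{\{0,1\}}$ \emph{weakly equivalent} to $\mathbb{I}$; its endomorphism object $\mathbb{G}(1,1)$ is weakly equivalent to the unit $I$ but in general not isomorphic to it, whereas fullness of $i_1$ would require $\arr{I=\obCat(\ast,\ast)}{\mathbb{G}(1,1)}$ to be an isomorphism. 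Consequently your conclusion that the pushout $v$ is fully faithful, hence a \emph{local isomorphism}, is false --- and it cannot be repaired as stated: if pushing out along $j_!(i_1)$ never altered the local structure, these maps could not nontrivially enforce the path-lifting property that $\mathcal{J}$ is designed to encode. Since parts (2) and (3) of your argument obtain the local $K$-morphism and local $\otimes$-cofibration properties ``for free'' from this nonexistent local isomorphism, they inherit the same gap.

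The paper repairs exactly this point by factoring $i_1$ as $\obCat \overset{\phi}\longrightarrow \mathbb{G}_{0,0} \overset{\psi}\longrightarrow \mathbb{G}$, where $\mathbb{G}_{0,0}$ is the full subcategory of $\mathbb{G}$ on a single object, and decomposing the pushout into two. The second map $\psi$ \emph{is} a full embedding, so Lemma \ref{lemma.push2} applies to it, giving a fully faithful (hence locally weakly equivalent) map which is essentially surjective by construction. The first map $\phi$ lives in $\ECatfc{\Mm}{\singSet}$, and here a nontrivial external input is needed: by the Interval Cofibrancy Theorem (\cite[Theorem 1.15]{BM12}), $\phi$ is a cofibration in $\ECatfc{\Mm}{\singSet}$, and it is trivial because $\mathbb{G}(0,0)$ is weakly equivalent to $I$. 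Its pushout is computed in the fiber $\EOperfc{\Mm}{\Cl{X}}$ as a pushout along $p_!(j_{\singSet !}(\phi))$, an image under left Quillen functors, hence is a trivial cofibration in the fiber --- a local weak equivalence bijective on colours. It is at this step, not via a local isomorphism, that the $K$-admissibility (for part (2)) and Proposition \ref{compstone.alg} (for part (3)) must actually be invoked to conclude that the resulting map is in addition a local $K$-morphism, respectively a local $\otimes$-cofibration. Your closing worry about colour sets stabilizing is a non-issue: Lemma \ref{ess.surj.lemma} only uses the smallness of $\obCat$ in $\ECat{\Mm}$, not any stabilization of colours.
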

\begin{proof}
The proof is similar to the one of Lemma \ref{cell1}. 
By Lemma \ref{ess.surj.lemma} essentially surjective maps are closed under transfinite composition.
 Under the hypothesis of point (\ref{cell2-1}) local weak equivalences are closed under transfinite composition so we are reduced to prove that, for every push-out diagram like (\ref{dgc}), if $i\in \mathcal{J}$ then $j$ is a weak equivalence.
If $i\in J_{loc}$ then this follows from the fact that the push-out can be calculated in $\EOperfc{\Mm}{\Cl{X}}$; this implies that $\trf{j}$ is a trivial cofibration in $\EOperfc{\Mm}{\Cl{X}}$, thus $j$ is local weak equivalence bijective on colours (in $\EOper{\Mm}$), hence a weak equivalence.

Let us consider now the case in which $\morp{i=j_!(i_1)}{j_!(\obCat)}{j_!(\mathbb{G})}$ for some $\mathbb{G}\in \mathfrak{G}$:
\[
 \diagc{j_!(\obCat)}{X}{j_!(\mathbb{G})}{Y\, .}{i}{j}{}{}{}
\]
We can decompose it in two push-outs
\[
 \xymatrix{j_!(\obCat) \ar[d]^{j_!(\phi)} \ar[r]^{p}   & X \ar[d]^{\phi'} \\
           j_!(\mathbb{G}_{0,0}) \ar[r] \ar[d]^{j_!(\psi)} & X'\ar[d]^{\psi'}\\
           j_!(\mathbb{G}) \ar[r]                         & Y}
\]
where $\mathbb{G}_{0,0}$ is the full subcategory of $\mathbb{G}$ which has $0$ as unique object, and $\psi$ is the canonical inclusion.
The functor $\psi$ is fully faithful, hence Lemma \ref{lemma.push2} implies that $\psi'$ is fully faithful (hence a local weak equivalence). Since $\psi'$ is essentially surjective by construction, it is a weak equivalence in $\EOper{\Mm}$.

Let us consider now the  push-out on top. It can be calculated in $\EOperfc{\Mm}{\Cl{X}}$ (see Appendix \ref{sez.colim})
as the push-out of $\trc{p}$ (which is a morphism from $p_!(j_!(\obCat))$ to $X$) along $p_!(j_!(\phi))$, in particular $\phi'$ is bijective on colours (so it is essentially surjective).

Under our hypotheses $\phi$ is a cofibration in $\EOperfc{\Mm}{\singSet}$ (according to the \cite[Interval Cofibrancy Theorem 1.15]{BM12}), furthermore it is a trivial cofibration
since $\mathbb{G}$ is weakly equivalent to $\obCat$.

Note that  $p_!(j_!(\phi))=p_!(j_{\singSet !}(\phi))$ and $p_!$ and $j_{\singSet !}$ are left Quillen functors, so $p_!(j_!(\phi))$ is a trivial cofibration in 
$\EOperfc{\Mm}{\Cl{X}}$. This implies that $\phi'$ is a trivial cofibration in $\EOperfc{\Mm}{\Cl{X}}$ as well; hence a local weak equivalence in $\EOper{\Mm}$.

 Since $\phi'$ is furthermore bijective on colours, it is a weak
equivalence in $\EOper{\Mm}$ and this concludes the proof of point (\ref{cell2-1}). 

The proof of point (\ref{cell2-1b}) and (\ref{cell2-2}) are almost identical, keeping in mind Proposition \ref{compstone.alg} and the fact that in a $K$-compactly generated model category the class of maps which are local weak equivalence and local $K$-morphisms is closed under transfinite composition. 
\end{proof}

\begin{lemma}\label{small.lemma2}
Let $\Mm$ be a cocomplete monoidal closed category and $n\in \N$ then:
\begin{enumerate}
 \item If $X\in \Mm$ is small then $C_n(X)$ is small in $\EOper{\Mm}$.
 \item Suppose $\Mm$ is a $K$-compactly generated model category. If $X\in \Mm$ is $K$-small then $C_n(X)$ is small in $\ENSOper{\Mm}$ (resp. $\EROper{\Mm}$) with respect to local $K$-morphisms.
\end{enumerate}
\end{lemma}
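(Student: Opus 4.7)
My plan is to reduce smallness of $C_n(X)$ in the operad category to smallness of $X$ in $\Mm$ by unwinding the adjunctions defining $C_n = F_{[n]}\iota_{s_n}$. Composing the direct/inverse image adjunction along a map of colours $\phi\colon [n+1]\to\Cl{P}$, the free--forgetful adjunction (\ref{eq.ff.op2}) (respectively (\ref{eq.ff.nsop2}), (\ref{eq.ff.rop2})) in the fibre over $[n+1]$, and the adjunction $\iota_{s_n}\dashv\mathrm{ev}_{s_n}$ in $\Mm^{\Sqofc{[n+1]}}$, I obtain the natural bijection
\[
\mathrm{Hom}\bigl(C_n(X),P\bigr) \;\cong\; \coprod_{\phi\colon [n+1]\to\Cl{P}} \mathrm{Hom}_{\Mm}\bigl(X,\,P(\phi(s_n))\bigr),
\]
valid in $\EOper{\Mm}$, $\ENSOper{\Mm}$ and, when $n\geq 1$ so that $\phi(s_n)\in\RSqofc{\Cl{P}}$, also in $\EROper{\Mm}$.

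Next I would analyse how filtered colimits are computed in the operad category. Using the description recalled in Appendix \ref{sez.colim}, for a filtered diagram $(P_\alpha)_\alpha$ the bifibration $\Clf$ preserves filtered colimits, and for each signature $s\in\Sqofc{\Cl{\mathrm{colim}_\alpha P_\alpha}}$ one has
\[
(\mathrm{colim}_\alpha P_\alpha)(s) \;=\; \mathrm{colim}_{\alpha\geq\alpha_0}\, P_\alpha(s_\alpha),
\]
where $\alpha_0$ is any index large enough that $s$ lifts to some $s_{\alpha_0}\in\Sqofc{\Cl{P_{\alpha_0}}}$ and $s_\alpha$ is the image of $s_{\alpha_0}$ along the transition map to $\Cl{P_\alpha}$. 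For part (1), the finiteness of $[n+1]$ forces every $\phi\colon [n+1]\to\Cl{\mathrm{colim}_\alpha P_\alpha}$ to factor through some $\Cl{P_{\alpha_0}}$ by filteredness, and the smallness of $X$ commutes $\mathrm{Hom}_\Mm(X,-)$ past the colimit of values $P_\alpha(\phi_\alpha(s_n))$; assembling with the coproduct above yields smallness of $C_n(X)$ in $\EOper{\Mm}$. For part (2), I would restrict to filtered diagrams whose transition maps are local $K$-morphisms in $\ENSOper{\Mm}$ (respectively $\EROper{\Mm}$); by Definition \ref{defmod} the induced transition maps $P_\alpha(s_\alpha)\to P_\beta(s_\beta)$ in $\Mm$ then all belong to $K$, so $K$-smallness of $X$ suffices.

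The step I expect to be the main obstacle is the verification of the signature-wise formula for filtered colimits, especially that transition maps keep lying in $K$ after the change of colour sets. The tame polynomial nature of $\Opnsop{C}$ and $\Oprop{C}$ makes the free construction of $\mathrm{colim}_\alpha P_\alpha$ at each signature transparent enough for this to go through, which is precisely why the $K$-version in part (2) is stated only in the non-symmetric and reduced settings; in $\EOper{\Mm}$ the free symmetric operad involves quotients by symmetric group actions that do not interact as cleanly with $K$-restricted filtered colimits, so only the unrestricted smallness statement of part (1) is available there.
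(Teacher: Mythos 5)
Your core argument is correct and is essentially the paper's own proof: the paper reduces the statement to Lemma \ref{small.lemma} (smallness of $\iota_{s_n}(X)$ in $\EMGraph{\Mm}$, proved via exactly your hom-set description through the colour map $[n+1]\to\Cl{\varinjlim L}$ and evaluation at $s_n$) together with Lemma \ref{lemma.adj} (the forgetful functors to multi-graphs create filtered colimits, giving your signature-wise formula), and your verification that transition maps on components stay in $K$ matches the paper's observation that the colimit over $i/\omega$ in formula (\ref{eq.filt.comp}) is a transfinite composition of morphisms in $K$.

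However, your closing paragraph misdiagnoses the step you yourself flag as the main obstacle. Tameness of $\Opnsop{C}$ and $\Oprop{C}$ plays no role here: tameness concerns push-outs along free maps, whereas the signature-wise formula for filtered colimits holds simply because $\Mm$ is monoidal closed (so $\otimes$ commutes with filtered colimits), which is how the paper proves Lemma \ref{lemma.adj}. Moreover, that lemma's point (1) covers the symmetric forgetful functor $U_{\Opop{}}$ as well: computing a filtered colimit of symmetric operads involves no quotients by symmetric groups --- the $\Sigma$-actions are merely induced maps $C(s)\to C(s\sigma)$ between colimit components --- so your claimed obstruction in $\EOper{\Mm}$ is spurious, and the $K$-smallness argument of part (2) would run verbatim there. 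The restriction of part (2) to $\ENSOper{\Mm}$ and $\EROper{\Mm}$ in the statement reflects where the lemma is invoked in Theorem \ref{mod.str.thm}, not a failure of the symmetric case. Since your proof of what is actually asserted does not depend on this incorrect gloss, the argument stands, but the explanation should be corrected.
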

\begin{proof}
Recall that $C_n(X)=F_{\Opop{[n]}} \iota_{s_n}(X)$. Both statements are a straight-forward consequence of Lemma \ref{small.lemma} and Lemma \ref{lemma.adj}. 
\end{proof}
 
\subsection{Model structure}

We are now ready to prove that there exists a cofibrantly generated model structure on $\EOper{\Mm}$ with (\ref{eq.gen.c}) and (\ref{eq.gen.tc})  as generating cofibrations and trivial cofibrations and whose class of weak equivalences $\mathcal{W}$ is the one defined in Definition \ref{defmod}.

This is our main theorem:

\begin{thm}\label{mod.str.thm}
Let $\Mm$ be a monoidal model category  such that:
\begin{itemize}
\item The unit is cofibrant;
\item The model structure is right proper;
\item There exists a set $\mathfrak{G}$ of generating $\Mm$-intervals
\end{itemize}
Then:
\begin{enumerate}
\item\label{mod.str.thm.p1} If $\Mm$ admits transfer for operads (Definition \ref{opadefi}) and the class of weak equivalences is closed under transfinite composition,
then there exists a cofibrantly generated model structure on $\EOper{\Mm}$ such that fibrations and weak equivalences are the ones introduced in 
Definition \ref{defmod}.

\item\label{mod.str.thm.p1b} If $\Mm$ is $K$-compactly generated, admits $K$-transfer for operads (Definition \ref{opadefi}) 
, then there exists a cofibrantly generated model structure on $\EOper{\Mm}$ such that fibrations and weak equivalences are the ones introduced in 
Definition \ref{defmod}.

\item\label{mod.str.thm.p2} If $\Mm$ is compactly generated, satisfies the monoid axiom and admits transfer for non-symmetric operads (reduced operads) then there exists a cofibrantly generated model
structure on  $\ENSOper{\Mm}$ (resp. $\EROper{\Mm}$) such that fibrations ans weak equivalences are as in Definition \ref{defmod}.
\end{enumerate}
\end{thm}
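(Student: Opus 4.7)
The plan is to invoke the standard recognition theorem for cofibrantly generated model structures, applied to the class $\mathcal{W}$ of weak equivalences from Definition \ref{defmod} together with the sets $\mathcal{I}$ and $\mathcal{J}$ defined in (\ref{eq.gen.c}) and (\ref{eq.gen.tc}) as candidate generating (trivial) cofibrations. The same roadmap will handle all three parts of the statement; the only thing that changes between them is which of the auxiliary lemmas of the previous subsections is fed into the smallness and trivial-cell conditions, and the hypothesis packages of (\ref{mod.str.thm.p1})--(\ref{mod.str.thm.p2}) have been engineered precisely to supply that input.

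The recognition theorem requires four things: (a) $\mathcal{W}$ satisfies two-out-of-three and is closed under retracts; (b) the domains of $\mathcal{I}$ and $\mathcal{J}$ are small with respect to \cell{\mathcal{I}} and \cell{\mathcal{J}}; (c) every relative \cell{\mathcal{J}} belongs to $\mathcal{W}$; and (d) $\inj{\mathcal{I}} = \mathcal{W}\cap \inj{\mathcal{J}}$. Property (a) follows from Proposition \ref{prop2.12} together with an immediate check that local weak equivalences and essentially surjective maps are each retract-closed. For (b), Lemma \ref{small.lemma2} gives smallness of the domains of $I_{\mathrm{loc}}$ and $J_{\mathrm{loc}}$, while the domains of the remaining generators ($\emptyset$ and $j_!(\obCat)$) are handled directly from the smallness of $\obCat$ and of the initial object in each fibre; in case (\ref{mod.str.thm.p1b}) absolute smallness is replaced by smallness relative to local $K$-morphisms, and in case (\ref{mod.str.thm.p2}) by smallness relative to local $\otimes$-cofibrations, these classes being shown to contain the relevant cell complexes by Lemmas \ref{cell1} and \ref{cell2}. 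Condition (c) is exactly Lemma \ref{cell2}, whose three items correspond one-to-one with the three cases of the theorem.

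Condition (d) carries the geometric content of the argument. Remark \ref{rmk2} together with the computation of lifts against $\arr{\emptyset}{j^*(\obCat)}$ identifies $\inj{\mathcal{I}}$ with the local trivial fibrations that are surjective on colours, which by Lemma \ref{lemma2.4} are exactly the trivial fibrations of Definition \ref{defmod}; in particular $\inj{\mathcal{I}}\subset \mathcal{W}$. That $\inj{\mathcal{J}}$ coincides with the class of fibrations follows from the retract argument immediately below (\ref{eq.gen.tc}): for locally fibrant maps, path-lifting against every $\Mm$-interval reduces to path-lifting against the generators in $\mathfrak{G}$. The only non-formal step, and the main obstacle of the whole proof, is to verify that every trivial fibration $\morp{f}{A}{B}$ is path-lifting, and hence a fibration: given a lifting problem against $\morp{j_!(i_k)}{j_!(\obCat)}{j_!(\mathbb{G})}$, surjectivity on colours lifts the object at the opposite vertex of $\mathbb{G}$, and pulling back along the resulting map $\{0,1\}\to \Cl{A}$ produces a lifting problem in $\ECatfc{\Mm}{\{0,1\}}$ against a local trivial fibration, which is solved by invoking the cofibrancy of the $\Mm$-interval $\mathbb{G}$ in the transferred model structure on $\ECatfc{\Mm}{\{0,1\}}$ (this transferred structure is available because transfer for operads implies transfer for categories). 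This is the operadic analogue of the corresponding argument in \cite{BM12}, and is the technical heart of the proof. Once it is in place, $\inj{\mathcal{I}} = \mathcal{W}\cap \inj{\mathcal{J}}$ follows, the either-or condition of the recognition theorem is immediate, and the desired model structure is produced.
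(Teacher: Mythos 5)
Your proposal is correct and takes essentially the same route as the paper: both proofs apply the recognition theorem (Hovey, Theorem 2.1.19) to the same sets $\mathcal{I}$, $\mathcal{J}$ from (\ref{eq.gen.c})--(\ref{eq.gen.tc}) and the class $\mathcal{W}$ of Definition \ref{defmod}, checking 2-out-of-3 and retract closure via Proposition \ref{prop2.12}, smallness via Lemma \ref{small.lemma2} (with Lemmas \ref{cell1} and \ref{cell2} supplying that the relevant cells are local $K$-morphisms, resp.\ local $\otimes$-cofibrations, in cases (\ref{mod.str.thm.p1b}) and (\ref{mod.str.thm.p2})), the condition $\cell{\mathcal{J}}\subseteq \mathcal{W}$ via Lemma \ref{cell2}, and $\inj{\mathcal{I}}=\mathcal{W}\cap\inj{\mathcal{J}}$ via Remark \ref{rmk2}, the retract argument below (\ref{eq.gen.tc}), and Lemma \ref{lemma2.4}. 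The only difference is one of exposition: where you re-derive the key path-lifting step for local trivial fibrations surjective on colours, the paper simply invokes its Lemma \ref{lemma2.4}, which transports the corresponding statement of Berger--Moerdijk along $j^*$.
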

\begin{proof}
We pick (\ref{eq.gen.c}) as set of generating cofibrations and (\ref{eq.gen.tc}) as set of generating trivial cofibrations.

Since the category of $\EOper{\Mm}$, $\ENSOper{\Mm}$ and $\EROper{\Mm}$  are complete and cocomplete, 
to prove the existence of the model structure  it is sufficient to prove (following Theorem 2.1.19 \cite{Ho99})) that:
\begin{enumerate}
 \item the class of weak equivalences has the 2-out-of-3 property and it is closed under retracts;
\item the domains of $\mathcal{I}$ are small relative to $\cell{\mathcal{I}}$;
\item the domains of $\mathcal{J}$ are small relative to $\cell{\mathcal{J}}$;
\item $\inj{\mathcal{I}}= \mathcal{W}\cap \inj{\mathcal{J}}$
\item $\cell{\mathcal{J}}\subseteq \mathcal{W}\cap \cof{\mathcal{I}}$.
\end{enumerate}

(1) holds since $\mathcal{W}$ has the 2-out-of-3 property by Proposition \ref{prop2.12} and it is closed under retract because local weak equivalence are closed
under retracts  and essentially surjectivity is closed under retracts (because it is so in $\ECat{\Mm}$, see proof of  \cite[Theorem 2.5]{BM12}). 
Point 4 is Lemma \ref{lemma2.4}.
Points 2 and 3 follow from Lemma \ref{small.lemma2}.
Point 5 is Lemma \ref{cell2}.
\end{proof}

\begin{exm} 
Model categories for which the first part of the theorem applies are, for example: simplicial sets with the Quillen model
structure, (unbounded) chain complexes over a filed of characteristic 0 with the projective model structure,
simplicial modules over any ring (with the model structure transferred from the Quillen's one on simplicial sets)
and $\Set$ with the model structure which has bijection as weak equivalences and surjections as fibrations.
In the case of simplicial sets this was already proven in \cite[Theorem 1.14]{CM11} and \cite{Ro11}. For $\Mm=\Set$
on recover the ``folk'' model structure on Operads (see \cite{We07}).

The category $\Top$ of (weak Hausdorff) compactly generated spaces satisfies the hypotheses of point (\ref{mod.str.thm.p1b}) if we take $K=\mathcal{T}_1$, the class of $T_1$-closed inclusions. 

All the categories listed above also satisfy the hypothesis of point (\ref{mod.str.thm.p2}). An example of model category which satisfy (\ref{mod.str.thm.p2}) but not (\ref{mod.str.thm.p1}) is the category of chain complexes over an arbitrary commutative ring with 
the projective model structure. 

\end{exm}

\subsection{Remark on weak equivalences}

In \cite{Ro11} weak equivalences are defined in a different way which is not a priori equivalent to ours:

\begin{defi}
A morphism $\morp{f}{P}{Q}$ of $\Mm$-enriched coloured operads is a \emph{Dwyer-Kan weak equivalence} if:
\begin{enumerate}
\item is a locally weak equivalence, i.e. for every $s\in \Sqofc{\Cl{P}}$ the morphism
\[
\morp{\trf{f}}{P(s)}{f^*Q(s)}
\]
is a weak equivalence in $\Mm$.
\item The induced functor
\[
\morp{\homcc{j^*f}}{\homcc{j^*P}}{\homcc{j^*Q}}
\]
is essentially surjective. 
\end{enumerate}
\end{defi} 
We recall that for $A\in \ECat{\Mm}$ the ordinary category $\homcc{A}\in \Cat$ is the category which has as objects the objects of $A$ and for every $x,y\in A$ set of morphisms $\homcc{A}(x,y)=\homcat{\Mm}(\Un_{\Mm}, A(x,y))$; the composition in $\homcc{A}(x,y)$ is defined in the evident way (\emph{cf.} \cite[Remark 2.7]{BM12}).

It was already observed by Berger and Moerdijk that under our hypotheses (so also in the case of \cite{Ro11} where $\Mm$ is simplicial sets with the Kan-Quillen model structure) the two definitions coincide.
  
The following proposition is a straight-forward consequence  of \cite[Propositions 2.20,2.24]{BM12}:
\begin{prop}
Suppose that $\Mm$ is a monoidal model category with a cofibrant unit admitting transfer for categories. Then the class of morphisms in $\EOper{\Mm}$ which are essentially surjective locally weak equivalences coincides with the class of Dwyer-Kan weak equivalences.   
\end{prop}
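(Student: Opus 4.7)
The plan is to reduce the statement entirely to the corresponding fact for $\Mm$-enriched categories, which is Berger--Moerdijk's result. Both definitions share the same local weak equivalence clause verbatim, so the only content of the proposition is to check that, for a local weak equivalence $f\colon P\to Q$ in $\EOper{\Mm}$, essential surjectivity in the sense of Definition \ref{defmod} is equivalent to essential surjectivity of the induced functor $\homcc{j^*f}$.

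By Definition \ref{defmod}, $f$ is essentially surjective precisely when $j^*f$ is essentially surjective as an $\Mm$-functor in the sense of Definition \ref{def.mod}. On the other hand, the Dwyer--Kan condition is that $\homcc{j^*f}$ is essentially surjective as a functor of ordinary categories. Thus both conditions depend only on $j^*f\in \ECat{\Mm}$, and the proposition will follow once one knows the analogous equivalence for $\Mm$-enriched categories: under the hypotheses (monoidal model structure, cofibrant unit, transfer for categories), an $\Mm$-functor $F\colon A\to B$ which is a local weak equivalence is essentially surjective in the sense of Definition \ref{def.mod} if and only if $\homcc{F}$ is essentially surjective in $\Cat$.

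The first step is therefore to invoke the hypothesis that $\Mm$ admits transfer for categories so that Berger--Moerdijk's machinery applies to $j^*P$ and $j^*Q$. The equivalence of the two essential surjectivity notions in $\ECat{\Mm}$ is the content of \cite[Proposition 2.20 and Proposition 2.24]{BM12}: the easy direction translates a zig-zag through an $\Mm$-interval into an iso in $\homcc{-}$ using the fact that the unit is cofibrant (so every object of $\Mm$ represents something in $\homcat{\Mm}(\Un_\Mm,-)$), while the non-trivial direction lifts an isomorphism in $\homcc{j^*Q}$ to an actual $\Mm$-interval mapping into $j^*Q$, exploiting that $j^*Q$ (being locally fibrant after a local fibrant replacement that exists by transfer for categories) admits path-lifting with respect to intervals.

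Putting these ingredients together, the proof is a one-line translation: apply the cited propositions to $j^*f$, note that the local weak equivalence hypothesis on $f$ ensures $j^*f$ is a local weak equivalence of $\Mm$-categories, and conclude that the two essential surjectivity clauses are interchangeable. The only point that requires checking, and the mildest obstacle, is that the hypotheses of \cite[Propositions 2.20, 2.24]{BM12} are genuinely met by $j^*P$ and $j^*Q$; this is immediate from our standing assumptions, since transfer for categories is exactly what makes those propositions applicable.
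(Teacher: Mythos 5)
Your proposal is correct and takes essentially the same route as the paper: the paper's entire ``proof'' is the remark that the proposition is a straightforward consequence of \cite[Propositions 2.20, 2.24]{BM12}, and your reduction via $j^*$ — observing that the two local weak equivalence clauses agree verbatim and that both essential-surjectivity conditions depend only on $j^*f\in\ECat{\Mm}$ — is precisely the implicit content of that remark made explicit. No gap: the hypotheses (cofibrant unit, transfer for categories) are exactly those under which the cited Berger--Moerdijk propositions apply to $j^*P$ and $j^*Q$, as you note.
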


\section{Right properness}

We recall the following lemma, whose proof is trivial:
\begin{lemma}
 Let $\Mm$ be a right proper monoidal model category and $\mathcal{O}$ a coloured $\Mm$-operad. If the category of $\mathcal{O}$-algebras in $\Mm$ admits a transferred model structure then it is right proper.  
\end{lemma}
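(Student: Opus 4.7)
The plan is to exploit the fact that both fibrations and weak equivalences in the transferred model structure on $\Alg{\mathcal{O}}{\Mm}$ are detected by the forgetful functor $U_{\mathcal{O}}$, combined with the fact that $U_{\mathcal{O}}$, being a right adjoint in the monadic adjunction \eqref{adj.alg}, preserves all limits.

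First, I would observe that if $\Mm$ is right proper, then so is the product model category $\Mm^{C}$ where $C=\Cl{\mathcal{O}}$. This is immediate because weak equivalences, fibrations, and pullbacks in $\Mm^{C}$ are all computed coordinate-wise, so the right properness of $\Mm$ transfers level-wise to $\Mm^{C}$.

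Next, consider a pullback square in $\Alg{\mathcal{O}}{\Mm}$
\[
\diagc{A\times_{C}B}{B}{A}{C}{}{}{p}{w}{}
\]
where $p$ is a fibration and $w$ is a weak equivalence. Applying $U_{\mathcal{O}}$ yields a square in $\Mm^{C}$, which is again a pullback since $U_{\mathcal{O}}$ is a right adjoint and hence preserves limits. By the definition of the transferred model structure, $U_{\mathcal{O}}(p)$ is a fibration in $\Mm^{C}$ and $U_{\mathcal{O}}(w)$ is a weak equivalence in $\Mm^{C}$. Right properness of $\Mm^{C}$ then forces the pulled-back map to be a weak equivalence in $\Mm^{C}$, which by the transferred model structure means exactly that the pulled-back map is a weak equivalence in $\Alg{\mathcal{O}}{\Mm}$.

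There is no real obstacle here; the content is entirely packaged in two elementary facts (that the right-adjoint forgetful functor preserves pullbacks, and that right properness passes to products), both of which are standard. This is why the author describes the proof as trivial.
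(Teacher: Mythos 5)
Your proof is correct and is precisely the argument the paper has in mind when it declares the proof trivial: fibrations and weak equivalences in the transferred structure are created by the forgetful functor $U_{\mathcal{O}}$, which as a (monadic) right adjoint preserves the pullback, and right properness of $\Mm$ passes level-wise to $\Mm^{C}$. Nothing is missing, and no genuinely different route is taken.
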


Taking $\mathcal{O}=\Opop{C}$ the previous lemma reads
\begin{cor}\label{rp.alg}
If $\Mm$ is right proper and admits transfer for operads (non-symmetric operads, reduced operads) then, for every $C\in \Set$ the transferred model structure on $\EOperfc{\Mm}{C}$ ($\ENSOperfc{\Mm}{C}$, $\EROperfc{\Mm}{C}$) is right proper. 
\end{cor}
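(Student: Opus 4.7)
My plan is to prove Corollary \ref{rp.alg} as a direct instance of the preceding lemma. In each of the three cases, the identifications recalled in the Preliminaries give $\EOperfc{\Mm}{C}\cong\Alg{\Opop{C}}{\Mm}$, $\ENSOperfc{\Mm}{C}\cong\Alg{\Opnsop{C}}{\Mm}$, and $\EROperfc{\Mm}{C}\cong\Alg{\Oprop{C}}{\Mm}$. The hypothesis that $\Mm$ admits transfer for (symmetric, non-symmetric, reduced) operads is precisely the statement that these operads are admissible, so the transferred model structure on the corresponding fiber exists. It then suffices to apply the lemma with $\mathcal{O}$ equal to $\Opop{C}$, $\Opnsop{C}$, or $\Oprop{C}$ respectively.

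For completeness, I would sketch the proof of the lemma itself, although the paper calls it trivial. Let $C=\Cl{\mathcal{O}}$ and consider a pullback square in $\Alg{\mathcal{O}}{\Mm}$
\[
\diagc{A\times_B X}{X}{A}{B}{}{}{p}{w}{}
\]
where $w$ is a weak equivalence and $p$ is a fibration. The forgetful functor $U_\mathcal{O}\colon\Alg{\mathcal{O}}{\Mm}\to\Mm^{C}$ of adjunction (\ref{adj.alg}) is a right adjoint, hence preserves limits; in particular it sends the square above to a pullback square in $\Mm^{C}$. By definition of the transferred model structure, $U_\mathcal{O}(p)$ is a fibration and $U_\mathcal{O}(w)$ is a weak equivalence in $\Mm^{C}$. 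The product model structure on $\Mm^{C}$ is right proper because right properness in $\Mm$ is tested pointwise on generators, and limits, fibrations, and weak equivalences in $\Mm^{C}$ are all computed coordinatewise; therefore the pulled-back map is a weak equivalence in $\Mm^{C}$. Since weak equivalences in $\Alg{\mathcal{O}}{\Mm}$ are reflected (in fact detected) by $U_\mathcal{O}$, the original pulled-back map is a weak equivalence, establishing right properness.

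There is no real obstacle here: the whole argument rests only on the fact that $U_\mathcal{O}$ is a right adjoint and creates both fibrations and weak equivalences, together with the pointwise nature of right properness on product model categories. The corollary then follows from the lemma by the identifications of the fibers with categories of algebras over the operads $\Opop{C}$, $\Opnsop{C}$, $\Oprop{C}$ recalled in Section~3.
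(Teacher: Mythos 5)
Your proposal is correct and matches the paper's own proof, which likewise obtains the corollary by applying the preceding lemma with $\mathcal{O}=\Opop{C}$ (resp.\ $\Opnsop{C}$, $\Oprop{C}$) via the identifications of the fibers with categories of algebras. Your sketch of the lemma itself --- $U_{\mathcal{O}}$ preserves pullbacks as a right adjoint, creates fibrations and weak equivalences by definition of the transferred structure, and $\Mm^{C}$ is right proper since limits, fibrations and weak equivalences there are all coordinatewise --- is exactly the routine argument the paper omits as trivial.
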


We can now prove the following:
 
\begin{prop}
Suppose that $\Mm$ satisfies the hypothesis of point (\ref{mod.str.thm.p1}) of Theorem \ref{mod.str.thm}. Then the model structure on $\EOper{\Mm}$ is right proper. 
\end{prop}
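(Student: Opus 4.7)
The plan is to verify that in any pullback square
\[
\xymatrix{A \times_C B \ar[d]_{q} \ar[r]^-{\tilde{w}} & B \ar[d]^{p} \\ A \ar[r]^-{w} & C}
\]
in $\EOper{\Mm}$ with $w$ a weak equivalence and $p$ a fibration, the map $\tilde{w}$ is again a weak equivalence. I would establish separately the two defining conditions: that $\tilde{w}$ is a local weak equivalence and that it is essentially surjective.

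For the local part, the bifibration $\Clf$ preserves pullbacks (being a right adjoint), so $\Cl{A \times_C B} = \Cl{A} \times_{\Cl{C}} \Cl{B} =: D$. Writing $\pi_A,\pi_B$ for the induced projections to $\Cl{A}$ and $\Cl{B}$, and $\pi_C := \Cl{w}\pi_A = \Cl{p}\pi_B$, the pullback $A \times_C B$ in $\EOper{\Mm}$ coincides with the pullback in the fibre $\EOperfc{\Mm}{D}$ of the diagram $\pi_A^*(A) \to \pi_C^*(C) \leftarrow \pi_B^*(B)$; this is a standard consequence of the bifibration structure together with the fact that $U_{\Opop{D}}$ creates limits. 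The hypothesis that $w$ is a local weak equivalence says precisely that for every $s \in \Sqofc{\Cl{A}}$ the morphism $A(s) \to C(\Cl{w}(s))$ is a weak equivalence in $\Mm$, so pulling back along $\pi_A$ makes $\pi_A^*(A) \to \pi_C^*(C)$ a level-wise weak equivalence, hence a weak equivalence in the transferred model structure on $\EOperfc{\Mm}{D}$; similarly $\pi_B^*(B) \to \pi_C^*(C)$ is a fibration in $\EOperfc{\Mm}{D}$. Right properness of the fibre, Corollary \ref{rp.alg}, then gives that the pullback map $A \times_C B \to \pi_B^*(B)$ is a weak equivalence in $\EOperfc{\Mm}{D}$; since $\tilde{w}^*(B) = \pi_B^*(B)$, this map is exactly $\trf{\tilde{w}}$, and $\tilde{w}$ is therefore a local weak equivalence.

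For essential surjectivity, pick any colour $b \in \Cl{B}$. Essential surjectivity of $w$ applied to $p(b) \in \Cl{C}$ produces a colour $a \in \Cl{A}$, an $\Mm$-interval $\mathbb{H}$, and a morphism $h\colon j_!(\mathbb{H}) \to C$ with $h(0) = w(a)$ and $h(1) = p(b)$. The path-lifting property of $p$, applied to the square in which the top arrow is $b\colon j_!(\obCat) \to B$ at the endpoint $1$ and the bottom arrow is $h$, yields a lift $h'\colon j_!(\mathbb{H}) \to B$ with $p h' = h$ and $h'(1) = b$. Setting $b_0 := h'(0)$, the relation $p(b_0) = h(0) = w(a)$ ensures that $(a, b_0)$ is a colour of $A \times_C B$ with $\tilde{w}((a, b_0)) = b_0$, and the map $h'$ itself is an equivalence in $B$ between $b_0 = \tilde{w}((a,b_0))$ and $b$; this is the witness required by Definition \ref{def.mod}.

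The only real subtlety is the identification of the pullback in $\EOper{\Mm}$ with a pullback in the fibre $\EOperfc{\Mm}{D}$, which is where the bifibration structure is used. Once this is in place, the local part reduces to a direct invocation of Corollary \ref{rp.alg}, and the essentially surjective part is a single diagonal filling against the fibration $p$, so I expect no further obstruction.
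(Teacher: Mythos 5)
Your proof is correct and takes essentially the same route as the paper: the local part is exactly the paper's argument, pulling the square back to a pullback diagram in the fibre $\EOperfc{\Mm}{\Cl{A\times_C B}}$ and invoking right properness of the transferred structure there (Corollary \ref{rp.alg}), with you merely spelling out the fibrewise-pullback identification that the paper asserts without comment. The essential-surjectivity part likewise matches the paper's: lift the interval $h$ against the path-lifting fibration via $j_!(i_1)$, then use the universal property of the pullback of colour sets to produce the required colour.
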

\begin{proof}
Suppose a pullback diagram is given in $\EOper{\Mm}$
\[
\diagc{A}{X}{B}{Y}{f'}{w'}{f}{w}{}
\]
where $f$ is a fibration and $w$ is a weak equivalence. We have to prove that $w'$ is a weak equivalence, i.e. it is an essentially surjective local weak equivalence.

Note that, the diagram
\[
\diagc{A}{{w'}^*(X)}{f'^*(B)}{(fw)^*(Y)}{\trf{f'}}{\trf{w'}}{w'^*(\trf{f})}{f'^*(\trf{w})}{}
\]
is a pullback diagram in $\EOperfc{\Mm}{\Cl{A}}$, which is right proper by \ref{rp.alg} , hence $w'$ is a local weak equivalence.

We only have to prove that $w'$ is essentially surjective. Suppose a colour $x$ of $X$ is given. Since $w$ is essentially surjective we have an object $b$ in $B$, an interval $\mathbb{H}$, a morphism $\morp{h}{\mathbb{H}}{Y}$, such that the following diagram commutes
\[
\xymatrix{\mathbf{j_!(I)}\ar[dr]_{j_!(i_0)}\ar@/_1pc/[ddd]_{b}\ar@{-->}[dd]^{a} & & \mathbf{j_!(I)}\ar[dl]^{j_!(i_1)}\ar[dd]^{x}\\
& j_!(\mathbb{H})\ar[ddr]^{h}\ar@{-->}[dr]^{h'}\\
A\ar[rr]^{w'}\ar[d]^{f}& &X\ar[d]^{f}\\
B\ar[rr]^{w}& &Y
}
\]
Since $j_!(i_1)$ is a trivial cofibration and $f$ is a fibration, $h$ lifts to a morphism $\morp{h'}{j_!(\mathbb{H})}{X}$ such that $h'(1)=x$ and $fh'(0)=w(b)$; from the universal property of $A$ there is a colour $a\in \Cl{A}$,  such that $w'(a)=h'(0)$. Since $x$ was chosen arbitrarily, this proves that $w'$ is essentially surjective.  
\end{proof}

In an analogous way we can prove the same holds for the non-symmetric and the reduced case:

\begin{prop}
Suppose that $\Mm$ satisfies the hypothesis of point (\ref{mod.str.thm.p2}) of Theorem \ref{mod.str.thm}. Then the model structures on $\ENSOper{\Mm}$ and $\EROper{\Mm}$ are right proper. 
\end{prop}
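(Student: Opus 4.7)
The plan is to imitate the proof of the preceding proposition essentially verbatim; indeed, that argument only uses two ingredients, both of which are available under hypothesis (\ref{mod.str.thm.p2}) of Theorem \ref{mod.str.thm}: right-properness of the transferred model structure on each fiber, and the fact that $j_!(i_1)$ lies among the generating trivial cofibrations, so it is a trivial cofibration.

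Given a pullback square in $\ENSOper{\Mm}$ (resp.\ $\EROper{\Mm}$)
\[
\diagc{A}{X}{B}{Y}{f'}{w'}{f}{w}{}
\]
with $f$ a fibration and $w$ a weak equivalence, I first show that $w'$ is a local weak equivalence. Since $\Clf$ creates limits, after passing to the inverse images of $f$ and $w$ to the fiber over $\Cl{A}$ the same square becomes a pullback square in $\ENSOperfc{\Mm}{\Cl{A}}$ (resp.\ $\EROperfc{\Mm}{\Cl{A}}$). By Corollary \ref{rp.alg} this fiber carries a right proper transferred model structure, in which $\trf{f}$ pulled back is still a fibration and $\trf{w}$ pulled back is still a weak equivalence; right-properness in the fiber therefore forces $\trf{w'}$ to be a local weak equivalence, which is precisely the condition for $w'$ to be a local weak equivalence in the ambient category.

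Next I verify that $w'$ is essentially surjective. Fix a colour $x$ of $X$. Because $w$ is essentially surjective, there is a colour $b$ of $B$, an $\Mm$-interval $\mathbb{H}$, and a morphism $\morp{h}{j_!(\mathbb{H})}{Y}$ with $h(0)=w(b)$ and $h(1)=f(x)$. The map $j_!(i_1)\colon j_!(\obCat)\to j_!(\mathbb{H})$ belongs to the generating trivial cofibrations $\mathcal{J}$ produced by Theorem \ref{mod.str.thm}; since $f$ is a fibration, I can lift $h$ to $\morp{h'}{j_!(\mathbb{H})}{X}$ with $h'(1)=x$ and $f\,h'(0)=w(b)$. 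The universal property of the pullback then furnishes a colour $a\in\Cl{A}$ with $w'(a)=h'(0)$ and $f'(a)=b$, and $h'$ itself is an interval connecting $w'(a)$ to $x$ in $X$. As $x$ was arbitrary, $w'$ is essentially surjective, hence a weak equivalence.

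The main (and only) point where one might worry is whether anything in the preceding argument used a feature specific to symmetric operads; inspection shows it does not, since the whole reasoning only involves the bifibration structure over $\Set$, the fiberwise transferred model structures, the form of $\mathcal{J}$, and the lifting property of fibrations, all of which are available equally for $\ENSOper{\Mm}$ and $\EROper{\Mm}$ under the hypotheses of point (\ref{mod.str.thm.p2}).
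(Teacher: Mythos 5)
Your proposal is correct and follows the paper's intended route exactly: the paper itself proves this proposition by remarking that the argument for the symmetric case carries over verbatim, using only Corollary \ref{rp.alg} for the fiberwise pullback (giving that $w'$ is a local weak equivalence) and the lifting of the interval $h$ against the fibration $f$ to transport essential surjectivity through the pullback, just as you do. One small correction to your justification of the lift: for an arbitrary $\Mm$-interval $\mathbb{H}$ the map $j_!(i_1)$ need not belong to $\mathcal{J}$ (which only contains the maps coming from generating intervals $\mathbb{G}\in\mathfrak{G}$), but the lift exists anyway because fibrations are by Definition \ref{defmod} path-lifting, i.e.\ they have the right lifting property against $j_!(i_k)$ for \emph{every} $\Mm$-interval.
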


\appendix

\section{Filtered colimits in $\EOper{\Mm}$}\label{sez.colim}
\subsection{Colimits in bifibrations}
Suppose we have a bifibration $\morp{\pi}{\mathcal{C}}{\Set}$ such that for every $C\in\Set$ the fiber $\Fib{\pi}{C}$ is cocomplete. We want to show that 
$\mathcal{C}$ is cocomplete as well, expressing the colimits in $\mathcal{C}$ as colimits in certain fibers.

Fix a small category $I$. Let $\Delta [1]$ be the category representing morphisms, i.e. the category with two object $0$ and $1$ and only one morphism from $0$ to $1$. 

Let $\mathcal{D}$ be any category. A functor $\morp{F}{I\times \Delta [1]}{\mathcal{D}}$ is just a natural transformation from $F_{| I\times \{0\}}$ to $F_{| I\times \{1\}}$. Given $i\in I$ we will denote with $F_i$ the morphism in $\mathcal{D}$ given by $F_{\{i\}\times \Delta[1]}$ (in this way we recover the usual notation for natural transformations).

 Given two natural transformations $\morp{F,G}{I\times \Delta [1]}{\mathcal{C}}$ such that $F_{| I\times \{1\}}=G_{| I\times \{0\}}$, we denote by $\morp{G\ast F}{I\times \Delta [1]}{\mathcal{C}}$ their (horizontal) composite.

Given a morphism $\morp{f}{a}{b}$ in $\mathcal{C}$ (that we can see as a functor $\morp{f}{\Delta[1]}{C}$) let $\morp{\bar{f}}{I\times \Delta [1]}$ be the constant natural transformation from $a$ to $b$, i.e. $\bar{f}= f\circ p_2$ where $\morp{p_2}{I\times \Delta[1]}{\Delta[1]}$ is the second projection; in other words $\bar{f}_i=f$ for every $i\in I$.

Suppose we want to compute the colimit of a certain functor $\morp{L}{I}{\mathcal{C}}$. We can first compute the colimit of $\pi L$ in $\Set$, that we can see as a functor from $\morp{l}{I\times \Delta[1]}{\Set}$ such that $l_{|I\times \{0\}}=\pi L$ and $l_{|I\times \{1\}}$ is constant of value $l_1$. 
We can lift $l$ to a ``cocartesian cylinder`` $\morp{c}{I\times \Delta[1]}{\mathcal{C}}$, this means that  $\pi c=l$ and $c_{|I\times \{0\}}=L$, and for every other $\morp{k}{I\times \Delta[1]}{\mathcal{C}}$ such that $\pi k=f \ast l$ and $k_{|I\times \{0\}}=L$, there exist a unique $\morp{f'}{I\times \Delta[1]}{\mathcal{C}}$ such that $f'_{|I\times \{0\}}=c_{|I\times \{1\}}$, $f'_{|I\times \{1\}}=k_{|I\times \{1\}}$, its projection on sets is $\pi(f')=f$ and $f'\ast c=k$.

Note that the image of $c_{|I\times \{1\}}$ lies in $\Fib{\pi}{l_1}$; we claim that the colimit of $L$ is $q \ast c$ where $\morp{q}{I\times \Delta[1]}{\Fib{\pi}{l_1}}$ is the colimit of $c_{|I\times \{1\}}$.

In fact, take another cocone $\morp{k}{I\times \Delta[1]}{\mathcal{C}}$ such that $k_{|I\times \{0\}}=L$ and $k_{|I\times \{1\}}$ is constant of value $k_1$. By the universal property of $l$ (it is the colimit of $\pi L$) the functor $\pi k$ factors in a unique way as $(\pi k)\ast \bar{z}=l$ where $\morp{z}{l_1}{\pi(k_1)}$ is the morphism given by the universal property of $l_1$.

The cocone $\bar{z}$ can be lifted in a unique way to a cocone $\morp{f'}{I\times \Delta[1]}{\mathcal{C}}$ from $c_{|I\times \{1\}}$ to $k_1$ such that $f'\ast c=k$. Since $\pi$ is a fibration $f'$ factors in a unique way as $f'=\bar{\phi_{z}} q'$ where $q'$ is a cocone in $\Fib{\pi}{l_1}$ and $\morp{\phi_{z}}{z^*(k_1)}{k_1}$ is the map of a chosen cleavage. By the universal property of $q$ there is unique map $\morp{w}{q_1}{z^*(k_1)}$ in $\Fib{\pi}{l_1}$ such that $\bar{w}\ast q=q'$. It is easy to see that $\overline{\phi_z w}\ast (q \ast c)=k$ and $\phi_z w$ is the unique map with this property.

Summarizing, to compute $\varinjlim L$ we first compute $\varinjlim (\pi L)=l_1$; We consider the ''direct image`` of $L$ in $\Fib{\pi}{l_1}$:
\[
 \gfun{L_!}{I}{\Fib{\pi}{l_1}}{i}{l_i!(L(i))}
\]
(this is nothing but $c_{|I\times \{1\}}$ in the previous description).
What we have proved is that every cocone from $L$ to an object $k_1$ of $C$ factor in a unique way through the ''cocartesian cylinder'' and a cone from $L_!$ to $k_1$, and every cone from $L_!$ to $k_1$ factors in a unique way trough $\varinjlim (L_!)$. Hence $\varinjlim L \simeq \varinjlim (L_!)$.

If $\Mm$ is a cocomplete symmetric monoidal category then for every coloured operad $\mathcal{O}$ the category of algebras $\Alg{\mathcal{O}}{\Mm}$ is cocomplete. This implies in particular that the fibers of the bifibred categories $\ECat{\Mm}$, $\EOper{\Mm}$, $\ENSOper{\Mm}$, $\EROper{\Mm}$ are cocomplete. The fibers of $\EMGraph{\Mm}$,$\EMGraph{\Mm}$ and $\ERMGraph{\Mm}$ are also cocomplete; hence we can use the previous method for the description of colimits in these categories (and obtain another proof that they are all cocomplete).

\subsection{Filtered colimits in $\EGraph{\Mm}$ and $\EOper{\Mm}$} 
Even though the description of the colimits given in the previous section is very general, it might not be very effective for actual computations of colimits.

In the case in which the bifibration we consider is $\morp{\pi}{\EMGraph{\Mm}}{\Set}$ and $I$ is filtered a more explicit description can be given. 
Let $\morp{L}{I}{\EMGraph{\Mm}}$; let $L^{\sharp}=\pi L$ and let $\morp{l}{I\times\Delta[1]}{\Set}$ be its colimit cocone with vertex $l_1\in Set$. 

Every signature $s\in \Sqofc{l_1}$ is of the form $(l_i(d))$ for some $i\in I$ and $d\in \Sqofc{L^{\sharp}(i)}$. 
The couple $(i,d)$ will be called a \emph{representative} for $s$.
The colimit of $L$ is a multi-graph $C$ with set of colours $l_1$. For every $s\in \Sqofc{l_1}$ the $s$-component of $C$ is:
\begin{equation}\label{eq.filt.comp}
 C(s)=\varinjlim_{(j,f_j)\in i/I} L_j(f_j(d))
\end{equation}
where $(i,d)$ is some representative for $s$ (it can be shown that this definition does not depend on the choice of the representatives).

There is an obvious cocone  $\morp{\eta}{I\times \Delta[1]}{\EGraph{\Mm}}$ from $L$ to $C$ such that $\eta=\pi l$. It is easy to check that $\eta$ is a colimit for $L$.

Note that for every $n\in \N$ and every $s,s_1,\dots, s_n\in \Sqofc{c}$ for which it make sense to write $s\circ(s_1,\dots,s_n)$, one can find an $i\in I$ and $d,d_1,\dots,d_n\in \Sqofc{L^{\sharp}(i)}$ such that 
\begin{itemize}
 \item $(i,d)$ is a representative of $s$;
 \item for every $j\in [n]$ the couple $(i,d_j)$ is a representative of $s_j$;
 \item it makes sense to write $d\circ(d_1,\dots,d_n)$;
 \item $(i,d\circ(d_1,\dots,d_n))$ is a representative for $s\circ(s_1,\dots,s_n)$.
\end{itemize}

If $s$ is of the form $(a;a)$ for some $a\in c$ then $d$ can be chosen of the form $(v;v)$ for some $v\in \pi(L(i))$.

We remark also that given $\sigma\in \Sigma_{\card{s}}$ if $(i,d)$ is a representative for $s$, then $(i,\sigma d)$ is a representative for $\sigma s$.

The same description of filtered colimits can be given in $\ERMGraph{\Mm}$.

We want to show that the right adjoint functors in the adjunctions (\ref{eq.ff.op2}),(\ref{eq.ff.nsop2}) and (\ref{eq.ff.rop2}) create filtered colimits.

\begin{lemma}\label{lemma.adj}
Let $\Mm$ be a cocomplete monoidal closed category. 
\begin{enumerate}
\item\label{unique.case} The right adjoint functor $U_{\Opop{}}$ between $\EMGraph{\Mm}$ and $\EOper{\Mm}$  creates
filtered colimits.
 \item The right adjoint functor $U_{\Opnsop{}}$ between $\EMGraph{\Mm}$ and $\EOper{\Mm}$ creates
filtered colimits.
 \item The right adjoint functor $U_{\Oprop{}}$ between $\ERMGraph{\Mm}$ and $\EROper{\Mm}$ creates
filtered colimits.
\end{enumerate}
\end{lemma}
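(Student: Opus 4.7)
The plan is to spell out item (\ref{unique.case}) in detail; the two remaining cases are formally identical, the only differences being the replacement of $\Sqofc{-}$ by $\RSqofc{-}$ in the reduced case, and the removal of the symmetric actions in the non-symmetric case. Fix a filtered small category $I$ and a diagram $\morp{L}{I}{\EOper{\Mm}}$, and set $\bar L = U_{\Opop{}}L$. Let $\morp{\eta}{I\times\Delta[1]}{\EMGraph{\Mm}}$ be the colimit cocone of $\bar L$, with vertex $C$, and let $l_1 = \Cl{C}$ be the colimit of $\Clf \bar L$ in $\Set$, computed as in the previous subsection via the explicit formula (\ref{eq.filt.comp}). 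The goal is to equip $C$ with a unique operadic structure making each $\eta_i$ a morphism of operads, and to verify that the resulting cocone exhibits $C$ as the colimit of $L$ in $\EOper{\Mm}$.

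For every $c\in l_1$ pick a representative $(i,v)$ (so $\eta_i(v)=c$) and let $\morp{u_c}{I}{C(c;c)}$ be the composite of $u_v$ with $\eta_i$. Given $s\in\Sqofc{l_1}$ and $(s_1,\dots,s_n)$ composable with $s$, use the filteredness of $I$ together with the remark following (\ref{eq.filt.comp}) to choose a single $i\in I$ and signatures $d,d_1,\dots,d_n\in \Sqofc{\Clf\bar L(i)}$ representing $s,s_1,\dots,s_n$ such that $d\circ(d_1,\dots,d_n)$ makes sense and represents $s\circ(s_1,\dots,s_n)$. Define $\Gamma$ on $\bigl(\bigotimes_{j}C(s_j)\bigr)\otimes C(s)$ as the image under $\eta_i$ of the composition morphism of $L(i)$. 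For $\sigma\in\Sigma_{\card{s}}$ define $\sigma^*$ on $C(s)$ similarly using the symmetric morphism of $L(i)$. Independence of the choice of representative follows because any two choices become equal after passing to a common index above in $I$, and a filtered colimit identifies elements connected by such zig-zags. Associativity, unitality and equivariance are checked one diagram at a time, by picking an $i$ large enough that all signatures and all structure maps appearing in the diagram live in $L(i)$; there the required identity holds by the operad axioms for $L(i)$, and applying $\eta_i$ transports it to $C$.

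The universal property is then routine. Any cocone $(\morp{f_i}{L(i)}{P})_{i\in I}$ in $\EOper{\Mm}$ underlies a cocone of multi-graphs, hence factors uniquely as $\morp{\phi}{C}{U_{\Opop{}}P}$ in $\EMGraph{\Mm}$; using representatives once more one verifies that $\phi$ commutes with $\Gamma$, with units, and with the symmetric actions, so it is a morphism of operads, and its uniqueness at the operad level follows from uniqueness at the multi-graph level. This shows both that $\eta$ is a colimit cocone in $\EOper{\Mm}$ and that $U_{\Opop{}}$ creates it.

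The only point requiring genuine care is that the source $\bigl(\bigotimes_{j}C(s_j)\bigr)\otimes C(s)$ of the composition morphism is itself correctly identified with a filtered colimit of the corresponding objects in $\Mm$. Since $\Mm$ is closed symmetric monoidal, $-\otimes X$ is a left adjoint for every $X$, hence preserves colimits in each variable separately; combined with the fact that the diagonal of a filtered category into a finite product of copies of itself is cofinal, this yields a canonical isomorphism between $\bigl(\bigotimes_j C(s_j)\bigr)\otimes C(s)$ and the filtered colimit over common representatives of $\bigl(\bigotimes_j L(i)(d_j)\bigr)\otimes L(i)(d)$. This compatibility, which uses essentially that $I$ is filtered and not merely small, is what makes every construction above well defined and is the only substantive obstacle in the argument.
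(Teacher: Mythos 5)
Your proof is correct and follows essentially the same route as the paper: you equip the colimit of underlying multi-graphs with an operad structure via representatives, using that in a closed monoidal category tensor products commute with filtered colimits (diagonal cofinality) to make the composition, symmetry and unit maps well defined on the colimit, and you verify the axioms at a sufficiently large stage. Your explicit check of the universal property at the end is a point the paper's proof leaves to the reader, but it does not change the substance of the argument.
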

\begin{proof}
We are going to prove point \ref{unique.case}  only, the other proofs are almost identical.

 Let $I$ be a small filtered category and let $\morp{L'}{I}{\EOper{\Mm}}$. If $L=U_{\Opop{}} L'$ and $\morp{\eta}{I\times\Delta[1]}{\EMGraph{\Mm}}$ be colimit cocone of $L$ with apex $C$ described as above. It will be convenient to denote $L(i)$ by $L_i$.

we have to prove that $C$ can be endowed of an operad structure such that for every $i\in I$ $\morp{n_i}{L_i}{C}$ becomes a morphism of operads.

The structure of operad on $C$ is induced in a natural way from the ones on the $L_i$'s thanks to the fact that $\Mm$ is monoidal closed.

In fact, for every $n\in \N$ and every $s,s_1,\dots, s_n\in \Sqofc{c}$ for which it makes sense to write $s\circ(s_1,\dots,s_n)$, we can find 
$i\in I$ and $d,d_1,\dots,d_n\in \Sqofc{\pi(L_i)}$ such that
\[ 
C(s)=\varinjlim_{(j,f_j)\in i/I} L_j(f_j(d)) \ 
\]
\[
C(s_k)=\varinjlim_{(j,f_j)\in i/I} L_j(f_j(d_k)) \ 
\]
for every $k\in [n]$, and such that to write $d\circ(d_1,\dots,d_n)$ makes sense.

Since $i/I$ is filtered and $\Mm$ is monoidal closed:
\[
 (\underset{k\in [n]} \bigotimes C(s_k))\otimes C(s) \simeq \varinjlim_{(j,f_j)\in i/I} (\underset{k\in [n]} \bigotimes L_j(f_j(d_k)))\otimes L_j(f_j(d))
\]

The composition $\Gamma$ is defined as the unique map that makes the following diagram commute:
\[
\diagc{(\underset{k\in [n]} \bigotimes L_j(f_j(d_k)))\otimes L_j(f_j(d))}
      {L_j(f_j(d\circ(d_1,\dots,d_n)))}
      {(\underset{k\in [n]} \bigotimes C(s_k))\otimes C(s)}
      {C(d\circ(d_1,\dots,d_n))}{}{\Gamma_j}{}{\Gamma}{}
\]

where the vertical maps are the canonical ones defined by the colimits.

The morphism $\Gamma$ exists (and is unique) because the following diagram commutes for every morphism $\morp{g}{(j,f_j)}{(z,f_z)}$ in $i/I$:
\[
\diagc{(\underset{k\in [n]} \bigotimes L_j(f_j(d_k)))\otimes L_j(f_j(d))}
      {L_j(f_j(d\circ(d_1,\dots,d_n)))}
      {(\underset{k\in [n]} \bigotimes L_z(f_z(d_k)))\otimes L_z(f_j(d))}
      {L_z(f_z(d\circ(d_1,\dots,d_n)))}
      {\underset{k\in [n]} \bigotimes L(g)\otimes L(g)}{\Gamma_j}{ L(g)}{\Gamma_z}{}
\]
since all $L(g)$'s are morphisms of operads.

Let $s\in \Sqofc{c}$ and $(i,d)$ one of its representatives. For every $\sigma\in \Sigma_{\card{s}}$
the right action of $\sigma$ is defined as the unique map $\sigma^*$ (on the second row) that makes the following diagram commute for every $(j,f_j)\in i/I$:
\[
\diagc{L_j(f_j(d))}{L_j(f_j(d)\sigma)}{C(s)}{C(s\sigma)}{}{\sigma^*}{}{\sigma^*}{}
\]
where the $\sigma^*$ on the first row is given by the action of $\sigma$ on $L_j$ and the vertical arrows are the canonical arrows of the colimits defining $C(s)$ and $C(s\sigma)$.

The only things left to define are the identity operations. Given a colour $a \in c$ let $(i,(v;v))$ be a representative for $(c;c)$. The identity operation for $a$ is defined as the unique map $u_a$ such that the following diagram commutes for every $(j,f_j)\in i/I$:
\[
\xymatrix{& L_j(f_j(v;v)) \ar[d]\\
          I \ar[ur]^{u_{f_j(v)}} \ar[r]^{u_a} & C(a;a)}
\]
where $u_{f_j(v)}$ is the identity for $f_j(v)$ in $L_j$ and the vertical arrow is the one that comes from the definition of $C(a;a)$ as colimit.

It is left to the reader to verify that this is a good definition of operad structure and that, with this structure, all $\eta_i$'s become morphisms of operads. 

\end{proof}

The explicit description of filtered colimits of multi-graphs allows to prove the following:

\begin{lemma}\label{small.lemma}
Let $\Mm$ be a cocomplete monoidal closed category.
If $X\in \Mm$ is small respect to a certain class of maps $\mathcal{S}$ then,  $\iota_{s_n}(X)$ (see Section \ref{sec.adm.op}) is small in $\EMGraph{\Mm}$ (in $\ERMGraph{\Mm}$) respect to $\mathcal{S}$-local morphisms, for every $n\in \N$. 
\end{lemma}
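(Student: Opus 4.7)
The plan is to unwind the smallness condition using the universal property of $\iota_{s_n}$ together with the explicit formula for filtered colimits in $\EMGraph{\Mm}$ established just above in this section. Fix a regular cardinal $\kappa$ such that $X$ is $\kappa$-small with respect to $\mathcal{S}$; I claim that the same $\kappa$ works for $\iota_{s_n}(X)$ with respect to $\mathcal{S}$-local morphisms.

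First I would spell out what a morphism $\iota_{s_n}(X)\to L$ is when $L$ is an arbitrary $\Mm$-enriched multi-graph with colour set $\Cl{L}=C$. Since $\iota_{s_n}$ is left adjoint to evaluation at $s_n\in \Sqofc{[n+1]}$ in the fibre over $[n+1]$, and since a morphism in $\EMGraph{\Mm}$ out of an object of the fibre $\Mm^{\Sqofc{[n+1]}}$ amounts to a choice of map of sets $f\colon [n+1]\to C$ together with a map of multi-graphs $\iota_{s_n}(X)\to f^\ast L$, one has the natural identification
\[
\operatorname{Hom}_{\EMGraph{\Mm}}(\iota_{s_n}(X),L)\;=\;\coprod_{f\colon [n+1]\to C}\operatorname{Hom}_{\Mm}\bigl(X,L(f(s_n))\bigr).
\]

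Next, given a $\kappa$-filtered diagram $\morp{L'}{I}{\EMGraph{\Mm}}$ consisting of $\mathcal{S}$-local morphisms, with colimit $L=\varinjlim_{i}L_i$ and $C=\Cl{L}=\varinjlim_i \Cl{L_i}$ in $\Set$, I would analyse both sides of the comparison map
\[
\varinjlim_{i}\coprod_{f_i\colon [n+1]\to \Cl{L_i}}\operatorname{Hom}_{\Mm}\bigl(X,L_i(f_i(s_n))\bigr)\;\longrightarrow\; \coprod_{f\colon [n+1]\to C}\operatorname{Hom}_{\Mm}\bigl(X,L(f(s_n))\bigr).
\]
On the coproduct indices: since $[n+1]$ is finite and $I$ is filtered, every $f\colon [n+1]\to C$ factors through some $f_i\colon [n+1]\to \Cl{L_i}$, uniquely up to eventual agreement; thus the indexing coproducts are themselves computed as a filtered colimit. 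On the hom term, the explicit description of filtered colimits recalled in equation~(\ref{eq.filt.comp}) gives
\[
L(f(s_n))\;=\;\varinjlim_{(j,g)\in i/I} L_j\bigl(g(f_i(s_n))\bigr),
\]
and because each transition morphism $L_j\to L_k$ is an $\mathcal{S}$-local morphism of multi-graphs, every structure map in this filtered diagram in $\Mm$ lies in $\mathcal{S}$. By $\kappa$-smallness of $X$ with respect to $\mathcal{S}$, applying $\operatorname{Hom}_{\Mm}(X,-)$ commutes with this colimit. Combining the two observations identifies the comparison map above with an isomorphism, which is exactly $\kappa$-smallness of $\iota_{s_n}(X)$ with respect to $\mathcal{S}$-local morphisms.

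The case of $\ERMGraph{\Mm}$ is identical, since the same explicit description of filtered colimits was noted to hold there, and only the colour set $\RSqofc{C}$ replaces $\Sqofc{C}$, with $s_n\in \RSqofc{[n+1]}$ still making sense for $n\ge 1$ (and trivially for $n=0$ in the symmetric case). I do not expect a real obstacle: the only non-formal input is that filtered colimits of multi-graphs are computed signature-wise as filtered colimits in $\Mm$ whose transition maps remain in $\mathcal{S}$ whenever the original diagram consists of $\mathcal{S}$-local morphisms, which is immediate from the formula~(\ref{eq.filt.comp}); the rest is the adjunction and the finiteness of $[n+1]$.
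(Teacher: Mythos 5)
Your proof is correct and takes essentially the same route as the paper's: both reduce a morphism out of $\iota_{s_n}(X)$ via the adjunction to a colour map $[n+1]\to C$ together with a single morphism $X\to L(f(s_n))$ in $\Mm$, factor the former through a finite stage using the finiteness of $[n+1]$, and factor the latter using the signature-wise description of filtered colimits in (\ref{eq.filt.comp}), whose transition maps lie in $\mathcal{S}$, together with the smallness of $X$. The only (harmless) difference is presentational: you phrase the conclusion as a bijection of hom-set colimits over general $\kappa$-filtered diagrams, whereas the paper chases an explicit factorization along an ordinal-indexed sequence, which is the setting in which smallness is actually defined and used; restricted to such chains your argument coincides with the paper's.
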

\begin{proof}
Let $\omega$ be an ordinal number and $\morp{L}{\omega}{\EMGraph{\Mm}}$ a continuous functor.
Suppose a morphism $\morp{f}{\iota_{s_n}(X)}{\varinjlim{L}}$ is given. We have to prove that $f$ factors through some $L_j=L(j)$ for some $j\in \omega$.
This morphism is completely determined by the induced map on the colours $\morp{\Cl{f}}{[n+1]}{\Cl{\varinjlim{L}}}$ and its evaluation at the
$s_n$-level:
\begin{equation}\label{eq.small}
 \morp{f}{\iota_{s_n}(X)(s_n)=X}{\varinjlim{L}(\Cl{f}(s_n))}.
\end{equation}
Since $[n+1]$ is finite, $\Cl{f}$ factors through some $\morp{h}{[n+1]}{\Cl{L(i)}}$ for some $i\in \omega$. The couple $(i,h(s_n))$ is then a representative for $f^{\sharp}(s_n)$, and the codomain of (\ref{eq.small}) can be rewritten as $C(s)=\varinjlim_{(j,g_j)\in i/\omega} L_j(g_j(h(s_n)))$ (according to formula \ref{eq.filt.comp}), which is a transfinite composition of morphism in $\mathcal{S}$.

Since $X$ is small respect to $\mathcal{S}$, the morphism (\ref{eq.small}) factors through a morphism $\morp{f'}{X}{L_j(g_j(h(s_n))}$ for some $(j,g_j)\in i/\omega$. This morphism together with the map on the colours $\morp{g_j h}{[n+1]}{\Cl{L_j}}$ determines a morphism of multi-graphs $\morp{f'}{\iota_{s_n}(X)}{L_j}$ such that $f=k_j f'$, where $\morp{k_j}{L_j}{\varinjlim{L}}$ is the canonical map associated to the colimit.

The proof for the case of reduced multi-graphs is identical.
\end{proof}

\section{Push-out of $C$-coloured Operads}\label{App.B}

The aim of this appendix is to give a proof of Proposition \ref{prop.ap.ff}. An explicit proof of this proposition will be given only in the case of $\EOper{\Mm}$. The main ingredient of this proof is the explicit description of push-outs in $\EOperfc{\Mm}{C}$ given in Proposition \ref{exp.push}. A similar description can be given for push-outs in $\ENSOperfc{\Mm}{C}$, using planar trees instead of non-planar ones. Once such a description is given, the proof of Proposition \ref{prop.ap.ff} in the case of $\ENSOper{\Mm}$ becomes almost identical to the one for symmetric operads. Since the push-outs of reduced operads are still reduced operads, the case of $\EROper{\Mm}$ follows from the one of $\EOper{\Mm}$.

\subsection{Trees}\label{sec1}
Let $C$ be a set and $(\Mm,\otimes,\mathbb{I})$ a cocomplete monoidal category.
We want to give an explicit description of the push-outs in $\EOperfc{\Mm}{C}$.
 
Let $\Gamma$ be the category with three objects $o,x,y$, generated by the following graph:
\[
\xymatrix{o\ar[d]^{\iota_2} \ar[r]^{\iota_1} & x\\
          y & 
}.
\]

Given a category $\mathcal{C}$, a push-out diagram is just a $\Gamma$-diagram $\morp{F}{\Gamma}{\mathcal{C}}$ together with a colimit for it.
\begin{equation}\label{pushdiag}
 \diagc{F(o)}{F(x)}{F(y)}{Z}{F(\iota_1)}{F(\iota_2)}{p}{q}{}.
\end{equation}

For a description of this colimit in $\EOperfc{\Mm}{C}$ we need to introduce trees.
We are going to define for every $S\in \Sqofc{C}$ a category of trees (with a certain marking on vertices) $\trees{C}_S$ which will be helpful
to define $Z(S)$; more precisely $Z(S)$ will be a colimit indexed by $\trees{C}_S$.

Talking about operads the natural "category of trees" to work with is the dendroidal category $\Omega$ introduced by Moerdijk and Weiss (\cite{MW07},\cite{We07}). 

We are now going to briefly recall the definitions of $\Omega$ and other categories derived from it.

\subsubsection{The dendroidal category}

Following Weiss exposition a \emph{graph} will be for us a couple $(E,V)$ where $E$ is a non-empty set, called the set of \emph{edges}, and $V\subset P(E)$  such that every $e\in E$ belongs to at most two element of $V$; The set $V$ is called the set of \emph{vertices}. Given a graph $G$ the set of vertices will be denoted $\verx{G}$ and the set of edges will be indicated with $\edge{G}$.

The edges that belong to only one vertex are called \emph{outer edges} and the others are \emph{inner edges}. 

Given a graph $G=(E,V)$, a \emph{subgraph} of $G$ is a graph $(E',V')$ such that $E'\subset E$ and $V'\subset V$. A \emph{path} of length $n$ in $G$ is a f sequence
$(e_1,\dot, e_n)$ of distinct element of $E$ and a sequence $(v_1,\dots,v_{n-1})$ of distinct element of $V$ such that $\{e_i,e_{i+1}\}\subset v_i$ for every
$1\leq i <n$; the edges $e_1$ and $e_n$ are called 
the \emph{extremes of the path}. A \emph{loop} in $G$ is a path of length greater then $1$ with the extremes belonging to a common vertex.
Two edges $e,f$ of $G$ are \emph{connected} if they are extremes of a common path; this path, if it exists, can be chosen of minimal length, this minimal length is called the \emph{distance} between $e$ and $f$. A graph is \emph{connected} if every pair of edges is \emph{connected}.

A \emph{tree} is a finite connected graph with no loops and a chosen outer edge called the \emph{root}. The outer edges different from the root are called \emph{leaves}. The root of a tree $T$ will be denoted by $\troot{T}$ and the set of leaves will be called $\leav{T}$. 

A \emph{subtree} of a tree $T$ is just a subgraph of $T$ equipped with the structure of a tree whose root is the outer edge with minimal distance from $\troot{T}$.
 
Trees with only one vertex are called \emph{corollas}; in a corolla the unique vertex is always the whole set of edges. The tree with one edge and no vertices will be called the \emph{empty tree} and denoted by $|$.

The \emph{arity} of a non-empty tree $T$ is defined as the corolla $\arity{T}$ whose set of edges is the set of outer edges of $T$ and the root is $\troot{T}$.

For every vertex $v$ of a tree $T$ we will denote by $t_v$ the corolla which has $v$ as set of edges and as root the edge connected to the root of $T$ with the shortest path (one can check that this is a good definition). The set $\leav{t_v}$ is also denoted by $\leav{v}$.

Every tree $T$ freely generates a coloured operad (in $\Set$) $\Omega(T)$ whose set of colours is $\edge{T}$. We will not give a detailed description of $\Omega(T)$, but we want to recall that there is a bijective correspondence between $\underset{S\in \Sqofc{\edge{T}}}\coprod \Omega(T)(S)$ (the set of operations of $\Omega(T)$) and the couples $(t,\tau)$ where $t$ is a subtree of $T$ and $\tau$ is a total order on $\leav{t}$ and each $\Omega(T)(S)$ has at most one element.

The objects of $\Omega$ are trees and a morphism between $T$ and $T'$ is a morphism of operads between  $\Omega(T)$ and $\Omega(T')$.

Given a map $\morp{f}{T}{T'}$ in $\Omega$ and a vertex $v$, suppose that \[o_v\in \underset{S\in \Sqofc{\edge{T}}}\coprod \Omega(T)(S)\] is the operation associated to $(t_v,\tau)$ (here we can choose $\tau$ freely). The tree $f(v)$ is defined as the subtree of $T'$ associated to $f(o_v)$ (this definition does not depend on $\tau$).

Given a tree $T$ and an inner edge $e\in \edge{T}$ belonging to two vertices $v$ and $u$ the tree $T/e$ is defined as the tree having $\edge{T/e}=\edge{T}-\{e\}$ and  $\verx{T/e}=(\verx{T}-\{u,v\})\cup \{(u\cup v)- \{e\} \}$ and $\troot{T}$ as root.
There is a unique map in $\Omega$ from $T/e$ to $T$ which is the inclusion $\arr{\edge{T/e}}{\edge{T}}$ at the level of colours:
\[
\morp{\partial_e}{T/e}{T},
\]
which is called an \emph{inner face (map)}.

Given a vertex $v\in T$ with only one inner edge $e$ attached to it, the tree $T/v$ is defined as the one with $\edge{T/v}=\edge{T}-(v-\{e\})$ and $\verx{T/v}=\verx{T}-\{v\}$, the root of $T$ is $e$ if $\troot{T}\in v$ and $\troot{T}$ otherwise.
There is a unique map in $\Omega$ from $T/e$ to $T$ which is the inclusion $\arr{\edge{T/v}}{\edge{T}}$ at the level of colours:
\[
\morp{\partial_v}{T/v}{T},
\]
such a map is called an \emph{outer face (map)}. If $T$ is a corolla, given an edge $e\in \edge{T}$ the unique map from the empty tree to $T$ which maps the unique element of $|$ in $e$
\[
\morp{\partial_e}{|}{T},
\]
is also called an outer face.

Given a vertex $v$ with only two edges $d,e$, let us denote with $E'$ the set obtained as a quotient of $\edge{T}$ identifying $d$ and $e$ and let $\morp{\pi}{\edge{T}}{E'}$ be the canonical projection. The tree $T.v$ is defined in the following way: $E'$ is the set of edges and the vertices are the images of the vertices of $T$ along $\pi$. There is a unique map in $\Omega$ from $T$ to $T.v$ which is $\pi$ on the colours:
\[
\morp{\sigma_v}{T}{T.v},
\]
such a map is called a \emph{degeneracy (map)}.

We have then the following important characterization of morphisms in $\Omega$:
\begin{prop}[Lemma 3.1 \cite{MW07}, Theorem 2.2.6 \cite{We07}]\label{Wedeco}
Morphisms in $\Omega$ are generated by inner face maps, outer face maps, degeneracies and isomorphisms. More specifically every morphism $f$ can be factorized in $f=\sigma\theta\partial$ where $\partial$ is a composition of degeneracy map, $\theta$ is an isomorphism and $\sigma$ is a composition of face maps. 
\end{prop}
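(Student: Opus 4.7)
The plan is to extract the factorization from the combinatorial data underlying a morphism $f\colon T \to T'$ in $\Omega$. Unraveling the definitions, $f$ corresponds to a morphism of free operads $\Omega(T) \to \Omega(T')$, and is therefore determined by a color map $\phi\colon \edge{T} \to \edge{T'}$ together with, for each vertex $v \in \verx{T}$, a subtree $f(v) \subseteq T'$ (endowed with an ordering of its leaves). Operadic compatibility forces that whenever two vertices $u,v$ of $T$ share an edge $e$, the subtrees $f(u)$ and $f(v)$ in $T'$ are glued along $\phi(e)$.

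First I would isolate the degeneracies. Call a vertex $v$ of $T$ \emph{collapsing} if $f(v)$ equals the empty tree $|$; since the empty tree corresponds to an identity operation (valence one), any collapsing vertex must be bivalent. Degeneracies $\sigma_v$ at distinct bivalent vertices commute, so composing them over all collapsing vertices yields a well-defined degeneracy morphism $\partial\colon T \to T_1$, and $f$ factors uniquely as $f = g \circ \partial$ for some $g\colon T_1 \to T'$ sending every vertex of $T_1$ to a non-empty subtree of $T'$.

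Next I would produce the isomorphism and the face-map part. Operadic compatibility implies that the family of subtrees $\{g(v)\}_{v \in \verx{T_1}}$ is pairwise internally disjoint, meeting only along boundary edges dictated by the edges of $T_1$; their union $T'' \subseteq T'$ is therefore a connected subtree of $T'$. Relabeling the edges of $T_1$ according to $\phi$ produces a tree $T_1'$ of the same combinatorial shape as $T_1$ but with edge-set inside $\edge{T''}$; the canonical identification gives an isomorphism $\theta\colon T_1 \to T_1'$. Starting from $T_1'$, one un-contracts, via inner face maps, the edges missing at each vertex $v$ whose image $g(v)$ is not a corolla, arriving at a tree combinatorially equal to $T''$; afterwards one embeds $T'' \hookrightarrow T'$ by successively adding back the outer vertices of $T' \setminus T''$ via outer face maps. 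The resulting composition is the desired $\sigma$, and $f = \sigma \circ \theta \circ \partial$.

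The main obstacle is the combinatorial claim that the images $\{g(v)\}_v$ genuinely assemble into a well-defined subtree $T'' \subseteq T'$ whose global structure mirrors how $T_1$ is stitched from its corollas. The key identity to exploit is $f(o_u \circ_e o_v) = f(o_u) \circ_{\phi(e)} f(o_v)$, which forces $g(u)$ and $g(v)$ to meet only along $\phi(e)$ and to be composable as subtrees at that edge; propagating this identity along the edge-graph of $T_1$ yields connectedness, acyclicity, and a well-defined root for $T''$. Once this is secured, the decomposition of $T_1' \hookrightarrow T'' \hookrightarrow T'$ into inner and outer face maps is elementary, and uniqueness of the factorization up to the stated normal form follows because $\partial$ is determined by the set of collapsing vertices, $\theta$ by the color map $\phi$, and $\sigma$ by the inclusion of the image subtree together with its internal expansion.
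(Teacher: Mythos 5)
Your factorization is correct and is essentially the standard argument; note that the paper itself gives no proof of this proposition but quotes it from \cite{MW07} (Lemma 3.1) and \cite{We07} (Theorem 2.2.6), whose proofs run exactly along your lines: collapse the vertices sent to identities by degeneracies, then split the remaining map into an isomorphism (edge relabeling), inner faces (un-contracting the inner edges of the image subtrees $g(v)$), and outer faces (adjoining the complement of the image subtree in $T'$).

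One substantive remark: the step you flag as ``the main obstacle'' --- that the images $\{g(v)\}_{v}$ assemble into a well-defined subtree $T''\subseteq T'$ --- does not require propagating the compatibility identity along the edge-graph of $T_1$; it has a one-line proof using a fact the paper records explicitly. The tree $T_1$ is a subtree of itself, so (after choosing a leaf ordering) it corresponds to an operation $o_{T_1}$ of $\Omega(T_1)$, namely the composite of all the vertex generators; since $g$ is a morphism of operads, $g(o_{T_1})$ is an operation of $\Omega(T')$, and every operation of $\Omega(T')$ is by definition a pair consisting of a subtree of $T'$ and a total order on its leaves. The subtree underlying $g(o_{T_1})$ is your $T''$, and connectedness, acyclicity, the root, and the internal disjointness of the $g(v)$'s all come for free because composition in $\Omega(T')$ is grafting of subtrees. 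The same description yields the injectivity of the colour map on $\edge{T_1}$, which you need (but do not verify) for $\theta$ to be an isomorphism. Finally, since each set $\Omega(T')(S)$ has at most one element, a morphism in $\Omega$ is determined by its action on edges, so verifying $f=\sigma\theta\partial$ reduces to a check on colours, which your construction performs implicitly; the degenerate cases where $T$ or $T'$ is the empty tree $|$ deserve a word but fit your scheme (the special outer face $|\to$ corolla handles the former).
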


\begin{defi}
A \emph{planar structure} on a tree $T$ is a collection $\lambda=\{\lambda_v \}_{v\in \verx{T}}$ where $\lambda_v$ is a total order on $\leav{v}$. 

A \emph{planar tree} is a couple $(T,\lambda)$ where $T$ is a tree and $\lambda$ a planar structure on it.

A planar structure on $T$ induces a planar structure on every subtree $t$ of $T$ that will be denote by $\lambda_t$, and a planar structure on $\arity{T}$ that will be denoted by $\tau_\lambda$.
\end{defi}

\begin{defi}\label{defplanm}
A morphism of planar trees $\morp{f}{(T,\lambda)}{(T',\lambda')}$ is a morphism $\morp{f}{T}{T'}$ in $\Omega$ such that for every vertex $v\in \verx{T}$ the image of the operation associated to $(t_v,\lambda_v)$ is the operation associated to $(f(v),\tau_{\lambda'_{f(v)}})$.
\end{defi}

\begin{rmk}\label{ordrmk}
Recall that for every finite set $F$ of cardinality $n$ the set of total orders over $F$ is in bijective correspondence with the set of bijections from $F$ to $[n]$. 

With this identification,  the symmetric group $\Sigma_n$ acts from the left (by post-composition) on the set of total orders over $F$ (in a free and transitive way).
As a consequence the group $G_T=\underset{v\in \verx{T}}\prod \Sigma_{\card{v}}$ acts freely and transitively on the set of planar structures over $T$.
\end{rmk}

The following is easy to prove:
\begin{prop}
For every map of trees $\morp{f}{S}{T}$ and every planar structure $\lambda$ on $T$ there is a unique planar structure $\lambda'$ on $S$ that makes $f$ a morphism of planar trees, i.e. $\lambda f= \lambda'$.
\end{prop}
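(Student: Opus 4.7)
The plan is to construct $\lambda'$ one vertex at a time, exploiting the free and transitive action of the symmetric groups on total orders recalled in Remark \ref{ordrmk}. The key observation is that for every vertex $v\in\verx{S}$, because $f$ is a morphism of operads $\Omega(S)\to\Omega(T)$, the element $o_v$ associated to $(t_v,\tau)$ for any total order $\tau$ on $\leav{v}$ is sent to some operation in $\Omega(T)$, and by the description of operations of $\Omega(T)$ recalled in Section \ref{sec1}, this image is associated to a pair $(f(v),\mu_\tau)$ for some total order $\mu_\tau$ on $\leav{f(v)}$ (the subtree $f(v)$ is fixed because it does not depend on the chosen order).

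First I would note that $|\leav{v}|=|\leav{f(v)}|$, since an operad morphism preserves the arity of operations. Fix $v$ and let $n=|\leav{v}|$. By equivariance of $f$, for every $\sigma\in\Sigma_n$ we have $\mu_{\sigma\tau}=\sigma\mu_\tau$, so the map $\tau\mapsto\mu_\tau$ is $\Sigma_n$-equivariant. The planar structure $\lambda$ on $T$ restricts to a planar structure $\lambda_{f(v)}$ on the subtree $f(v)$, which in turn provides a distinguished total order $\tau_{\lambda_{f(v)}}$ on $\leav{f(v)}$. By Remark \ref{ordrmk}, $\Sigma_n$ acts freely and transitively on total orders, hence there is a unique $\lambda'_v$ on $\leav{v}$ with $\mu_{\lambda'_v}=\tau_{\lambda_{f(v)}}$. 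I would set the planar structure $\lambda'=\{\lambda'_v\}_{v\in\verx{S}}$.

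By construction this $\lambda'$ satisfies the condition of Definition \ref{defplanm}: for each vertex $v\in\verx{S}$, the image of the operation associated to $(t_v,\lambda'_v)$ is the operation associated to $(f(v),\tau_{\lambda_{f(v)}})$. Conversely, any planar structure on $S$ making $f$ a morphism of planar trees must, vertex by vertex, satisfy exactly this equation, and the free action of $\Sigma_n$ forces uniqueness of each $\lambda'_v$, whence uniqueness of $\lambda'$.

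The only subtlety worth double-checking is that the assignment $v\mapsto \lambda'_v$ defined locally at each vertex genuinely produces a coherent planar structure (i.e.\ no compatibility across vertices needs to be imposed beyond what is already built in). This is immediate from the definition of a planar structure as a family $\{\lambda_v\}_{v\in\verx{T}}$ of independent total orders on the sets $\leav{v}$: there are no cross-vertex constraints, so collecting the $\lambda'_v$ above yields a well-defined planar structure on $S$. Thus there is no real obstacle; the proof is a direct application of the dendroidal description of operations together with freeness of the symmetric action.
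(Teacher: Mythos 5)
Your proof is correct. Note that the paper offers no argument for this proposition at all (it is introduced with ``The following is easy to prove''), so there is no written proof to diverge from; your vertex-by-vertex construction is precisely the argument being implicitly invoked: the bijection between operations of $\Omega(T)$ and pairs (subtree, total order on leaves) shows that $f$ sends the operation of $(t_v,\tau)$ to that of $(f(v),\mu_\tau)$, equivariance of the operad map makes $\tau\mapsto\mu_\tau$ an equivariant map of $\Sigma_n$-torsors by Remark \ref{ordrmk}, hence a bijection, and Definition \ref{defplanm} then forces the unique choice $\lambda'_v$ with $\mu_{\lambda'_v}=\tau_{\lambda_{f(v)}}$; your closing observation that planar structures impose no cross-vertex compatibility is also the right thing to check. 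One small caveat: the claim $\card{\leav{v}}=\card{\leav{f(v)}}$ fails in the degenerate case where $f(v)$ is the empty tree $|$ (e.g.\ when $f$ collapses a unary vertex through a degeneracy map), since the identity operation has valence $1$ while $|$ has no leaves under the paper's conventions; the torsor comparison breaks down there as literally stated, but harmlessly so, because $\leav{v}$ is then a singleton and carries a unique total order, so $\lambda'_v$ exists and is unique trivially. With that boundary case noted, the argument is complete.
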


\begin{defi}
Consider a tree $T$ with $\card{\leav{T}}=n$, a total order $\tau$ on $\leav{T}$ and other trees $T_1,\dots,T_n$. Let $E$ be the quotient of $E'=\edge{T}\cup \edge{T_1}\cup\dots\cup \edge{T_n}$ obtained identifying $\troot{T_i}$ with $\tau^{-1}(i)$ for every $i\in [n]$ and let $\morp{\pi}{E}{E'}$ be the canonical projection. The tree $T\circ (T_1,\dots,T_n)$ (called the \emph{grafting} of $(T_1,\dots,T_n)$ over $T$) is defined as the tree having $E$ as set of edges and $\{\pi(v)\ |\ v\in \verx{T}\cup \verx{T_1}\cup\dots\cup \verx{T_n}\}$ as set of vertices. In practice $T\circ (T_1,\dots,T_n)$ should be regarded as the tree obtained by gluing the roots of $T_1\dots,T_n$ to the corresponding leaves of $T$.
\end{defi}

Let $\bar{\Omega}$ be the subcategory of $\Omega$ which has as objects all the objects of $\Omega$ and as morphisms only the ones generated by inner face maps, degeneracies and isomorphisms.

Note that in $\bar{\Omega}$ every morphism $\morp{f}{T}{T'}$ induces a morphism on the arities $\morp{f}{\arity{T}}{\arity{T'}}$ (which is always an isomorphism).

\subsubsection{$C$-trees}

\begin{defi}
A $C$-tree is a $(T,s)$ where $T$ is in $\bar{\Omega}$  and $\morp{s}{\edge{T}}{C}$ is a map of sets (called a \emph{$C$-labeling} for $T$).

A morphism of $C$-trees $\morp{f}{(A,s)}{(B,t)}$ is a morphism $\morp{f}{A}{B}$ of underlying trees such that $tf=s$ (in this equation $f$ is regarded as the underlying map on the edges).

The category of $C$-trees will be denoted $\bar{\Omega}_C$.
\end{defi}

\begin{rmk}
Given a $C$-tree $(T,s)$, the tree $\arity{T}$ and every subtree of a $C$-tree  inherit a $C$-labeling from $s$. These trees will always be considered with their induced $C$-tree structures.  

Given $C$-trees $(T,s),(T_1,s_1),\dots,(T_n,s_n)$ such that $n=\card{\leav{T}}$, and an order $\tau$ on $\leav{T}$ if $s_i(\troot{T_i})=s(\tau^{-1}(i))$ for every $i\in [n]$ the map $s\cup s_1\cup\dots\cup s_n$ factors in a $C$-labeling for $T\circ (T_1,\dots,T_n)$; under these conditions it then makes sense to talk about grafting of $C$-trees. 
\end{rmk}

\subsubsection{Push-out trees}\label{ss.pusht}

For describing the push-out (\ref{pushdiag}) we need trees that represent shapes of formal compositions of operations coming from $F(o)$, $F(x)$ and $F(y)$; for this reason we are going to put a marking on the vertices of the trees.

\begin{defi}\label{pushtdef}
A push-out $C$-tree is a couple $(T,\mf{T})$, where $T\in \bar{\Omega}_C$ and $\morp{\mf{T}}{\verx{T}}{\text{Ob}(\Gamma)}$ is a map of sets ($\Gamma$ is the category defined at the beginning of this Appendix). The map $\mf{T}$ will be called the 
\emph{marking map of $T$}.

A morphism of push-out $C$-trees $\morp{f}{(T,\mf{T})}{(T',\mf{T'})}$ is a morphism of underlying $C$-trees  $\morp{f}{T}{T'}$ such that for every $v\in \verx{T}$ and for every $u\in \verx{f(v)}$ there is a morphism in $\Gamma$ from $\mf{T}(u)$ to $\mf{T}(v)$ (N.B. this morphism goes in the "opposite direction" of $f$ since $v\in T$ and $v'\in T'$); if this morphism exists, it is unique and it will be denoted $\iota_{f,u}$.

A morphism of push-out $C$-trees will be called  \emph{marking-preserving} if for every vertex $u$ of the target $\iota_{f,u}=\id$.

A morphism of push-out $C$-trees will be called  \emph{tree-preserving} if the underlying morphism of trees is the identity.

The category of push-out trees will be denoted $\Gamma\bar{\Omega}_C$.
\end{defi}

The following proposition is straight-forward:

\begin{prop}\label{presfact}
A morphism of push-out trees $\morp{f}{(T,\mf{T})}{(S,\mf{S})}$ can always be factorized in a marking-preserving morphism followed by a tree-preserving morphism.
\end{prop}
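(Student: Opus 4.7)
The plan is to factor $f$ through an intermediate push-out tree $(S, \mf{S}')$ whose underlying $C$-tree is the codomain of $f$ but whose marking is obtained by \emph{pushing forward} $\mf{T}$ along $f$. The first leg will be $f$ itself, marking-preserving by design; the second will be $\id_S$, tree-preserving tautologically.

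The key structural fact I will need is that for every morphism $g\colon A \to B$ in $\bar{\Omega}$, the subtrees $g(v)$, as $v$ ranges over $\verx{A}$, have vertex sets that partition $\verx{B}$ (allowing empty parts). By Proposition \ref{Wedeco} it suffices to check this on the three generators. For isomorphisms it is trivial. For an inner face $\partial_e\colon B/e \to B$, the merged vertex of $B/e$ is sent to the two-vertex subtree of $B$ containing the restored edge $e$, and the other vertices are sent bijectively to singletons. For a degeneracy $\sigma_v\colon A \to A.v$, the contracted vertex $v$ is sent to an empty subtree (just an edge), while the remaining vertices are sent bijectively to singletons. Stability under composition is routine: if the vertex sets of the $f(v)$ partition $\verx{S}$ and those of the $g(u)$ partition $\verx{R}$, then the vertex sets of $(gf)(v) = g(f(v))$ are the unions $\bigcup_{u \in \verx{f(v)}} \verx{g(u)}$, and these partition $\verx{R}$.

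Given the partition, for every $u \in \verx{S}$ let $v_u \in \verx{T}$ denote the unique vertex with $u \in \verx{f(v_u)}$, and set $\mf{S}'(u) := \mf{T}(v_u)$. The first leg $f\colon (T, \mf{T}) \to (S, \mf{S}')$ is then marking-preserving by construction: $\mf{S}'(u) = \mf{T}(v)$ whenever $u \in \verx{f(v)}$, forcing $\iota_{f,u} = \id$. The second leg $\id_S\colon (S, \mf{S}') \to (S, \mf{S})$ requires for each $u \in \verx{S}$ a $\Gamma$-arrow $\mf{S}(u) \to \mf{S}'(u) = \mf{T}(v_u)$, which is exactly the arrow $\iota_{f,u}$ provided by the hypothesis that $f$ is itself a morphism of push-out trees. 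Since $\id_S \circ f = f$, the factorization is complete. The only mildly nontrivial ingredient is the vertex-partition claim; everything else is a direct definition-chase, so I do not anticipate any further obstacle.
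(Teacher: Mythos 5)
Your proof is correct and is precisely the ``straightforward'' argument the paper leaves implicit: push the marking of $(T,\mf{T})$ forward along $f$ using the fact that the vertex sets $\verx{f(v)}$ partition $\verx{S}$, so that $f$ becomes marking-preserving into $(S,\mf{S}')$, and the remaining comparison $\id_S\colon (S,\mf{S}')\to(S,\mf{S})$ is tree-preserving with the required $\Gamma$-arrows supplied by the original $\iota_{f,u}$'s. One small correction: the reduction to the three generators should be justified by the definition of $\bar{\Omega}$ (whose morphisms are by construction generated by inner faces, degeneracies and isomorphisms) rather than by Proposition \ref{presfact2}'s antecedent Proposition \ref{Wedeco}, which concerns all of $\Omega$, where outer face maps would falsify your vertex-partition claim; since push-out $C$-trees live over $\bar{\Omega}_C$, this does not affect the argument.
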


\begin{defi}
A \emph{ordered push-out $C$-tree} is a couple $(A,\lambda_A,\tau_A)$ such that $A\in \Gamma\bar{\Omega}_C$, $\lambda_A$ is a planar structure over $A$ and $\tau_A$ is a planar structure on $\arity{A}$ (i.e. a linear order on the leaves). Note that no compatibility conditions between $\lambda_A$ and $\tau_A$ are required.

A morphism of ordered push-out $C$-trees $\morp{f}{(A,\lambda_A,\tau_A)}{(B,\lambda_B,\tau_B)}$ is just a morphism of underlying push-out $C$-trees such that the order on the leaves is respected, i.e. $\tau_b f=\tau_a$ (here $f$ stands for maps on the edges, restricted to the leaves).

The category of ordered push-out $C$-trees will be denoted $\trees{C}$.
\end{defi}

We will picture ordered push-out $C$-trees in the following way:
\[
 \xygraph{
!{<0cm,0cm>;<2cm,0cm>:<0cm,-4cm>::}
!{(0,0) }*+{1}="l1"
!{(0.66,0) }*+{3}="l2"
!{(0.33,0.25) }*+[o][F]{o}="2"
!{(1,0.25) }*+{2}="l3"
!{(0.66,0.5) }*+[o][F]{x}="1"
!{(0.66,0.75) }="r"
"l1"-"2"|(0.5)*+={a}
"l2"-"2"|(0.5)*+={b}
"2"-"1"|(0.5)*+={d}
"l3"-"1"|(0.5)*+={c}
"1"-"r"|(0.6)*+={d}
}
\]
where $a,b,c,d\in C$ represent the labeling on the edges, the symbols on the vertices represent the marking and the numbers on the leaves indicate the total order chosen. The edges of each vertex are ordered from the left to the right and this determines the planar structure.

\begin{rmk}\label{rmksigma}
Note that we can associate to every morphism of ordered push-out trees $\morp{f}{(A,\lambda_A,\tau_A)}{(B,\lambda_B,\tau_B)}$ a change of planar structure $\sigma_f\in G_A$ called the \emph{planar change of $f$}. It is defined as the unique one such that $\sigma_f \cdot \lambda_A=\lambda_A'$ where $\lambda_A'$ is the unique planar structure on $A$ that makes $f$ a morphism of planar trees (see Remark \ref{ordrmk}).

A morphism $f$ such that $\sigma_f=\id$ is called a \emph{planar morphism}. 

We can also associate to $f$ an element $\pi_f\in G_{\arity{A}}\simeq G_{\arity{B}}\simeq \Sigma_n$ (where $n$ is the number of leaves of $A$), called the \emph{leaves permutation of $f$}; it is defined as the planar change of the induced map $\morp{f}{(\arity{A},\tau_A,\tau_A)}{(\arity{B},\tau_B,\tau_B)}$.

Given an ordered push-out tree $(t,\lambda,\tau)$ there is a unique planar structure $\tau_\lambda$ induced on $\arity{t}$ (see Definition \ref{defplanm}). The planar change of the map $\arr{(\arity{t},\tau_\lambda,\tau)}{(\arity{t},\tau,\tau)}$ is called the \emph{twisting} of $(t,\lambda,\tau)$.
\end{rmk}

Using Proposition \ref{presfact} and \ref{Wedeco} it is easy to prove the following:

\begin{prop}\label{presfact2}
A map of ordered push-out trees $f$ can be factorized as $f=g h \theta$ where $h$ is planar tree-preserving, $g$ is a composition of planar marking-preserving inner-face and degeneracy morphisms and $\theta$ is a marking-preserving isomorphism.
\end{prop}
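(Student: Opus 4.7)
The strategy is to combine Propositions \ref{presfact} and \ref{Wedeco}, and absorb all ordering mismatches into a marking-preserving isomorphism on the right. First I would apply Proposition \ref{presfact} to the underlying morphism of push-out $C$-trees of $f$ to get $f = f_{\mathrm{tp}} \circ f_{\mathrm{mp}}$, with $f_{\mathrm{mp}}$ marking-preserving and $f_{\mathrm{tp}}$ tree-preserving. The tree-preserving factor $f_{\mathrm{tp}}$, having the identity as its underlying tree map, will play the role of $h$ once I equip its source with orderings inherited from $B$, which is automatic since its underlying tree, set of leaves, and so on coincide with those of $B$.

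Next I would decompose the underlying tree morphism of $f_{\mathrm{mp}}$ using Proposition \ref{Wedeco} within $\bar{\Omega}$, obtaining a factorization $\sigma \circ \theta' \circ \partial$ where $\sigma$ is a composition of inner face maps, $\theta'$ is an isomorphism, and $\partial$ is a composition of degeneracies. A standard relabeling argument lets me slide the isomorphism past the degeneracies to the rightmost position: given $\partial \colon T_0 \to T_1$ and the iso $\theta' \colon T_1 \to T_2$, there exist an iso $\theta_0 \colon T_0 \to T_0''$ and a degeneracy $\partial'' \colon T_0'' \to T_2$ with $\theta' \partial = \partial'' \theta_0$ (just transport $T_0$ along the relabeling encoded by $\theta'$). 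Iterating yields $f_{\mathrm{mp}} = (\sigma \partial'') \theta_0$, with the isomorphism $\theta_0$ on the right. Since $f_{\mathrm{mp}}$ is marking-preserving and the isomorphism $\theta_0$ only permutes edges, it is marking-preserving at the push-out tree level.

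To handle planar structures and leaf orders I would propagate the orderings of $B$ backward through each elementary factor in $\sigma \partial''$, using the proposition stated just before Definition \ref{defplanm} to pick a planar structure on each intermediate object that makes the corresponding inner-face or degeneracy map planar in the sense of Remark \ref{rmksigma}. This makes $g := \sigma \partial''$ a composition of planar marking-preserving inner-face and degeneracy morphisms. All residual discrepancy between $(\lambda_A, \tau_A)$ and the propagated planar structure and leaf order on the source of $g$ gets packaged into a single marking-preserving isomorphism $\theta \colon A \to A'$ sitting first in the factorization; by Remark \ref{ordrmk}, this amounts to choosing elements of $G_A$ and of a symmetric group acting on leaf orders, which do not disturb the underlying push-out $C$-tree.

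The main obstacle is the consistency of the ordering choices and the verification that the leaf-order compatibility condition $\tau_{\mathrm{tgt}} f' = \tau_{\mathrm{src}}$ holds along each elementary step and at $\theta$. This reduces to a bookkeeping exercise using the free and transitive actions of Remark \ref{ordrmk}, together with the fact that each elementary tree operation in $\bar{\Omega}$ has a well-defined effect on planar structures and leaf orders.
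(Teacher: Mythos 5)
Your proof is correct and follows exactly the route the paper intends: the paper offers no argument beyond asserting that the statement follows easily from Propositions \ref{presfact} and \ref{Wedeco}, which is precisely the combination you spell out, together with the unique planar lift and the free transitive actions of Remark \ref{ordrmk} for the ordering bookkeeping. The one discrepancy is in the order of the factors, and it is harmless: you produce (marking-preserving isomorphism) first, then the planar marking-preserving inner faces and degeneracies, then the planar tree-preserving morphism last, which is the order forced by Proposition \ref{presfact} (marking-preserving \emph{followed by} tree-preserving) and is in fact the only viable reading of the paper's formula $f=gh\theta$, since performing the marking change before the marking-preserving faces would force each face to create a constantly-marked image subtree and so could never reach a target with mixed markings; for the paper's only use of this proposition (determining $\hat{Z}(S)$ on all morphisms from the four generating cases) any such arrangement suffices.
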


\begin{rmk}
Every signature in $C$ can be seen as a $C$-corolla. 
This identification associates to $S=(c_1,\dots,c_n;c)\in \Sqofc{C}$ the triple $(C_n,s_S,\tau)$ where $C_n$  is the $n$-corolla, $\morp{s_S}{\leav{C_n}}{C}$ is the $C$-labeling such that $s_S(i)=c_i$ for every $n\in [n]$ and $s_S(\ast)=c$ and $\tau$ is the usual order on $[n]=\leav{C_n}$. This corolla will be also denoted by $S$. With $S_x$,$S_y$,$S_o$ we will indicate $S$ together with a marking $x,y$ or $o$ on the unique vertex. 

\[
\begin{aligned}
\\
\\
 (c_1,\dots,c_n;c) \quad \rightsquigarrow
\\
\end{aligned} 
\quad
\xygraph{
!{<0cm,0cm>;<3cm,0cm>:<0cm,-2.0cm>::}
!~-{@{-}@[|(8.5)]}
!{(0,0) }="1"
!{(0.33,0)}="2"
!{(0.66,0)}="3"
!{(1,0)}="4"
!{(0.5,0.5)}*{\bullet}="v"
!{(0.5,1)}="r"
"1"-"v"|(0.4){c_1}
"2"-"v"|(0.4){c_2}
"3"-"v"|(0.4){c_3}
"4"-"v"|(0.4){c_4}
"r"-"v"|(0.4){c}
}
\]

Conversely to every ordered $C$-corolla $(A,\tau)$ we can associate a signature in $C$.

For every ordered $C$-tree $(T,\lambda,\tau)$ the signature corresponding to $(\arity{T},\tau)$ will be also denoted by $\arity{T}$. 
\end{rmk}

\begin{rmk}\label{treecomp}
Let us forget for a moment about the marking on the vertices. A $C$-tree $t$ together with a planar structure $\lambda_t$ and an order on the leaves $\tau_t$ should be regarded
as a scheme of composition for operations in $C$-coloured operads. This means that given a $C$-coloured operad $\mathcal{O}$ (in $\Set$) and an operation in
$\mathcal{O}$ of signature $a(\arity{v})$ (i.e. an element of $\mathcal{O}(\arity{v})$) for every $v\in \verx{t}$, we should be able to compose them following the shape of $t$
and get an operation in $\mathcal{O}$ of signature $\arity{t}$. 
More formally given a $C$-coloured operad $\mathcal{O}$ we can build a map
\[
\morp{\Gamma_t}{\underset{v\in \verx{t}}\bigotimes \mathcal{O}(\arity{v})}{\mathcal{O}(\arity{t})}.
\]

that will be called \emph{composition along $t$}.

It is defined by induction on $m$, the number of vertices of $t$:
\begin{itemize}
\item If $m=0$ then $t=|_c$, then $\mathcal{O}(\arity{t})=I$ and the map is just the identity of $c$ in $\mathcal{O}$:
\[
\morp{u_c}{I}{\mathcal{O}(c;c)};
\]
\item If $m=1$ then $t$ is a corolla and the map is the symmetry map 
\[
\morp{\sigma^*}{\mathcal{O}(\arity{v})}{\mathcal{O}(\arity{t})},
\]
where $\sigma\in \Sigma_{\card{v}}$ is the unique one such that $\sigma \lambda_{t,v}=\tau_t$;
\item If $m>1$ suppose that $\Gamma_t$ was already defined for all the trees such that $\card{\verx{t}}<m$. Suppose that the decomposition of $t$ is $t=s\circ (s_1,\dots,s_n)$ and that $\sigma$ is the twisting of $(t,\lambda_t,\tau_t)$, then

\[\underset{v\in verx{t}}\bigotimes \mathcal{O}(\arity{v})\simeq 
\underset{i\in [n]}\bigotimes (\underset{v\in \verx{s_i}}\bigotimes \mathcal{O}(\arity{v}))\otimes \bigotimes (\underset{u\in \verx{s}}\bigotimes \mathcal{O}(\arity{u}))\]

(the isomorphism is given by symmetric isomorphisms of $\Mm$); the morphism $\Gamma_t$ is defined as $\sigma^{-1*}\gamma \circ(\underset{i\in [n]}\bigotimes \Gamma_{s_i})\otimes \Gamma_s$, where 
\[
\morp{\gamma}{(\underset{i\in [n]}\bigotimes \mathcal{O}(\arity{s_i}))\otimes \mathcal{O}(\arity{s})}{\mathcal{O}(\arity{t,\tau\lambda_t,\tau\lambda_t})}
\]
is the usual composition in $\mathcal{O}$. 
\end{itemize}

It is easy to check that for every morphism of $C$-coloured operads $\morp{f}{\mathcal{O}}{\mathcal{P}}$ the equality $f_{\arity{t}}\Gamma_t=\Gamma_t \circ (\underset{v\in \verx{t}}\bigotimes f_{\arity{v}})$ holds. 

\end{rmk}

\begin{defi} For every $S\in \Sqofc{C}$, by $\trees{C}_S$ we will denote the subcategory of $\trees{C}$ spanned by all objects $(A,\lambda_A,\tau_A)\in \trees{C}$ such that $(\arity{A},\tau_A)=S$.
\end{defi}

\subsection{Description of push-outs}\label{secpushfc}

Recall that we want to describe push-outs in the category $\EOperfc{\Mm}{C}$, i.e. colimits of functors $\morp{F}{\Gamma}{\EOperfc{\Mm}{C}}$, as explained at the beginning of Section \ref{sec1}. We will denote $F(o)$,$F(x)$ and $F(y)$ by $F_o$,$F_x$ and $F_y$ respectively.

\subsubsection{Digression on $\LaxSCat{\Mm}$}

Let $\LaxSCat{\Mm}$ be the category defined as follows: the objects are the functors of the  kind $\morp{F}{\mathcal{C}}{\Mm}$ where $\mathcal{C}$ is a ( not fixed) small category and a morphism  between $\morp{F}{\mathcal{C}}{\Mm}$ and $\morp{G}{\mathcal{D}}{\Mm}$ is a couple $(h,\alpha)$ where $\morp{h}{\mathcal{C}}{\mathcal{D}}$ and $\alpha$ is a natural transformation between $F$ and $G\circ h$. Compositions and identities are defined in the evident way.


Since $\Mm$ is a symmetric monoidal category, $\LaxSCat{\Mm}$ comes endowed with a tensor product
$\morp{F\otimes G}{\mathcal{C}\times \mathcal{D}}{\Mm}$ such that $F\otimes G((C,D))=F(C)\otimes F(D)$.
The unit of $\otimes$ is the functor $\morp{\hat{I}}{\ast}{\Mm}$ whose image is $I$, the unit of $\Mm$.
 
There is a functor 

\[
\gfun{\varinjlim}{\Cat//\Mm}{\Mm}{F}{\varinjlim F},
\]
which has a right adjoint $\tilde{(-)}$ which associates to every $V\in \Mm$ the functor 
\[
 \gfun{\tilde{V}}{\ast}{\Mm}{\ast}{V\, .}
\]

The functors $\varinjlim$ and $\tilde{(-)}$ are both strong monoidal since $\Mm$ is monoidal closed.

From a ($C$-coloured) operad $\mathcal{O}=\{\mathcal{O(S)}\}_{S\in \Sqofc{C}}$ in $\LaxSCat{\Mm}$ we get a ($C$-coloured) operad $\varinjlim\mathcal{O}=\{\varinjlim \mathcal{O(S)}\}_{S\in \Sqofc{C}}$ in $\Mm$.

\subsubsection{Description of $Z$} We are now going to describe an operad $\hat{Z}$ in $\LaxSCat{\Mm}$ the colimit of which will be the desired push-out $Z$ (see diagram \ref{pushdiag}).

For every $S\in \Sqofc{C}$ the $S$-component of $\hat{Z}$ is given by the functor:
\begin{equation}\label{tree.functor}
\gfun{\hat{Z}(S)}{\opc{\trees{C}_S}}{\Mm}{(T,\lambda_T,\tau_T)}{\underset{v\in \verx{T}}\bigotimes F_{\mf{T}(v)}(\arity{v})}
\end{equation}
Some pictures at this moment might be helpful. For example the functor $\hat{Z}$ associate to the tree on the left the tensor product on the right:
\[
\xygraph{
!{<0cm,0cm>;<2cm,0cm>:<0cm,-4cm>::}
!{(0,0) }*+{1}="l1"
!{(0.66,0) }*+{2}="l2"
!{(0.33,0.25) }*+[o][F]{o}="2"
!{(1,0.25) }*+{3}="l3"
!{(0.66,0.5) }*+[o][F]{x}="1"
!{(0.66,0.75) }="r"
"l1"-"2"|(0.5)*+={a}
"l2"-"2"|(0.5)*+={b}
"2"-"1"|(0.5)*+={d}
"l3"-"1"|(0.5)*+={c}
"1"-"r"|(0.6)*+={d}
}
\quad
\xygraph{
!{<0cm,0cm>;<2cm,0cm>:<0cm,-4cm>::}
!{(0.5,0.3) }*+{\longmapsto}="1"
}
\quad
\xygraph{
!{<0cm,0cm>;<2cm,0cm>:<0cm,-4cm>::}
!{(0,0) }="l1"
!{(0.66,0) }="l2"
!{(0.33,0.25) }*+{F_o(a,b;d)}="2"
!{(1,0.25) }="l3"
!{(0.66,0.5) }*+{F_x(d,c;d).}="1"
!{(0.66,0.75) }="r"
"2"-@{}"1" |(0.5)*+={\otimes}
}
\]


For the definition of $\hat{Z}(S)$ on a morphism $\morp{f}{(A,\lambda_A,\tau_A)}{(B,\lambda_B,\tau_B)}$ let us consider several cases:

\begin{itemize}
\item if $f$ is planar and tree-preserving then $\hat{Z}(S)(f)$ is
$\otimes\, F(\iota_{f,u})_{\arity{u}}$ ( recall that the $\iota_{f,u}$'s were defined in Definition \ref{pushtdef}) ;
\[
\xygraph{
!{<0cm,0cm>;<2cm,0cm>:<0cm,-4cm>::}
!{(0,0) }*+{1}="l1"
!{(0.66,0) }*+{2}="l2"
!{(0.33,0.25) }*+[o][F]{y}="2"
!{(1,0.25) }*+{3}="l3"
!{(0.66,0.5) }*+[o][F]{x}="1"
!{(0.66,0.75) }="r"
"l1"-"2"|(0.5)*+={a}
"l2"-"2"|(0.5)*+={b}
"2"-"1"|(0.5)*+={d}
"l3"-"1"|(0.5)*+={c}
"1"-"r"|(0.6)*+={d}
}
\quad
\xygraph{
!{<0cm,0cm>;<2cm,0cm>:<0cm,-4cm>::}
!{(0.5,0.3) }*+{\longleftarrow}="1"
}
\quad
\xygraph{
!{<0cm,0cm>;<2cm,0cm>:<0cm,-4cm>::}
!{(0,0) }*+{1}="l1"
!{(0.66,0) }*+{2}="l2"
!{(0.33,0.25) }*+[o][F]{o}="2"
!{(1,0.25) }*+{3}="l3"
!{(0.66,0.5) }*+[o][F]{x}="1"
!{(0.66,0.75) }="r"
"l1"-"2"|(0.5)*+={a}
"l2"-"2"|(0.5)*+={b}
"2"-"1"|(0.5)*+={d}
"l3"-"1"|(0.5)*+={c}
"1"-"r"|(0.6)*+={d}
}
\quad
\]
\[
\xygraph{
!{<0cm,0cm>;<2cm,0cm>:<0cm,-4cm>::}
!{(0,0) }="l1"
!{(0.66,0) }="l2"
!{(0.33,0.25) }*+{F_o(a,b;d)}="2"
!{(1,0.25) }="l3"
!{(0.66,0.5) }*+{F_x(d,c;d)}="1"
!{(0.66,0.75) }="r"
"2"-@{}"1" |(0.5)*+={\otimes}
}
\quad
\xygraph{
!{<0cm,0cm>;<2cm,0cm>:<0cm,-4cm>::}
!{(0.5,0.4) }*+{\longrightarrow}="1"
}
\quad
\xygraph{
!{<0cm,0cm>;<2cm,0cm>:<0cm,-4cm>::}
!{(0,0) }="l1"
!{(0.66,0) }="l2"
!{(0.33,0.25) }*+{F_y(a,b;d)}="2"
!{(1,0.25) }="l3"
!{(0.66,0.5) }*+{F_x(d,c;d)}="1"
!{(0.66,0.75) }="r"
"2"-@{}"1" |(0.5)*+={\otimes}
}
\]
\item If $f$ is marking-preserving and the underlying morphism $\morp{f}{A}{B}$ is an isomorphism, $\hat{Z}(S)(f)$ is defined as $\theta (\underset{v\in \verx{A}}\otimes{\sigma_{f,v}^*})$ where $\sigma_{f,v}^*$ is the symmetry morphism on the $\arity{v}$ component of the operad $F_{\mf{v}}$ associated to $\sigma_{f,v}\in \Sigma_{\card{v}}$ (see Remark \ref{rmksigma}); $\theta$ is just the appropriate symmetric isomorphism of $\Mm$ (that comes with the structure of symmetric monoidal category);
\[
\xygraph{
!{<0cm,0cm>;<2cm,0cm>:<0cm,-4cm>::}
!{(0,0) }*+{1}="l1"
!{(0.66,0) }*+{2}="l2"
!{(0.33,0.25) }*+[o][F]{y}="2"
!{(1,0.25) }*+{3}="l3"
!{(0.66,0.5) }*+[o][F]{x}="1"
!{(0.66,0.75) }="r"
"l1"-"2"|(0.5)*+={a}
"l2"-"2"|(0.5)*+={b}
"2"-"1"|(0.5)*+={d}
"l3"-"1"|(0.5)*+={c}
"1"-"r"|(0.6)*+={d}
}
\quad
\xygraph{
!{<0cm,0cm>;<2cm,0cm>:<0cm,-4cm>::}
!{(0.5,0.3) }*+{\longleftarrow}="1"
}
\quad
\xygraph{
!{<0cm,0cm>;<2cm,0cm>:<0cm,-4cm>::}
!{(1,0) }*+{1}="l1"
!{(0.33,0) }*+{2}="l2"
!{(0.66,0.25) }*+[o][F]{y}="2"
!{(0,0.25) }*+{3}="l3"
!{(0.33,0.5) }*+[o][F]{x}="1"
!{(0.33,0.75) }="r"
"l1"-"2"|(0.5)*+={a}
"l2"-"2"|(0.5)*+={b}
"2"-"1"|(0.5)*+={d}
"l3"-"1"|(0.5)*+={c}
"1"-"r"|(0.6)*+={d}
}
\quad
\]
\[
\xygraph{
!{<0cm,0cm>;<2cm,0cm>:<0cm,-4cm>::}
!{(0,0) }="l1"
!{(0.66,0) }="l2"
!{(0.33,0.25) }*+{F_y(a,b;d)}="2"
!{(1,0.25) }="l3"
!{(0.66,0.5) }*+{F_x(d,c;d)}="1"
!{(0.66,0.75) }="r"
"2"-@{}"1" |(0.5)*+={\otimes}
}
\quad
\xygraph{
!{<0cm,0cm>;<2cm,0cm>:<0cm,-4cm>::}
!{(0.5,0.4) }*+{\longrightarrow}="1"
}
\quad
\xygraph{
!{<0cm,0cm>;<2cm,0cm>:<0cm,-4cm>::}
!{(0.66,0.25) }*+{F_y(b,a;d)}="2"
!{(0,0.25) }="l3"
!{(0.33,0.5) }*+{F_x(c,d;d)}="1"
!{(0.33,0.75) }="r"
"2"-@{}"1" |(0.5)*+={\otimes}
}
\]

\item If $f$ is planar ,marking-preserving and the underlying morphism of trees $\morp{f}{A}{B}$ is an inner face $\partial_e$, then $\hat{Z}(S)(f)$ is defined using the composition of the operad which marks the extremes of $e$;
\[
\xygraph{
!{<0cm,0cm>;<2cm,0cm>:<0cm,-4cm>::}
!{(0,0) }*+{1}="l1"
!{(0.66,0) }*+{2}="l2"
!{(0.33,0.25) }*+[o][F]{x}="2"
!{(1,0.25) }*+{3}="l3"
!{(0.66,0.5) }*+[o][F]{x}="1"
!{(0.66,0.75) }="r"
"l1"-"2"|(0.5)*+={a}
"l2"-"2"|(0.5)*+={b}
"2"-"1"|(0.5)*+={d}
"l3"-"1"|(0.5)*+={c}
"1"-"r"|(0.6)*+={d}
}
\quad
\xygraph{
!{<0cm,0cm>;<2cm,0cm>:<0cm,-4cm>::}
!{(0.5,0.3) }*+{\longleftarrow}="1"
}
\quad
\xygraph{
!{<0cm,0cm>;<2cm,0cm>:<0cm,-4cm>::}
!{(0,0.25) }*+{1}="l1"
!{(0.5,0.25) }*+{2}="l2"
!{(1,0.25) }*+{3}="l3"
!{(0.5,0.5) }*+[o][F]{x}="1"
!{(0.5,0.75) }="r"
"l1"-"1"|(0.5)*+={a}
"l2"-"1"|(0.5)*+={b}
"l3"-"1"|(0.5)*+={c}
"1"-"r"|(0.6)*+={d}
}
\]
\[
\xygraph{
!{<0cm,0cm>;<2cm,0cm>:<0cm,-4cm>::}
!{(0,0) }="l1"
!{(0.66,0) }="l2"
!{(0.33,0.25) }*+{F_x(a,b;d)}="2"
!{(1,0.25) }="l3"
!{(0.66,0.5) }*+{F_x(d,c;d)}="1"
!{(0.66,0.75) }="r"
"2"-@{}"1" |(0.5)*+={\otimes}
}
\quad
\xygraph{
!{<0cm,0cm>;<2cm,0cm>:<0cm,-4cm>::}
!{(0.5,0.4) }*+{\longrightarrow}="1"
}
\quad
\xygraph{
!{<0cm,0cm>;<2cm,0cm>:<0cm,-4cm>::}
!{(0.5,0.4) }*+{F_x(a,b,c;d)}="1"
}
\quad
\]
\item If $f$ is planar ,marking-preserving and the underlying morphism of trees $\morp{f}{A}{B}$ is a degeneracy map $\sigma_v$ then $\hat{Z}(S)$ is defined using the natural unit isomorphism of $\Mm$ and the identity of $F_{\mf{v}}(\arity{v})$ (note that $\arity{v}=(c;c)$ for some $c\in C$).
\[
\xygraph{
!{<0cm,0cm>;<2cm,0cm>:<0cm,-4cm>::}
!{(0,0) }*+{1}="l1"
!{(1,0.25) }*+{2}="l3"
!{(0.66,0.5) }*+[o][F]{x}="1"
!{(0.66,0.75) }="r"
"l1"-"1"|(0.5)*+={a}
"l3"-"1"|(0.5)*+={c}
"1"-"r"|(0.6)*+={d}
}
\quad
\xygraph{
!{<0cm,0cm>;<2cm,0cm>:<0cm,-4cm>::}
!{(0.5,0.3) }*+{\longleftarrow}="1"
}
\quad
\xygraph{
!{<0cm,0cm>;<2cm,0cm>:<0cm,-4cm>::}
!{(0,0) }*+{1}="l1"
!{(0.33,0.25) }*+[o][F]{o}="2"
!{(1,0.25) }*+{2}="l3"
!{(0.66,0.5) }*+[o][F]{x}="1"
!{(0.66,0.75) }="r"
"l1"-"2"|(0.5)*+={a}
"2"-"1"|(0.5)*+={a}
"l3"-"1"|(0.5)*+={c}
"1"-"r"|(0.6)*+={d}
}
\]
\[
\xygraph{
!{<0cm,0cm>;<2cm,0cm>:<0cm,-2.5cm>::}
!{(0,0) }*+{}="l1"
!{(0.66,0) }*+{}="l2"
!{(0.33,0.25) }*+{}="2"
!{(1,0.25) }*+{}="l3"
!{(0.66,0.5) }*+{F_x(a,c;d)}="1"
!{(0.66,0.75) }="r"
}
\quad
\xygraph{
!{<0cm,0cm>;<2cm,0cm>:<0cm,-2.5cm>::}
!{(0.5,0.4) }*+{\longrightarrow}="1"
}
\quad
\xygraph{
!{<0cm,0cm>;<2cm,0cm>:<0cm,-2.5cm>::}
!{(0.33,0.25) }*+{I}="2"
!{(0.66,0.5) }*+{F_x(a,c;d)}="1"
!{(0.66,0.75) }="r"
"2"-@{}"1"|(0.5)*+={\otimes}
}
\quad
\xygraph{
!{<0cm,0cm>;<2cm,0cm>:<0cm,-2.5cm>::}
!{(0.5,0.4) }*+{\longrightarrow}="1"
}
\quad
\xygraph{
!{<0cm,0cm>;<2cm,0cm>:<0cm,-2.5cm>::}
!{(0.33,0.25) }*+{F_o(a;a)}="2"
!{(0.66,0.5) }*+{F_x(a,c;d)}="1"
!{(0.66,0.75) }="r"
"2"-@{}"1"|(0.5)*+={\otimes}
}
\]
\end{itemize}

These four cases fully determined the definition of $\hat{Z}(S)$ on morphisms (Proposition \ref{presfact2}). It is routine to check that this is a good definition.

We want to define a $C$-operad structure on the collection $\{\hat{Z}(S)\}_{S\in \Sqofc{C}}$.

First notice that for every signature $S\in\Sqofc{C}$ and every $\sigma\in\Sigma_{\card{S}}$ there is a morphism $\morp{\sigma^*}{\hat{Z}(S)}{\hat{Z}(\sigma^{-1} (S))}$ given by the couple $(F_\sigma,\mu_\sigma)$. The functor $\morp{F_\sigma}{\opc{\trees{C}_S}}{\opc{\trees{C}_{\sigma^{-1}S}}}$ sends a tree $(T,\lambda_T,\tau_T)$ to $(T,\lambda_T,\sigma^{-1}\tau_T)$ and sends a morphism to the unique one having the same underlying morphism of (push-out $C$-labeled) trees. The natural transformation $\ntra{\mu_\sigma}{\hat{Z}(S)}{\hat{Z}(\sigma^{-1}S) F_\sigma}$ is just the identity on every object.

\begin{prop}
There is an operad structure on the collection $\{\hat{Z}(S)\}_{S\in \Sqofc{C}}$. 
\end{prop}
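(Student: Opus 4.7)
The plan is to define the operad structure on $\{\hat Z(S)\}_{S\in\Sqofc{C}}$ tree-by-tree, exploiting that grafting of trees is precisely the combinatorial shadow of operadic composition. A morphism in $\LaxSCat{\Mm}$ consists of a functor between the indexing categories together with a natural transformation on $\Mm$-values, so for every composable sequence $s,t_1,\dots,t_n$ in $\Sqofc{C}$ I would produce such a pair defining the composition
\[
\Gamma\colon \hat Z(t_1)\otimes\cdots\otimes \hat Z(t_n)\otimes \hat Z(s)\longrightarrow \hat Z(s\circ(t_1,\dots,t_n)).
\]
Concretely, the functor part sends a tuple $((T_i,\lambda_i,\tau_i)_{i=1}^n,(S,\lambda_S,\tau_S))$ to the grafted ordered push-out tree $S\circ_{\tau_S}(T_1,\dots,T_n)$, whose planar structure, marking, and leaf order are the evident ones inherited from the pieces; since $t_i$ is composable with $s$, the $C$-labelings agree on the glued edges. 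The natural transformation is then the canonical isomorphism of $\Mm$
\[
\bigotimes_{i=1}^n \Bigl(\bigotimes_{v\in\verx{T_i}} F_{\mf{T_i}(v)}(\arity{v})\Bigr)\otimes \bigotimes_{u\in\verx{S}} F_{\mf{S}(u)}(\arity{u}) \;\simeq\; \bigotimes_{w\in\verx{S\circ(T_1,\dots,T_n)}} F_{\mf{}(w)}(\arity{w})
\]
induced by the symmetric-monoidal structure of $\Mm$, using $\verx{S\circ(T_1,\dots,T_n)}=\verx{S}\sqcup\bigsqcup_i \verx{T_i}$.

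Next I would define the identity operation $u_c\colon \hat I\to \hat Z(c;c)$ using the empty $C$-tree $|_c\in\trees{C}_{(c;c)}$. Since $|_c$ has no vertices, $\hat Z(c;c)(|_c)$ is the empty tensor product $I$, so the pair $(\widehat{\,|_c\,},\id_I)$ (picking out $|_c$ and using the identity of $I$) provides $u_c$. The symmetric action $\sigma^\ast$ was already produced in the excerpt as $(F_\sigma,\mu_\sigma)$, so all structure maps are in place.

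It then remains to check the associativity, unitality, and equivariance axioms. Associativity reduces to the well-known fact that iterated grafting of trees is strictly associative, and that the canonical associators of $\otimes$ in $\Mm$ assemble into a commuting diagram of natural transformations; this is a diagram-chase using only the symmetric-monoidal coherence of $\Mm$. Unitality reduces to the observation that grafting with $|_c$ on a leaf colored $c$ returns the original tree literally (no degeneracy is needed for this direction, so the required equality holds on the nose after identifying empty tensor products with $I$). For the composite with an identity at the root, one uses that the corolla with a single $|_c$ grafted above yields, under $\hat Z$, the same $\arity{v}$-factor up to the unit isomorphism $I\otimes(-)\simeq(-)$. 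Equivariance is verified by tracking how $\sigma^\ast$ reindexes leaves: the functorial relabeling $F_\sigma$ commutes with grafting up to a permutation of the $T_i$'s, and the natural transformations $\mu_\sigma$ match the symmetric isomorphism used in the canonical identification above.

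The main obstacle I expect is the bookkeeping for equivariance and for the compatibility between the planar changes $\sigma_f$, the leaf permutations $\pi_f$, and the twistings introduced in Remark~\ref{rmksigma}. Because the functors $\hat Z(S)$ are defined on $\opc{\trees{C}_S}$ (which encodes both a planar structure and an independent order on leaves), one must show that the natural transformation part of $\Gamma$ is genuinely natural with respect to all the generating morphism types listed in Proposition~\ref{presfact2}; the tree-preserving planar case and the inner-face case are immediate, but the marking-preserving isomorphism case requires invoking the equivariance of the composition in $F_o$, $F_x$, and $F_y$ simultaneously, and checking that the resulting symmetric isomorphism of $\Mm$ agrees with the one appearing in the canonical identification of tensor products over $\verx{S\circ(T_1,\dots,T_n)}$.
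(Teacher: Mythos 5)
Your proposal matches the paper's proof essentially verbatim: the composition is the pair $(\gamma,\mu)$ with $\gamma$ given by grafting of ordered push-out $C$-trees and $\mu$ by the symmetric isomorphisms of $\Mm$, the identity on $c$ is carried by the empty tree $|_c$ with the identity of $I$, and the symmetric action is the $(F_\sigma,\mu_\sigma)$ already constructed. The axiom checks you sketch (strict associativity of grafting plus monoidal coherence, unitality via grafting with $|_c$, equivariance via the interaction of $F_\sigma$ with the reindexing isomorphisms) are exactly the verifications the paper declares routine and leaves to the reader, so your argument is correct and follows the same route.
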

\begin{proof}
For every $n\in \N$, for every $S\in \Sqofc{C}$ such that $\card{S}=n$, and for every $S_1,\dots,S_n \in \Sqofc{C}$ such that $(\troot{S_1},\dots,\troot{S_n})=\leav{S}$ we can define the composition morphism:
\[
\morp{\circ}{\hat{Z}(S_1)\otimes \dots \otimes\hat{Z}(S_n)\otimes \hat{Z}(S)}{\hat{Z}(S\circ(S_1,\dots,S_n))}
\]

as the couple $(\gamma,\mu)$, where $\gamma$ is the functor
\[
\gfun{\gamma}{\opc{\trees{C}_{S_1}}\times \dots \times \opc{\trees{C}_{S_n}}\times \opc{\trees{C}_{S}}}{\opc{\trees{C}_{S\circ (S_1,\dots,S_n)}}}{(T_1,\dots,T_n)}{T\circ (T_1,\dots,T_n)}
\]
(we leave to the reader the definition of $\gamma$ on the morphisms).
The natural transformation $\ntra{\mu}{\hat{Z}(S_1)\otimes \dots \otimes\hat{Z}(S_n)\otimes \hat{Z}(S)}{\hat{Z}(S\circ(S_1,\dots,S_n))}$ is defined using  symmetric isomorphisms of $\Mm$.

The identity on the colour $c\in C$ is given by the morphism:
\[
\morp{\id_C=(G,\iota)}{\tilde{I}}{\hat{Z}(c;c)},
\]
where $\morp{G}{\ast}{\opc{\trees{C}_{(c;c)}}}$ has as image $|_c$, the empty tree labeled by $c$. The natural transformation $\iota$ is just an arrow $\arr{I=\tilde{I}(\ast)}{\hat{Z}(c;c)(|_c)=I}$, which is defined to be the identity of $I$.

For every signature $S\in\Sqofc{C}$ the right action of $\Sigma_{\card{S}}$ is given for every $\sigma\in \Sigma_{\card{S}}$ by the morphism:
\[
\morp{\sigma^*}{\hat{Z}(S)}{\hat{Z}(\sigma^{-1} (S))},
\]
described above.
We leave to the reader to check that this defines an operad structure, i.e. to check associativity, equivariance and unitality.
\end{proof}

\begin{cor}
The collection $Z=\{\varinjlim{\hat{Z}}(S)\}_{S\in \Sqofc{C}}$ has an operad structure.
\end{cor}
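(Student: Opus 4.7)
The corollary is an immediate consequence of the preceding proposition combined with the fact, recorded earlier in the section, that $\varinjlim \colon \LaxSCat{\Mm} \to \Mm$ is strong symmetric monoidal (which in turn uses that $\Mm$ is monoidal closed, so that $- \otimes X$ preserves all colimits for every $X$). The plan is simply to push the operad structure on $\hat{Z}$ forward along $\varinjlim$ applied componentwise, exploiting the general principle that strong symmetric monoidal functors transport $C$-coloured operad structures between symmetric monoidal categories.

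Concretely, for each $S \in \Sqofc{C}$ set $Z(S) := \varinjlim \hat{Z}(S)$. For each composable tuple $(S; S_1, \dots, S_n)$, I would define the composition morphism of $Z$ as the composite
\[
Z(S_1) \otimes \cdots \otimes Z(S_n) \otimes Z(S) \xrightarrow{\sim} \varinjlim\bigl(\hat{Z}(S_1) \otimes \cdots \otimes \hat{Z}(S_n) \otimes \hat{Z}(S)\bigr) \longrightarrow Z\bigl(S \circ (S_1, \dots, S_n)\bigr),
\]
where the first arrow is the coherence isomorphism expressing the strong monoidality of $\varinjlim$, and the second is $\varinjlim$ applied to the composition morphism of $\hat{Z}$. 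The unit for a colour $c \in C$ is obtained as $I \xrightarrow{\sim} \varinjlim \tilde{I} \longrightarrow Z(c;c)$, using that $\varinjlim \tilde{I} \cong I$ together with $\varinjlim$ applied to the unit of $\hat{Z}$. For each $S$, the action of $\sigma \in \Sigma_{\card{S}}$ on $Z(S)$ is defined by applying $\varinjlim$ to the corresponding symmetric morphism of $\hat{Z}$.

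The associativity, unitality, and equivariance axioms for $Z$ then follow formally from the corresponding axioms for $\hat{Z}$, using only the naturality of the monoidal coherence isomorphisms for $\varinjlim$ and their compatibility with the symmetry. I do not expect any genuine obstacle here: the substantive work was carried out in the preceding proposition (the construction of $\hat{Z}$ as an operad in $\LaxSCat{\Mm}$), and what remains is routine bookkeeping to unfold the statement that a strong symmetric monoidal functor preserves operads.
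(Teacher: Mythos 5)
Your proposal is correct and takes essentially the same route as the paper: the corollary is stated without a separate proof precisely because the digression on $\LaxSCat{\Mm}$ already records that $\varinjlim\colon \LaxSCat{\Mm}\to\Mm$ is strong monoidal (using that $\Mm$ is monoidal closed) and hence carries the $C$-coloured operad $\hat{Z}$ to a $C$-coloured operad $\varinjlim\hat{Z}$ in $\Mm$. Your explicit unfolding of the transported compositions, units and symmetric group actions is exactly the routine bookkeeping the paper leaves implicit.
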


\begin{rmk}\label{treecompz}
From Remark \ref{treecomp}, for every planar tree with an order on the leaves we have "composition" morphisms:
\[
\morp{(H_t,\nu_t)=:\Gamma^{\hat{Z}}_t}{\underset{v\in \verx{t}}\bigotimes \hat{Z}(\arity{v})}{\hat{Z}(\arity{t})}
\]
and 
\[
\morp{\varinjlim{\Gamma^{\hat{Z}}_t}=\Gamma^{Z}_t}{\underset{v\in \verx{t}}\bigotimes Z(\arity{v})}{Z(\arity{t})}.
\]

It is routine to check that the following diagram commutes:

\begin{equation}\label{diagcomplim}
\diagc{\underset{v\in \verx{t}}\bigotimes \hat{Z}(\arity{v})(t_v)}{\hat{Z}(\arity{t})(t)}{\underset{v\in \verx{t}}\bigotimes Z(\arity{v})}{Z(\arity{t}).}{}{\nu_t}{}{\Gamma^{Z}_t}{}
\end{equation}
\end{rmk}

There are morphisms of collections from $\morp{p'}{\tilde{F_x}}{\hat{Z}}$ and $\morp{q'}{\tilde{F_y}}{\hat{Z}}$ (recall that $\tilde{(-)}$ is the right adjoint of $\varinjlim$). On the $S$-operation $p$ is defined as $p'_s=(P_S,\mu_S)$ where $P(\ast)=S_x$ and $\mu_S$ is just the identity. In a similar way $q'_S=(Q_S,\nu_S)$ with $Q(\ast)=S_y$ and $\nu_S$ which is just the identity.

These maps are not maps of operads (at least not in a strict sense), but they become so in $\Mm$.

In fact, applying $\varinjlim$ to $p'$ and $q'$ we obtain maps $\morp{p}{F_x}{Z}$ and $\morp{q}{F_y}{Z}$. These maps make diagram \ref{pushdiag} commute and are in fact maps of operads.

\begin{prop}
The morphisms $\morp{p}{F_x}{\hat{Z}}$ and $\morp{q}{F_y}{\hat{Z}}$ are morphisms of operads.
\end{prop}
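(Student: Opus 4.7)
The plan is to verify that $p$ preserves compositions, identities, and the symmetric group actions; the argument for $q$ is obtained by replacing the marking $x$ by $y$ throughout. Recall that $p_S$ is the canonical colimit map $\hat{Z}(S)(S_x) \to \varinjlim \hat{Z}(S) = Z(S)$, where $S_x$ denotes the ordered $C$-corolla associated to $S$ with its unique vertex marked by $x$ and the standard ordering on its leaves; since $\hat{Z}(S)(S_x) = F_x(S)$, the map $p_S$ is nothing but the component at $S_x$ of the colimit cocone.

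For composition, fix composable signatures $S, S_1, \ldots, S_n$ and consider the ordered push-out $C$-tree $t := S_x \circ (S_{1,x}, \ldots, S_{n,x})$ in $\trees{C}_{S \circ (S_1, \ldots, S_n)}$. It has $n$ inner edges, each adjacent to two $x$-marked vertices, so successive inner face maps assemble into a planar marking-preserving morphism $\partial \colon (S \circ (S_1, \ldots, S_n))_x \to t$ in $\trees{C}_{S \circ (S_1, \ldots, S_n)}$. By the prescription of $\hat{Z}$ on inner face maps, the arrow $\hat{Z}(\partial)\colon \hat{Z}(t) \to \hat{Z}((S \circ (S_1, \ldots, S_n))_x)$ coincides, up to the symmetric coherence isomorphisms built into the composition of $\hat{Z}$, with the iterated operad composition $\bigotimes_i F_x(S_i) \otimes F_x(S) \to F_x(S \circ (S_1, \ldots, S_n))$ of $F_x$. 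On the other hand, the grafting functor underlying the composition of $\hat{Z}$ sends $(S_{1,x}, \ldots, S_{n,x}, S_x)$ precisely to $t$, so diagram~(\ref{diagcomplim}) evaluated at $t$ identifies the composition in $Z$ of $p_{S_1} \otimes \cdots \otimes p_{S_n} \otimes p_S$ with the canonical map at $t$ precomposed with $\nu_t$. Since the colimit cocone satisfies (canonical map at $t$) $=$ $p_{S \circ (S_1, \ldots, S_n)} \circ \hat{Z}(\partial)$, the two sides of the composition axiom coincide.

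For identities, fix $c \in C$. The unique vertex $v$ of $(c;c)_x$ has arity one, hence defines a planar marking-preserving degeneracy $\sigma_v \colon (c;c)_x \to |_c$ in $\trees{C}_{(c;c)}$. By the definition of $\hat{Z}$ on degeneracies, the induced arrow $\hat{Z}(\sigma_v) \colon I = \hat{Z}((c;c))(|_c) \to \hat{Z}((c;c))((c;c)_x) = F_x(c;c)$ is precisely the identity operation $u^{F_x}_c$ of $F_x$. Since $\id^Z_c$ is by construction the component of the colimit cocone at $|_c$, naturality of the cocone gives $\id^Z_c = p_{(c;c)} \circ u^{F_x}_c$.

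For symmetries, fix $S \in \Sqofc{C}$ and $\sigma \in \Sigma_{\card{S}}$. The ordered push-out $C$-trees $F_\sigma(S_x)$ and $(\sigma^{-1}S)_x$ have the same underlying marked $C$-tree but distinct leaf orderings, so there is a unique marking-preserving isomorphism $\theta$ between them in $\trees{C}_{\sigma^{-1}S}$ whose induced permutation at the single vertex is $\sigma$. By the definition of $\hat{Z}$ on marking-preserving isomorphisms, $\hat{Z}(\theta)$ is the symmetric morphism $\sigma^{\ast} \colon F_x(S) \to F_x(\sigma^{-1}S)$ of the operad $F_x$. Combining this with the description of the symmetric action on $\hat{Z}$ as the morphism $(F_\sigma, \mu_\sigma)$, and with the naturality of the colimit cocone, yields $\sigma^{\ast} \circ p_S = p_{\sigma^{-1}S} \circ \sigma^{\ast}$.

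The main obstacle is bookkeeping: $\hat{Z}$ is defined piecewise on the four classes of generating morphisms of $\trees{C}$ (tree-preserving marking changes, marking-preserving isomorphisms, inner face maps, and degeneracies), so each of the three operad axioms must be reduced to a single carefully chosen morphism in $\trees{C}$ whose image under $\hat{Z}$ recovers the corresponding structural map of $F_x$. Once these reductions are in place, the universal property of the colimit $Z = \varinjlim \hat{Z}$ delivers the required equations directly.
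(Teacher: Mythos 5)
Your proof is correct and takes essentially the same route as the paper's: both arguments reduce each operad axiom to evaluating $\hat{Z}$ on a well-chosen tree morphism (the grafting of $x$-marked corollas for composition, a corolla with twisting $\sigma$ for the symmetric action, the empty tree $|_c$ for units) and then conclude from diagram (\ref{diagcomplim}) together with naturality of the colimit cocone defining $Z$. The only blemish is a harmless direction slip in the symmetry case: since $\hat{Z}(\sigma^{-1}S)$ is contravariant, your $\theta$ induces a map $F_x(\sigma^{-1}S)\to F_x(S)$ rather than the direction you state, but as $\theta$ is an isomorphism this does not affect the argument.
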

\begin{proof}
Consider $s,s_1,\dots,s_n\in \Sqofc{C}$ such that $s\circ(s_1,\dots,s_n)$ makes sense and let $t=s\circ(s_1,\dots,s_n)$ (where the grafting is view as a grafting of trees) then the following diagram commutes:

\[
\xymatrix{(\underset{i\in [n]} \bigotimes F_x(s_i))\otimes F_x(s)\ar[r]^{p'} \ar[d]^{\Gamma_t^{F_x}} & 
(\underset{i\in [n]} \bigotimes \hat{Z}(s_i)(s_{i,x}))\otimes \hat{Z}(s)(s_{x}) \ar[r]^{} \ar[d]^{\nu_t}& (\underset{i\in [n]} \bigotimes Z(s_i))\otimes Z(s) \ar[d]^{\Gamma^{\hat{Z}}{t}}\\
F_x(s\circ(s_1,\dots,s_n))\ar[r]^-{p'} & \hat{Z}(\arity{t})(\arity{t}_x) \ar[r]^{} & Z(\arity{t})\ .
}
\]
In fact the square on the right is just (\ref{diagcomplim}) and the outer square shows that $p$ respects the composition.

For every signature $S\in \Sqofc{C}$ and every $\sigma\in \Sigma_{\card{S}}$ if in the diagram above one takes as tree the corolla associated to $S$ with a twisting $\sigma$ (see Remark \ref{rmksigma}) instead of $t$ one obtains the compatibility of $p$ with the action of $\sigma$.

In the same way taking $|_c$, the empty tree labeled with $c\in C$, instead of $t$ one obtains the preservation of the identity of $c$.

This proves that $p$ is a map of operads. The case of $q$ is analogous.

\end{proof}

\begin{prop}\label{exp.push}
Diagram (\ref{pushdiag}) with $Z$, $p$ and $q$ defined as above, is a push-out diagram in the category $\EOperfc{\Mm}{C}$.
\end{prop}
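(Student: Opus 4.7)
The plan is to verify the universal property of the pushout directly, working first in $\LaxSCat{\Mm}$ at the level of $\hat{Z}$ and then passing to the colimit. Commutativity of the square is immediate from the construction: for any $S\in \Sqofc{C}$ there are marking-preserving morphisms $S_o \to S_x$ and $S_o \to S_y$ in $\trees{C}_S$, and unwinding the definition of $\hat{Z}(S)$ on such morphisms shows that the two composites $F_o(S) \to F_x(S) \to Z(S)$ and $F_o(S) \to F_y(S) \to Z(S)$ both factor through the vertex $\hat{Z}(S)(S_o) = F_o(S)$ of the colimit cone, hence agree.

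For the universal property, suppose we are given an operad $W\in \EOperfc{\Mm}{C}$ together with maps $\morp{\alpha}{F_x}{W}$ and $\morp{\beta}{F_y}{W}$ satisfying $\alpha F(\iota_1) = \beta F(\iota_2) =: \gamma$. For every $(T,\lambda_T,\tau_T,\mf{T})\in \trees{C}_S$ define
\[
\phi_T \colon \bigotimes_{v\in \verx{T}} F_{\mf{T}(v)}(\arity{v}) \longrightarrow W(S)
\]
by applying $\alpha$, $\beta$, or $\gamma$ to each factor according to $\mf{T}(v)$ to land in $\bigotimes_{v} W(\arity{v})$, and then applying the composition along $T$ in $W$, namely $\Gamma^{W}_{T}$ from Remark \ref{treecomp}. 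The claim is that $\{\phi_T\}_T$ is a cocone on $\hat{Z}(S)$: by Proposition \ref{presfact2} every morphism in $\trees{C}_S$ factors into a planar tree-preserving piece, a composition of planar marking-preserving inner-face and degeneracy morphisms, and a marking-preserving isomorphism, and compatibility of $\phi_T$ in each of these four elementary cases is precisely, respectively, the identity $\alpha F(\iota_1)=\beta F(\iota_2)=\gamma$, the fact that $\alpha$ and $\beta$ respect operadic composition, preservation of unit operations, and equivariance of $\alpha$ and $\beta$. Taking colimits gives morphisms $\morp{\phi_S}{Z(S)}{W(S)}$ which, using (\ref{diagcomplim}) and the operad structure of $Z$ described in terms of grafting of trees, assemble into a morphism of $C$-coloured operads $\morp{\phi}{Z}{W}$ with $\phi p = \alpha$ and $\phi q = \beta$.

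Uniqueness is forced by the tree description: any operad morphism $\morp{\psi}{Z}{W}$ with $\psi p = \alpha$ and $\psi q = \beta$ agrees with $\phi$ on the corollas $S_x$ and $S_y$ of $\hat{Z}(S)$, and therefore agrees with $\phi$ on every $\hat{Z}(S)(T)$ because $\psi$ commutes with $\Gamma^{Z}_{T}$ (as it is an operad map) so is determined on the image of $\phi_T$ by its values on corollas. Since every element of $Z(S)$ arises from some tree in $\trees{C}_S$ via the colimit cone, this forces $\psi=\phi$. The main obstacle is the bookkeeping in the cocone check: one must unravel the definitions of the four elementary morphisms in $\trees{C}_S$ and match the symmetric/associative coherences in $\Mm$ that are packed into $\hat{Z}$ against those hidden in $\Gamma^{W}_{T}$, but no new idea is required beyond the universal property of each type of morphism and the structural axioms of $\alpha$ and $\beta$.
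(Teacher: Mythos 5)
Your proof is correct and takes essentially the same approach as the paper's: you define the candidate map on each tree-component by applying $\alpha$, $\beta$, or their common composite vertex-wise and then composing along the tree via $\Gamma^{W}_{T}$ (Remark \ref{treecomp}), verify the cocone condition through the factorization of Proposition \ref{presfact2}, and force uniqueness from the values on the corollas $S_x$, $S_y$ together with compatibility with tree-composition (diagram (\ref{diagcomplim})), which is exactly the structure of the paper's argument. The only differences are cosmetic: you spell out the cocone check case-by-case where the paper dismisses it as routine, and you state the compatibility condition as $\alpha F(\iota_1)=\beta F(\iota_2)$, correctly matching the orientation of $\Gamma$ where the paper's proof transposes the indices.
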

\begin{proof}
Suppose another $C$-coloured operad $L$ is given, with morphism $\morp{h}{F_x}{L}$ and $\morp{k}{F_y}{L}$ such that $hF(\iota_2)=kF(\iota_1)$.
We have to define a morphism of $C$-coloured operads  $\morp{l}{Z}{L}$ such that $lp=h$ and $lq=k$.

For every $S\in \Sqofc{C}$, to define a morphism $\morp{l_S}{Z(S)}{L(S)}$ is equivalent to give a morphism from $\morp{l_{S,t}}{\hat{Z}(S)(t)}{L(S)}$ for every $t\in \trees{C}_S$
in a compatible way (for every morphism $\morp{i}{t}{t'}$ the relation $l_{S,t}=l_{S,t'} Z(S)(i)$ has to hold).

Recall that $\hat{Z}(S)(t)=\underset{v\in \verx{t}}\bigotimes \mf{t}(v)(\arity{v})$; the morphism $l_{S,t}$ is then defined as the composition of
\[
\morp{\underset{v\in \verx{t}}\bigotimes w_v}{Z(S)(t)}{\underset{v\in \verx{t}}\bigotimes L(\arity{v})}
\]
 and 
 \[
 \morp{\Gamma_t}{\underset{v\in \verx{t}}\bigotimes L(\arity{v})}{L(S)}, 
\]
where:
\[
w_v=
\begin{cases}
 h_{\arity{v}} & \text{if } \mf{t}(v)=x\\
 k_{\arity{v}} & \text{if } \mf{t}(v)=y\\
 yk_{\arity{v}} &\text{if } \mf{t}(v)=o
\end{cases}
\]
and $\Gamma_t$ is the composition of $L$ obtained following the shape of $t$ (seen as a planar tree without marking on the vertices) as defined in Remark \ref{treecomp}.

It can be checked that this is a good definition and the obtained map is a morphism of operads.

It remains to check that $l$ is unique. Suppose that there is another $l'$ such that $l'p=h$ and $l'q=k$;
this implies that for every  $S\in \Sqofc{C}$ one has to define $l'_{S,S_x}=h_S=l_{S,S_x}$ and $l'_{S,S_y}=k_S=l_{S,S_y}$.

For every $S\in \Sqofc{C}$ and $t\in Z(S)$ the following diagram has to commute:
\[
 \xymatrix{
           \underset{v\in \verx{t}}\bigotimes \hat{Z}(\arity{v})(t_v) \ar[dd]_{\underset{v}\otimes l'_{\arity{v},t_v}}\ar[r]^-{\simeq}_-{\nu_t} & \hat{Z}(S)(t) \ar[dd]^{l'_{S,t}} \\
            & \\
           \underset{v\in \verx{t}}\bigotimes L(\arity{v}) \ar[r]^{\Gamma_t} & L(S),
          }
\]
where $\nu_t$ is defined as in Remark \ref{treecompz}.

As we see, the vertical arrow on the left has to be equal to $\underset{v}\otimes l_{\arity{v},\arity{v}_{M_t(v)}}$ so $l'_{S,t}$ is forced to 
be defined as $l_{S,t}$.
 
\end{proof}

\subsection{Push-outs along fully-faithful inclusions}

Let now set ourself in the category of coloured operads, with no set of colours fixed. We want to prove the following fact:
\begin{prop}\label{prop.ap.ff}
Suppose we have a push-out diagram in $\EOper{\Mm}$ ($\ENSOper{\Mm}$ or $\EROper{\Mm}$):
\[
 \diagc{A}{P}{B}{Q.}{i}{f}{h}{g}{}
\]
If $i$ and $f$ are injective on colours and $i$ is fully-faithful then $h$ is injective on colours and is fully-faithful.
\end{prop}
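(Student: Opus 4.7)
My strategy is to reduce the claim to the fibre over $\Cl{Q}$ and then exploit the explicit tree description of Proposition~\ref{exp.push}. Since $\Clf$ is a bifibration and set-theoretic pushouts of injections are injections, $\Cl{Q}$ equals the pushout $\Cl{P} \sqcup_{\Cl{A}} \Cl{B}$ and both $h$ and $g$ are injective on colours. Setting $C = \Cl{Q}$, the colimits-in-bifibrations description of Appendix~\ref{sez.colim} identifies $Q$ with the pushout computed in $\EOperfc{\Mm}{C}$ of $h_!P \leftarrow (hi)_!A \to g_!B$, and Proposition~\ref{exp.push} then presents $Q(\sigma) \cong \varinjlim_{T \in \trees{C}_{\sigma}} \hat Z(\sigma)(T)$ for every $\sigma \in \Sqofc{C}$, with $F_x = h_!P$, $F_y = g_!B$, $F_o = (hi)_!A$. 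To show $h$ is fully faithful, I would fix $s \in \Sqofc{\Cl{P}}$ and prove that the canonical inclusion $\trf h_s : P(s) = \hat Z(h(s))((h(s))_x) \to Q(h(s))$ is an isomorphism.

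The first observation is that direct images along injections of colour sets preserve the source operads' values on signatures whose colours lie in the source set and send all other non-identity values to the initial object $\emptyset \in \Mm$; since $\emptyset$ is absorbing for $\otimes$, only ``matched'' trees $T \in \trees{C}_{h(s)}$ contribute, meaning those in which every $x$-vertex has arity in $\Sqofc{\Cl{P}}$, every $y$-vertex in $\Sqofc{\Cl{B}}$, and every $o$-vertex in $\Sqofc{\Cl{A}}$. The identity $\Cl{P} \cap \Cl{B} = \Cl{A}$ then forces any inner edge coloured in $\Cl{B} \setminus \Cl{A}$ to connect two $y$-vertices, so the $y$-vertices of a matched tree partition into maximal connected $y$-clusters whose boundary edges (meeting non-$y$ vertices or the leaves/root of $T$) lie in $\Cl{A}$.

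I will construct the inverse $\phi : Q(h(s)) \to P(s)$ by specifying compatible maps $\phi_T : \hat Z(h(s))(T) \to P(s)$ on each matched $T$, according to the following recipe: (i) compose the $B$-valued vertex factors inside each $y$-cluster along the cluster's tree shape using $B$'s operad structure to land in $B(i\sigma)$, where $\sigma \in \Sqofc{\Cl{A}}$ is the cluster's boundary signature; (ii) apply the inverse isomorphism $B(i\sigma) \xrightarrow{\sim} A(\sigma)$ furnished by the fully-faithful hypothesis on $i$; (iii) convert every resulting $A$-factor (from $o$-vertices and collapsed clusters) to a $P$-factor via $\trf f$; (iv) compose along the $y$-cluster-collapsed tree, whose edges all lie in $\Cl{P}$, using $P$'s operad structure. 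On the corolla $(h(s))_x$ this procedure reduces to the identity, so $\phi \circ \trf h_s = \id_{P(s)}$ immediately.

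Well-definedness of $\phi$ on the colimit requires checking, via the factorisation in Proposition~\ref{presfact2}, compatibility of the $\phi_T$'s with each class of generating morphism of $\trees{C}_{h(s)}$: marking-preserving isomorphisms, planar tree-preserving marking changes, planar marking-preserving inner face maps, and degeneracies. These identities follow from equivariance, unitality, and associativity of the operad structures together with the fact that $f$ and $i$ are operad maps. For the other equation $\trf h_s \circ \phi = \id_{Q(h(s))}$, I must verify summand-by-summand that $\iota_{(h(s))_x} \circ \phi_T = \iota_T$ in the colimit, which I would realise via a zigzag in $\trees{C}_{h(s)}$ connecting $T$ and $(h(s))_x$: contract each $y$-cluster using marking-preserving inner face maps, swap the single resulting $y$-marked cluster-vertex for an $o$-marked one using the invertibility of $B(i\sigma) \cong A(\sigma)$, swap each $o$ for $x$ via the $\Gamma$-morphism $o \to x$ realising $f$, and finally contract the resulting tree of $x$-vertices by inner face maps. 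The main obstacle will be this compatibility/zigzag verification, demanding careful bookkeeping across the morphism types of the tree category. The argument carries over to $\ENSOper{\Mm}$ mutatis mutandis using planar push-out trees (as indicated at the start of Appendix~\ref{App.B}), and the reduced case $\EROper{\Mm}$ follows immediately since pushouts of reduced operads remain reduced and the symmetric argument applies.
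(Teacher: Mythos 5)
Your overall strategy is in substance the paper's own: reduce to the fibre over $\Cl{Q}$ via the bifibration description of colimits, invoke Proposition \ref{exp.push} to present $Q(h(s))$ as a colimit over $\trees{C}_{h(s)}$, and use the invertibility of the comparison $A(\sigma)\cong B(i\sigma)$ (fully-faithfulness of $i$) to collapse every contributing tree onto the corolla $(h(s))_x$. The difference is packaging. The paper localizes the index category, formally inverting the two kinds of marking-change morphisms that $\hat Z$ sends to isomorphisms, and then shows the $P$-marked corolla $S_P$ is \emph{initial} in the localized category, so the colimit is computed at $S_P$; your zigzag (contract $y$-clusters, re-mark $y\to o$ by invertibility, $o\to x$ via $f$, contract $x$-vertices) is exactly the composite $S_P\to\arity{t}\to t'''\to t''\to t'\to t$ the paper builds there. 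Your route instead constructs an explicit inverse cocone $\{\phi_T\}$ and must verify its compatibility with every generating morphism of $\trees{C}_{h(s)}$ by hand. That verification is not a formality: it is precisely the content that the paper's initiality argument absorbs, since there uniqueness of maps out of $S_P$ (maps in the tree category being determined by the underlying maps of $C$-trees and the leaf order) replaces all the naturality bookkeeping you defer. As written, your proof is therefore a correct skeleton whose hardest step is only announced, not carried out.

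There is also one concrete error in your analysis of which trees contribute. After taking direct images along the injections, the operads $h_!P$, $g_!B$, $(hi)_!A$ are \emph{not} $\emptyset$ on all signatures outside their colour sets: on unary signatures $(c;c)$ with $c$ outside the relevant set they take the value $I$ (the unit), not $\emptyset$. Consequently your ``matched'' trees are too restrictive: a tree may carry, say, an $x$- or $o$-marked unary vertex of colour $c\in\Cl{B}\setminus\Cl{A}$ contributing a unit factor, and such trees have non-initial value under $\hat Z$. This breaks your claim that every inner edge coloured in $\Cl{B}\setminus\Cl{A}$ joins two $y$-vertices, and hence the claim that $y$-cluster boundaries lie in $\Sqofc{\Cl{A}}$: a boundary edge of a $y$-cluster can abut a unit-marked unary vertex of off-colour. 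The paper handles exactly this by including the clause ``$\arity{v}=(c;c)$ for some $c\in C$'' in the definition of $\trees{C}'_S$ and by inverting the second kind of morphism (re-marking $P\to A$ on unary vertices coloured in $C\setminus\Cl{P}$). Your construction of $\phi_T$ is repairable — absorb these unit vertices via the degeneracy compatibilities you list — but until they are incorporated into the cluster decomposition, step (i)--(ii) of your recipe does not apply to all contributing trees, so the cocone is not yet defined on the whole index category.
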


\begin{proof}
As we remarked at the beginning of the Appendix we will give a proof only for $\EOper{\Mm}$.

Recall that since the colour functor from $\EOper{\Mm}$ to $\Set$ preserve colimits, at the level of colours the diagram
\[
 \diagc{\Cl{A}}{\Cl{P}}{\Cl{B}}{\Cl{Q}}{i}{f}{h}{g}{}
\]
is a push-out diagram. The first part of the statement then follows from the fact that in $\Set$ the push-out of an injective function is injective.
Under our hypotheses all the maps in the diagram are injective so we can suppose that $\Cl{B}$,$\Cl{P}$,$\Cl{A}$ are subset of  $C=\Cl{Q}$ such that
$|\Cl{B}\cup \Cl{P}|=\Cl{Q}$ and $\Cl{B}\cap \Cl{P}=\Cl{A}$.

The only thing we have to prove is that $h$ is fully-faithful, i.e. $\morp{\trf{h}}{P}{h^*(Q)}$ (notation of Section 3) is an isomorphism in $\EOperfc{\Mm}{\Cl{P}}$.

Recall from the previous discussion about colimits in the category of $\EOper{\Mm}$ that our diagram is a push-out if and only if the following one 
\begin{equation}\label{fullypush2}
 \diagc{g_! i_! A}{h_! P}{g_! B}{Q}{g_!(\trc{i})}{h_!(\trc{f})}{\trc{h}}{\trc{g}}{}
\end{equation}
is a push-out diagram in $\EOperfc{\Mm}{\Cl{Q}}$.

In $\EOperfc{\Mm}{\Cl{P}}$ we have the equality $\trf{h}=\eta_P h^*(\trc{h})$, where $\eta$ is the unit of the adjunction $(h_!,h^*)$ . Since $h$ is injective on colours the unit is a natural isomorphism, 
so we are reduced to prove that $h^*(\trc{h})$ is an isomorphism.

Since all the underlying maps of colours we are considering are injective we have the following descriptions of the operads in diagram \ref{fullypush2}:

For every $S=(s_1,\dots,s_n;s)\in \Sqofc{C}$:
\[
 (g_! i_! A)(S)=
\begin{cases}
 A(S) & \text{if } s,s_i\in \Cl{A}\ \forall i\in [n]\\
 I & \text{if }  n=1 \text{ and } s_1=s\in C/\Cl{A}\\
 \varnothing & \text{otherwise .}
\end{cases}
\]
\[
 (g_! B)(S)=
\begin{cases}
 B(S) & \text{if } s,s_i\in \Cl{B}\ \forall i\in [n]\\
 I & \text{if }  n=1 \text{ and } s_1=s\in C-\Cl{B}\\
 \varnothing & \text{otherwise}
\end{cases}
\]
\[
 (h_! P)(S)=
\begin{cases}
 P(S) & \text{if } s,s_i\in \Cl{P}\ \forall i\in [n]\\
 I & \text{if }  n=1 \text{ and } s_1=s\in C-\Cl{P}\\
 \varnothing & \text{otherwise}
\end{cases}
\]

To prove that $h^*(\trc{h})$ is an isomorphism is exactly to prove that for every $S=(s_1,\dots,s_n;s)\in \Sqofc{\Cl{P}}$ the morphism
$\morp{h_S}{h_!P(S)=P(S)}{Q(S)}$ is an isomorphism.

Fix $S=(s_1,\dots,s_n;s)\in \Sqofc{\Cl{P}}$. From the description of push-outs given in section \ref{secpushfc} we get that $Q(S)=\varinjlim \hat{Z}(S)$ for a functor
$\morp{\hat{Z}(S)}{\trees{C}_S^{op}}{\Mm}$, where $\trees{C}_S$ is the category of ordered push-out trees with arity $S$ (in which this time we can suppose that the vertices
are marked in $\{A,B,P\}$).

The map $\morp{h_S}{h_!P(S)=P(S)}{Q(S)}$ is exactly the canonical map from $\arr{\hat{Z}(S)(S_P)}{Q(S)}$.

We are now going to do a series of changes of the index category of the colimit $\varinjlim \hat{Z}(S)$ in order to show that it is canonically isomorphic to $P(S)$.

First, consider $\trees{C}_S'$ the full subcategory of $\trees{C}_S$ spanned by all the trees $t$ such that for every $v\in \verx{t}$:
\begin{itemize}
 \item if $\mf{t}(v)=A$ then $\arity{v}\in \Sqofc{\Cl{A}}$ or $\arity{v}=(c;c)$ for some $c\in C$;
 \item if $\mf{t}(v)=B$ then $\arity{v}\in \Sqofc{\Cl{B}}$ or $\arity{v}=(c;c)$ for some $c\in C$;
 \item if $\mf{t}(v)=P$ then $\arity{v}\in \Sqofc{\Cl{P}}$ or $\arity{v}=(c;c)$ for some $c\in C$;
\end{itemize}
Note that if $t$ does not belong to $\trees{C}'_S$ then $\hat(Z)(S)(t)=\varnothing$ and if $t\in \trees{C}'_S$ and there exists a morphism $\arr{t}{t'}$
then also $t'\in \trees{C}'_S$ so if $\hat{Z}'(S)$ is the restriction of $\hat{Z}(S)$ to $\opc{\trees{C}'}_S$ we have $\varinjlim \hat{Z}(S)=\varinjlim \hat{Z}'(S)$.

Now we want to add an inverse to every morphism $l$ in of $\trees{C}'_S$ of the following kinds:
\begin{enumerate}[(i)]
\item\label{mkind1} $\morp{l}{(T,\lambda_T,\mf{T})}{(T,\lambda_T,\mf{T}')}$ such that $T$ has a vertex $v$ such that $\arity{v}\in \Sqofc{A}$, $l$ is tree-preserving and $\mf{T}(v)=B$,  $\mf{T}'(v)=A$ and $\mf{T}(u)=\mf{T}'(u)$ for every $u\neq v$.
 \item\label{mkind2} $\morp{l}{(T,\lambda_T,\mf{T})}{(T,\lambda_T,\mf{T}')}$, where $T$  has a vertex $v$ such that $\arity{v}=(c;c)$ for $c\in C- \Cl{P}$, $l$ is the identity, $\mf{T}(v)=P$,  $\mf{T}'(v)=A$ and $\mf{T}(u)=\mf{T}'(u)$ for every $u\neq v$.
\end{enumerate}

The category $\trees{C}_S''$ is the one obtained by $\trees{C}_S'$ inverting all morphisms of kind (\ref{mkind1}) and (\ref{mkind2}). It exists, since
for every $T,T'\in \trees{C}_S''$ the set of morphisms $\trees{C}_S''(T,T')$ is a subset of the set of morphisms between the underlying trees.

Let $\morp{i}{\trees{C}_S'}{\trees{C}_S''}$ be the obvious inclusion.
All the morphisms of kind (\ref{mkind1}) and (\ref{mkind2}) are sent to isomorphisms by $\hat{Z}'$. For morphisms of kind (\ref{mkind1}) this follows from the fact that
if $S\in \Sqofc{\Cl{A}}$ then $\morp{g_!(i)_S}{g_! i_!A(S)}{g_! B(S)}$ is an isomorphism. For morphisms of kind (\ref{mkind2}) similarly this is a consequence of the fact that
if $c\in \Cl{Q}- \Cl{P}$ then $\morp{g_!(i)_{(c;c)}}{g_! i_!A(c;c)}{h_! P(c;c)}$ is an isomorphism. 

The functor $\hat{Z}'$ factors then through $i$  and a functor $\morp{\hat{Z}''}{\opc{\trees{C}_S''}}{\Mm}$, moreover $\varinjlim \hat{Z}'=\varinjlim \hat{Z}''$.

The object $S_P$ (the corolla associated to $S$ marked with $P$) is initial in $P\trees{C}_S''$.  To prove this fact, notice first that for every $t\in \trees{C}_S''$ there is at most one map from
$S_P$ to $t$; in fact the maps in $P\treesc{\Cl{Q}}''$ are uniquely determined by the underlying maps of $C$-trees, and for every $C$-tree $t$
with $\arity{t}=S$ and an order on the leaves $\tau$ there is a unique map from $\arity{S}$ to $t$ respecting $\tau$.

To prove that $S_P$ is initial in $P\trees{C}_S''$ is then sufficient to exhibit a map from $S_P$ to $t$ for every $t=(t,\mf{t},\lambda_t,\tau_t) \in P\trees{C}_S''$; this map is given
by the following composition: 

 \[
 S_P\overset{\theta}\longrightarrow \arity{t} \overset{q}\longrightarrow t'''\overset{r}\longrightarrow t''\overset{r}\longrightarrow t' \overset{s}\longrightarrow t,
\]

where  
\begin{itemize}
\item $t'=(t,\mf{t'},\lambda_t,\tau_t)$ and $s$ is a tree-preserving morphism such that for every vertex $v\in \verx{t}$:
\begin{itemize}
 \item if $\arity{v}\in \Sqofc{\Cl{A}}$ and $\mf{t}(v)=A$ then $\mf{t'}(v)=B$;
 \item if $\arity{v}\notin \Sqofc{\Cl{B}}$ then $\mf{t'}(v)=P$;
 \item otherwise $\mf{t'}(v)=\mf{t}(v)$.  
\end{itemize}  
The map $s$ is obtained using inverses of the maps of kind (\ref{mkind2}). 
Notice that $t'$ has no vertex marked by $A$.
\item $r$ is an iteration of marking-preserving inner faces of inner edges with both vertices marked by $B$. This process is carried out until $t''$ has no inner edges of this kind. As a consequence all the vertices $v$ in $t''$ that are marked by $B$ have arity in $\Sqofc{\Cl{A}}$;
\item $r$ is a tree preserving morphism such that on every vertex $v\in \verx{t'''}$ the marking in $P$. This map is obtained by compositions of inverses of morphism of kind \ref{mkind1};
\item $q$ is obtained by composition of marking-preserving inner faces;
\item $\theta$ is the unique (marking-preserving) isomorphism from $S_P$ to $\arity{t}$. 
\end{itemize}

Since $S_P$ is initial (final in the opposite category) the map 
\[
\morp{h_S}{h_!P(S)=Z(S)(S_P)}{Q(S)=\varinjlim Z(S)}
\]
 is an isomorphism, and this concludes the proof.
\end{proof}

We would like to remark that the statement of Proposition \ref{prop.ap.ff} remains true even if we do not suppose that $f$ is injective but in that case
the proof is a little bit more involved.

\section{Push-out alongs free maps and Topological Operads}\label{App.C}

The aim of this section is to prove that for every $C\in \Set$ the operad $\Opop{C}$ is $\mathcal{T}_1$-admissible in $\Top$, where $\Top$ the category of compactly generated (weak Hausdorff) spaces and $\mathcal{T}_1$ is the class of $T_1$ closed inclusions.

Recall that a \emph{$T_1$ closed inclusion} is a map $\morp{f}{X}{Y}$ in $\Top$ such that it is a closed inclusion and every point in $Y\backslash f(X)$ is closed.
The class of $T_1$ closed inclusions is monoidally saturated in $\Top$ and will be denoted by $\mathcal{T}_1$.

The monoidal model category $\Top$ admits a monoidal fibrant replacement functor and a contains a cocommutive comonoidal interval thus, thanks to Proposition \ref{path.admis} and \ref{BMstone}, it is sufficient to check condition \ref{item.adm1} of Definition \ref{Kadm} is satisfied by $\Opop{C}$. 
In other words we want to check that given a generating (trivial) cofibration $\morp{i}{K_0}{K_1}$ in $\Top^{\Sqofc{C}}$ and a map of $C$-coloured operads
$\morp{\alpha}{F_{\Opop{C}}(K_0)}{X}$ the map $i_{\alpha}$ in the push-out diagram
\begin{equation}\label{free.diag}
 \diagc{F_{\Opop{C}}(K_0)}{X}{F_{\Opop{C}}(K_1)}{Y}{F_{\Opop{C}}}{\alpha}{i_\alpha}{}{}
\end{equation}
is a local $\mathcal{T}_1$-morphism.

\newcommand{\TreeF}[1]{F\trees{#1}}
\newcommand{\TreeFm}[1]{F\underline{\mathbf{T}}(#1)}

Consider the category of ordered push-out $C$-trees $\trees{C}$ introduced in Section \ref{ss.pusht}, for simplicity this time the vertices can be marked by $K_0$ (instead of $o$), $K_1$ 
(instead of $y$) or $X$ (in place of $x$). 

Let us define $\TreeF{C}$, a subcategory of $\trees{C}$: it contains all objects of $\trees{C}$ and all morphisms $\morp{f}{T}{S}$ in $\trees{C}$ such
that:
\begin{itemize}
 \item $f$ decomposes as composition of inner-face maps and isomorphisms (no degeneracies are involved), i.e. if $v,v'$ are distinct vertices of $T$ then $f(v)\cap f(v')=\emptyset$;
 \item $f$ does not factor through inner-face maps associated to vertices marked by $K_0$ or $K_1$, i.e. for every vertex $v\in \verx{T}$ such that $M_T(v)\in \{K_0,K_1\}$ 
the set $f(v)$ is a singleton;
 \item the change of planar structure $\sigma_f\in G_T$ (\emph{cf.} Remark \ref{rmksigma}) is the identity on the vertex marked by $K_0$ or $K_1$.
\end{itemize}    

We will refer to $\TreeF{C}$ as the \emph{category of free-push-out $C$-trees}. 

For every $S\in \Sqofc{C}$ let us denote by $\TreeF{C}_S$ the full subcategory of $\TreeF{C}$ spanned by the objects $(T,\lambda_T,M_T,\tau_T)$ such that $\arity{T}=S$.
Note that $\TreeF{C}\simeq \underset{S\in \Sqofc{C}}\coprod \TreeF{C}_S$.
 
For every $S\in \Sqofc{C}$ we can define a functor
\[
 \morp{Z}{\opc{\TreeF{C}_S}}{\Mm}
\]
in the same way in which we define functor (\ref{tree.functor}):
\[
Z(T,M_T)\simeq \underset{v\in \verx{T}}\bigotimes M_T(v)(\arity{v}) 
\]

Using techniques similar to the ones used in Appendix \ref{App.B} one can check that
in diagram (\ref{free.diag})
\[
  Y(S)\simeq \varinjlim Z .
\]

Furthermore if we denote by $Z_0$ the restriction of $Z$ to the full subcategory of $\TreeF{C}_S$ spanned by the trees with vertices marked only by $X$ then
$X(S)\simeq \varinjlim Z_0$ and, under these identifications, $i_\alpha$ is the canonical map $\arr{\varinjlim Z_0}{\varinjlim Z}$.

The category $\TreeF{C}_S$ is quite big so it might be hard to prove that $i_\alpha$ is a $T_1$ closed inclusion. Since we know that $T_1$ closed inclusions are
closed under push-out and transfinite compositions, we will try to decompose this colimit as a transfinite composition of iterated push-outs along colimits of maps
which are simpler to understand.

Let us call a tree of $\TreeF{C}_S$ \emph{minimal} if none of its edges belongs to two vertices marked by $X$. The full subcategory of $\TreeF{C}_{S}$ spanned by minimal
trees is cofinal in $\TreeF{C}_{S}$ an will be denoted by $\TreeFm{C}_S$.  

For every $n\in \N$ we can define the following full subcategories of $\TreeFm{C}_S$:
\begin{itemize} 
\item $\TreeFm{C}_{S,\leq n}$ spanned by all the trees with at most $n$ vertices not marked by $X$;
\item $\TreeFm{C}_{S,n}$ spanned by all the trees with exactly $n$ vertices not marked by $X$;
\item $\TreeFm{C}_{S,n}^{0}$ spanned by all the trees with exactly $n$ vertices not marked by $X$ and at least one marked by $K_0$;
\item $\TreeFm{C}_{S,n}^{1}$ spanned by all the trees with exactly $n$ vertices not marked by $X$ and no vertices marked by $K_0$;
\item $\TreeFm{C}_{S,\leq n}^{+}=\TreeFm{C}_{S,\leq n}\cup\TreeFm{C}_{S,n+1}^{0}$
\end{itemize}
Let $Z_{\leq n}$, $Z_{n}$, $Z_{n}^{0}$, $Z_{n}^{1}$, $Z_{\leq n}^{+}$ be the restrictions of $Z$ to $\opc{\TreeFm{C}_{S,\leq n}}$, $\opc{\TreeFm{C}_{S,n}}$, $\TreeFm{C}_{S,n}^{0\,\mathrm{op}}$,
$\TreeFm{C}_{S,n}^{1\,\mathrm{op}}$ and $\TreeFm{C}_{S,\leq n}^{+\,\mathrm{op}}$ respectively. 

It it is clear that we have fully faithful inclusions $\morp{j_n}{\TreeFm{C}_{S,n}}{\TreeFm{C}_{S,n+1}}$ and that $\TreeFm{C}_S=\underset{n\in \N}\bigcup \TreeFm{C}_{S,n}$.
This produces a transfinite composition:
\begin{equation}\label{trans.comp.1}
 X(S)\simeq \varinjlim Z_{\leq 0} \overset{j_0} \longrightarrow \varinjlim Z_{\leq 1} \overset{j_1} \longrightarrow \dots \overset{j_{n-1}} \longrightarrow \varinjlim Z_{\leq n}
\overset{j_{n+1}} \longrightarrow \dots
\end{equation}
whose colimit is $\morp{i_\alpha(S)}{X(S)}{\varinjlim Z \simeq Y(S)}$. 

We would like to express this transfinite composition as an iteration of push-outs. 

We notice at first that $\TreeFm{C}_{S,\leq n}= \TreeFm{C}_{S,\leq n-1} \cup \TreeFm{C}_{S,n}=\TreeFm{C}_{S,n}^{1} \cup \TreeFm{C}_{S,\leq n-1}^{+}$.\\
Since there are no morphisms between objects in $\TreeFm{C}_{S,\leq n-1}$ and $\TreeFm{C}_{S,n}^{1}$ the following diagram is a push-out square:
\begin{equation}\label{it.po.1}
 \diagc{\varinjlim Z_{n}^{0}}{\varinjlim Z_{n}}{\varinjlim Z_{\leq,n-1}^{+}}{\varinjlim Z_{\leq,n}\ .}{}{u_n}{}{}{}
\end{equation}

Furthermore we have the following:

\begin{lemma}
For every $n\in \N$ the subcategory $\TreeFm{C}_{S,\leq n}$ is cofinal in $\TreeFm{C}_{S,\leq n}^{+}$ (or equivalently $\TreeFm{C}_{S,\leq n}^{\mathrm{op}}$ is final in $\TreeFm{C}_{S,\leq n}^{+\,\mathrm{op}}$).
\end{lemma}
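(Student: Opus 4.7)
The plan is to verify the stated finality condition directly: for every $T\in \TreeFm{C}_{S,\leq n}^{+}$ the category of pairs $(c,\,c\to T)$ with $c\in\TreeFm{C}_{S,\leq n}$ and $c\to T$ a morphism in $\TreeFm{C}$ is non-empty and connected. If $T$ already belongs to $\TreeFm{C}_{S,\leq n}$ the identity makes the slice trivial, so the substantive case is $T\in \TreeFm{C}_{S,n+1}^{0}$: $T$ is minimal, has $n+1$ non-$X$-marked vertices and at least one vertex marked $K_0$.

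For non-emptiness I would pick a $K_0$-marked vertex $v$ of $T$ and list its $X$-marked neighbors $u_1,\dots,u_k$ (possibly $k=0$), and then form $c$ by merging $v,u_1,\dots,u_k$ into a single vertex $w$, which I mark $X$. Minimality of $T$ implies that all remaining neighbors of the $u_i$ are non-$X$, and by the choice of the $u_i$ the other neighbors of $v$ are non-$X$ too, so $w$ has no $X$-marked neighbor in $c$; hence $c$ is minimal, and the non-$X$ count has dropped by exactly one from $n+1$ to $n$, placing $c$ in $\TreeFm{C}_{S,\leq n}$. The morphism $c\to T$ is a tree-preserving change of the marking at $v$ (from $X$ in $c$ to $K_0$ in $T$) when $k=0$, and a composition of inner-face maps successively splitting $w$ back into $v$ and the $u_i$'s when $k>0$; in both cases the combined source vertex always carries the marking $X$ and the target markings $K_0,X$ lie below $X$ in the partial order on $\Gamma$, so the $\TreeF{C}$-conditions on $f(v)$ being a singleton for $K_0,K_1$-marked source vertices and on the planar change $\sigma_f$ being trivial there are both met.

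For connectedness I would argue by common refinements. Given two reductions $c_1,c_2$ arising from distinct $K_0$-vertices $v^{(1)},v^{(2)}$ of $T$, form $c_{12}$ by simultaneously merging both clusters $\{v^{(i)}\}\cup\{\text{$X$-neighbors of }v^{(i)}\}$, enlarging the merged cluster along further adjacencies in $T$ as needed so that no two $X$-marked vertices of the result end up adjacent. The same local minimality analysis shows that $c_{12}$ is a minimal tree with at most $n$ non-$X$-marked vertices, and the inner-face splittings $c_{12}\to c_i$ in $\TreeF{C}$ fit into commutative triangles over $T$ (both decompositions of $c_{12}\to T$ induce the same underlying tree morphism and the same markings), yielding the zigzag $c_1\leftarrow c_{12}\to c_2$ in the slice. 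A finite iteration of this construction relates arbitrary pairs of slice objects.

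The principal technical obstacle lies in the minimality bookkeeping for the common refinement: when $K_0$-vertices of $T$ are adjacent, or share an $X$-neighbor with yet another $K_0$-vertex, the naive union of the clusters can leave two adjacent combined $X$-marked vertices in the candidate for $c_{12}$. I would resolve this by iterating the merging along connected components of non-$X$-marked vertices (together with their $X$-neighbors) in $T$ until no such adjacency remains; the process terminates because each enlargement strictly decreases the number of such components, and by the local analysis it preserves minimality and never pushes the non-$X$ count above $n$.
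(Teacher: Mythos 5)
The paper states this lemma without proof, so your proposal can only be measured against the definitions. The first half is correct: you test finality in the right direction (morphisms in $\TreeF{C}$ go from contracted trees to expanded ones, so the comma categories to check consist of morphisms $c\to T$ with $c\in\TreeFm{C}_{S,\leq n}$, and what you call a ``common refinement'' is really a common \emph{coarsening} --- the zigzag $c_1\leftarrow c_{12}\to c_2$ you write down is nevertheless the correct shape). Your non-emptiness construction --- absorb a $K_0$-marked vertex together with all of its $X$-marked neighbours into a single $X$-marked vertex --- does produce a minimal tree with exactly $n$ vertices not marked $X$, and the resulting contraction-plus-re-marking satisfies the three conditions defining $\TreeF{C}$ (an $X$-marked source vertex may cover vertices marked $X$ or $K_0$, since $\Gamma$ has morphisms $K_0\to X$; the untouched $K_0$- and $K_1$-vertices map to singletons with trivial planar change). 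This is visibly the move the lemma is built around, and it is exactly where the hypothesis that $T\in\TreeFm{C}_{S,n+1}^{0}$ has at least one $K_0$-marked vertex enters.

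The connectedness half, however, has a genuine gap. You only connect pairs of the \emph{special} reductions produced by your own construction, and the closing claim that ``a finite iteration of this construction relates arbitrary pairs of slice objects'' is not an argument: iterated common coarsenings of special reductions never produce, let alone connect to, a general object of the comma category. A general object $g\colon d\to T$ is an arbitrary admissible marked coarsening of $T$: $d$ may collapse several $K_0$-vertices together with their $X$-neighbourhoods into one large $X$-marked vertex, may collapse a subtree that is not the star of a single $K_0$-vertex, or may combine a collapse at one $K_0$-vertex with a re-marking of another $K_0$-vertex of $T$ to $K_1$ (the count of non-$X$ vertices is then still $n$, so this is a legitimate slice object). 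The last kind shows why the missing step is delicate: $\Gamma$ has no morphisms between $X$ and $K_1$, so a $K_1$-marked vertex of $d$ sitting over a $K_0$-vertex of $T$ can never be absorbed into an $X$-marked vertex of a common coarsening (an $X$-marked source vertex may only cover $X$- and $K_0$-marked target vertices, while a $K_1$-marked source vertex must cover a singleton). Such objects can be linked to the rest of the slice only through zigzags passing through an intermediate $K_0$-re-marking (using the tree-preserving morphisms from $K_1$- and from $X$-marked variants into the $K_0$-marked one), and at each such step one must verify that the intermediate tree is still minimal and still has at most $n$ non-$X$ vertices. What your proposal needs, and does not supply, is a proof that \emph{every} object of the comma category is connected by such a zigzag to one of your special reductions; only after that reduction does your common-coarsening step (whose minimality bookkeeping is otherwise fine) finish the argument.
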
 

As a consequence diagram (\ref{it.po.1}) can be rewritten as:
\begin{equation}\label{it.po.2}
 \diagc{\varinjlim Z_{n}^{0}}{\varinjlim Z_{n}}{\varinjlim Z_{\leq,n-1}}{\varinjlim Z_{\leq,n}}{}{u_n}{}{}{}
\end{equation}

This means that the $n$-th map of transfinite composition (\ref{trans.comp.1}) can be expressed as a push-out along the canonical map:
\begin{equation}\label{canon.u}
\morp{u_n}{\varinjlim Z_{n}^0}{ \varinjlim Z_{n}}
\end{equation}

Note that $\TreeFm{C}_{S,n}^{0}$ and $\TreeFm{C}_{S,n}$ are fibred over $\TreeFm{C}_{S,n}^{1}$.

Indeed a functor $\morp{p}{\TreeFm{C}_{S,n}}{\TreeFm{C}_{S,n}^{1}}$ is defined as follows:
\begin{itemize}
\item for every object $(T, M_T)\in \TreeFm{C}_{S,n}$ the image $p((T,M_T))$ is $(T, M'_T)$, where for every $v\in \verx{T}$ the marking $M'_T(v)$ is $X$ if $M_T(v)=X$ and is $K_1$ otherwise.
\item for every morphism $\morp{f}{(T,M_T)}{(S,M_S)}$ the image $p(f)$ is $\theta_f$. 
\end{itemize}

It is easy to verify that this is a good definition and that $p$ is indeed a fibration or equivalently $\opc{p}$ is a opfibration.

For every $n\in \N$ we define the $n$-cube category $\square_n$ as the cartesian product $\Delta[1]^n$; the punctured $n$-cube category $\boxtimes_n$ is its full subcategory spanned by all
the objects different from the final object $(1,1,\dots,1)$. 

For every $T\in \TreeFm{C}_{S,n}^1$ the fiber $p^{-1}(T)$ is isomorphic to $\opc{\square_n}$ (which is actually isomorphic to $\square_n$).

The restriction $p_0$ of $p$ to $\TreeFm{C}_{S,n}^{0}$ is fibred over $ \TreeFm{C}_{S,n}^1$ and the fiber $p_0^{-1}(T)$ is isomorphic to $\opc{\boxtimes_n}$.

As a consequence the colimit of $Z_n^0$ and $Z_n$ can be calculated as the colimit indexed by $\TreeFm{C}_{S,n}^{1\, \mathrm{op}}$ of the colimits calculated on the fibers (\emph{cf.} \cite{He97}): 

\[
 \varinjlim Z_{n}\simeq \underset{\!\! T\in \TreeFm{C}_{S,n}^{1\,\mathrm{op}}}\varinjlim \underset{\, t\in p^{-1}(T)^{\mathrm{op}}} \varinjlim Z(t) 
\]
\[
 \varinjlim Z_{n}^{0}\simeq \underset{\!\! T\in \TreeFm{C}_{S,n}^{1\, \mathrm{op}}}\varinjlim \underset{t\in p_0^{-1}(T)^{\mathrm{op}}} \varinjlim Z(t) 
\]
thus we rewrite the map (\ref{canon.u}) as
\begin{equation}\label{canon.u2}
 u_n\simeq \underset{T\in \TreeFm{C}_{S,n}^{1\, \mathrm{op}}} \varinjlim u_{n,T} 
\end{equation}
where for every $T\in \TreeFm{C}_{S,n}^{1}$ the map $\morp{u_{n,T}}{\underset{t\in \opc{p_0^{-1}(T)}} \varinjlim Z(t)}{\underset{t\in \opc{p^{-1}(T)}} \varinjlim Z(t)}$ is the canonical one induced by the fully faithful
inclusion $\arr{p_0^{-1}(T)}{p^{-1}(T)}$.

\subsection{Description of $u_n,T$}

Let us fix an element $(T,M_T)\in \TreeFm{C}_{S,n}^{1}$ (here we explicit the marking map for convenience). Let $v_1,v_2,\dots, v_n \in \verx{T}$ be the $n$ vertices
 marked by $K_1$.

The element of $p^{-1}(T,M_T)$ are all $(S,M_S)\in \TreeFm{C}_{S,n}$ such that $S=T$ and for every $v\in \verx{T}$ one has $M_S(v)=X$ if and only if $M_T(v)=X$;
in other words all the $n$ vertices that are marked by $K_1$ in $T$ have to be marked by $K_0$ or $K_1$.
 
For every $a=(a_1\dots,a_n)\in \square_n$ let $\morp{M_{T,a}}{\verx{T}}{\{X,K_0,K_1\}}$ be the unique marking such that $M_{T,a}(v_m)=K_{a_m}$ for every $m\in [n]$ and $M_{T,a}(v)=X$ for every other vertex.  
Then there is an isomorphism of categories
\[
 \gfun{s_T}{\opc{\square_n}}{p^{-1}(T,M_T)}{a}{(T,M_{T,a})}
\]
which assigns to each morphism $\morp{g}{a}{b}$ in $\opc{\square_n}$ the unique tree-preserving morphism $\morp{s_T(g)}{(T,M_{T,a})}{(T,M_{T,b})}$.

Let $\morp{Z_T}{\square_n}{\Mm}$ be the composite $Z\circ \opc{s_T}$; then for every $b\in \square_n$
\[
 Z_T(b)\simeq (\underset{m\in [n]}\bigotimes K_{b_m}(\arity{v_m}))\otimes (\!\!\!\!\!\!\!\!\!\! \underset{v\in \verx{T}\backslash \{v_1,\dots,v_n\}} \bigotimes\!\!\!\!\!\!\!\!\!\! X(\arity{v})) 
\]
and for every map $\morp{l}{b}{c}$ in $\square_n$ the morphism $Z_T(l)$ is a tensor product of some $i(\arity{v_m})$'s and identities.

The category $\square_n$ has a final object $\bar{1}=(1,\dots,1)$ hence the codomain of $u_n$ is isomorphic to $Z_T(\bar{1})=Z(T,M_T)$. 
Furthermore 
\[
\morp{u_{n,T}}{\underset{b\in \boxtimes_n }\varinjlim Z_T(b)}{Z(T,M_T)} 
\]
can be calculated as an iterated pushout-product along the $i(\arity{v_m})$'s tensored with $\underset{v\in \verx{T}\backslash \{v_1,\dots,v_n\}} \bigotimes\!\!\!\!\!\!\!\!\!\! X(\arity{v})$ by a well known argument.

This implies in particular that:
\begin{lemma}\label{mon.sat.cube}
 For every $T\in \TreeFm{C}_{S,n}^{1}$ and $n\in\N$ the morphism $u_{n,T}$ belongs to the monoidally saturated class of the set of maps $\{i(S)\ |\ S\in \Sqofc{C}\}$.
\end{lemma}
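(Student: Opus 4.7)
The plan is to combine the preceding identification of $u_{n,T}$ as an iterated Leibniz (pushout-product) tensor with two standard closure properties of monoidally saturated classes. Recall that the paragraph immediately preceding the lemma identifies
$$
u_{n,T} \;\cong\; \bigl(i(\arity{v_1}) \,\square\, i(\arity{v_2}) \,\square\, \cdots \,\square\, i(\arity{v_n})\bigr) \otimes \mathrm{id}_W,
$$
where $W = \bigotimes_{v \in \verx{T} \setminus \{v_1,\ldots,v_n\}} X(\arity{v})$ and $\square$ denotes the Leibniz tensor of morphisms in $\Mm$.

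Let $\mathcal{K}$ denote the monoidal saturation of $\{i(S) \mid S \in \Sqofc{C}\}$. Two closure properties of $\mathcal{K}$ then suffice to conclude: closure under tensoring with an arbitrary object (built into the definition of monoidal saturation), and closure under Leibniz tensor. The latter is the classical pushout-product closure property. One verifies it by the standard reduction: fixing $g$, the class $\{f : f \square g \in \mathcal{K}\}$ is itself saturated, because a presentation of $f$ as a pushout, retract, or transfinite composition of some $f'$ transfers via a routine diagram chase to the corresponding presentation of $f \square g$ relative to $f' \square g$; a symmetric statement for $g$ reduces the claim to the base case of Leibniz tensors of generators. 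In our situation, the only non-trivial generator is a single component $k$ of $i$ — all other $i(S)$ are identities on the initial object, so any Leibniz tensor involving them is itself an identity on the initial object, hence trivially in $\mathcal{K}$. The remaining pushout-products of $k$ with itself are assembled directly from $k$ tensored with its source and target via iterated pushouts, and therefore stay inside $\mathcal{K}$.

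Applying the Leibniz-tensor closure inductively on $n$ then places $i(\arity{v_1}) \square \cdots \square i(\arity{v_n})$ in $\mathcal{K}$, and finally tensoring with the object $W$ preserves $\mathcal{K}$, yielding $u_{n,T} \in \mathcal{K}$ as required. The only delicate step is the verification of the pushout-product closure lemma; the rest of the argument is a routine induction combined with an appeal to the definition of monoidal saturation.
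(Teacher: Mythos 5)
Your identification of $u_{n,T}$ as $\bigl(i(\arity{v_1})\,\square\,\cdots\,\square\,i(\arity{v_n})\bigr)\otimes \mathrm{id}_W$ agrees with the paper, which states this lemma without a separate proof, as an immediate consequence of that description; and your two-variable reduction is sound: for fixed $g$, the class $\{f \mid f\,\square\, g \in \mathcal{K}\}$ is indeed monoidally saturated, so everything does reduce to Leibniz tensors of the generators. The gap is in your base case. You claim that $k\,\square\, k$ is ``assembled directly from $k$ tensored with its source and target via iterated pushouts.'' This is not so. Writing $k\colon A\to B$ and $P = (A\otimes B)\cup_{A\otimes A}(B\otimes A)$, pushouts of $k\otimes A$ and $A\otimes k$ only produce the corner maps $A\otimes B\to P$ and $B\otimes A\to P$; the Leibniz map $P\to B\otimes B$ itself is not exhibited as a pushout, transfinite composite, or retract of maps of the form $X\otimes k$. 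Composing a corner map with it merely recovers $k\otimes B$, and saturated classes admit no left-cancellation, so nothing follows. If your assembly worked, then for \emph{any} set $K$ in \emph{any} closed monoidal category the monoidal saturation of $K$ would automatically contain $K\,\square\, K$ --- in particular the cofibration half of the pushout-product axiom, in the form $I\,\square\, I\subseteq I^{\otimes}$, would be a theorem rather than an axiom to be verified. It is precisely at this step that an input beyond bare saturation is required.

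The honest base-case input, and what the paper's ``well known argument'' implicitly rests on, is the monoidal model structure: in the intended application the components $i(S)$ are (trivial) cofibrations of $\Top$, so their iterated pushout-products are again (trivial) cofibrations by the pushout-product axiom; equivalently, for the use made of this lemma one checks directly that the target class $\mathcal{T}_1$ of $T_1$-closed inclusions is closed under pushout-products (for $f\,\square\, g$ the complement of the image is $(Y\setminus f(X))\times(W\setminus g(Z))$, whose points are closed) in addition to being monoidally saturated. Note also that your specialization ``the only non-trivial generator is a single component $k$ of $i$'' covers only the case where $i$ is a generating cofibration $\iota_s(k)$; the final lemma of this appendix applies the present statement to an \emph{arbitrary} local cofibration $i$, where many components $i(S)$ are non-trivial, so the base case genuinely involves Leibniz tensors of distinct non-identity generators and cannot be dismissed as you do. You should either restate the conclusion as membership in a class known to be closed under pushout-products (such as $\mathcal{T}_1$, or the $\otimes$-cofibrations), or add the pushout-product axiom for the components $i(S)$ as an explicit hypothesis in the base case.
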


\subsection{$u_n$ as quotient of equivariant maps}
The category $\TreeFm{C}_{S,n}^{1}$ is a groupoid, in facts all its morphisms are marking-preserving isomorphisms of trees. Furthermore each Hom-set is finite, thus each
connected component of $\TreeFm{C}_{S,n}^{1}$ is equivalent to a finite group. Let $\mathcal{G}_S$ be the set of connected components of $\TreeFm{C}_{S,n}^{1}$ and
suppose that a representative $T_G\in G$ is chosen for every $G\in \mathcal{G}_S$.

The morphism $u_n$ (formula \ref{canon.u2}) is then isomorphic to the coproduct of maps between orbits:
\begin{equation}
 u_n=\underset{G\in\mathcal{G}_S}\coprod (u_{n,T_G}/ G)
\end{equation}

\begin{rmk}
 Every path component of $G\in \TreeFm{C}_{S,n}^{1}$ is equivalent to the group of automorphisms of its representative $T_G$. Suppose $T_G=(T,\lambda_T,M_T,\tau_T)$,
then an automorphism of $T_G$ is just a (non-planar) automorphism of $T$ (a $C$-tree) preserving the order on the leaves $\tau_T$, the marking $M_T$ (which takes values in $\{X,K_1\}$) and whose change of planar structure is trivial in
vertices marked by $K_1$. Note that if $T$ does not have vertices of arity $0$ then there are no non-trivial automorphism of $T_G$, since in this case the group
of automorphism of $T$ acts freely on the set of linear orders on the leaves of $T$. 
\end{rmk}

We remark that our decomposition of $i_\alpha$ in a transfinite composition of push-outs is similar to the one obtained in \cite[Sections 5.8-5.11]{BM03} where the push-out along a free map generated by a map of collections is considered.

\subsection{$\mathcal{T}_1$ admissibility of $\Opop{C}$ in $\Top$}

We can now prove that $\Opop{C}$ is $\mathcal{T}_1$ is admissible in $\Top$. Thanks to Proposition \ref{path.admis} and Proposition \ref{BMstone} we are reduced to prove the following:

\begin{lemma}
 Suppose a diagram as (\ref{free.diag}) is given in $\EOperfc{\Top}{C}$. If $i$ is a local cofibration then $i_\alpha$ is a local $\mathcal{T}_1$-morphism.
\end{lemma}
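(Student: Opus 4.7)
The plan is to leverage the explicit analysis already carried out in the appendix: the map $i_\alpha(S)$ has been decomposed as the transfinite composition (\ref{trans.comp.1}), whose $n$-th step fits into the push-out square (\ref{it.po.2}) along the canonical map $u_n$, and each $u_n$ is in turn expressed as a coproduct
\[
 u_n \simeq \underset{G\in\mathcal{G}_S}\coprod (u_{n,T_G}/G)
\]
of quotients by finite group actions of the maps $u_{n,T_G}$ studied above. So the task reduces to showing that each of these basic building blocks is in $\mathcal{T}_1$, and that $\mathcal{T}_1$ is closed under the operations used in the assembly.

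First I would recall that, in $\Top$, every cofibration is a $T_1$ closed inclusion, so the assumption that $i$ is a local cofibration in $\Top^{\Sqofc{C}}$ means that $i(S)\in \mathcal{T}_1$ for every $S\in \Sqofc{C}$. Since $\mathcal{T}_1$ is monoidally saturated (closed under push-outs, transfinite compositions, retracts and tensor products with arbitrary objects), Lemma \ref{mon.sat.cube} immediately gives $u_{n,T_G}\in \mathcal{T}_1$ for every $n\in \N$ and every $G\in \mathcal{G}_S$.

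The main obstacle, and the one genuinely specific to $\Top$, is the passage from $u_{n,T_G}$ to its quotient $u_{n,T_G}/G$ by the action of the finite automorphism group of $T_G$. Here I would argue as follows: because $G$ is finite, the action on the codomain of $u_{n,T_G}$ is automatically proper; the orbit projection is therefore a closed map, and the image of a $T_1$ closed inclusion under the quotient map from source and target is still a closed inclusion whose complement consists of orbits of closed points, hence of closed points in the quotient (using the weak Hausdorff assumption to identify orbits with closed subspaces). This is the step I would expect to cost the most care, and it is essentially the $\Top$-analogue of \cite[Section 5.8--5.11]{BM03}; in particular, one verifies directly on the pushout-product description of $u_{n,T_G}$ that the $G$-action permutes coordinates coming from the $n$ vertices $v_1,\dots,v_n$ and that the orbit spaces inherit the $T_1$-closed inclusion property from each pushout-product factor.

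Granted this, the conclusion is immediate: coproducts of elements of $\mathcal{T}_1$ lie in $\mathcal{T}_1$, so each $u_n\in \mathcal{T}_1$; each push-out in the transfinite tower (\ref{trans.comp.1}) is then in $\mathcal{T}_1$ by the saturation property; and finally the transfinite composition $i_\alpha(S)$ lies in $\mathcal{T}_1$ for every $S$, which is precisely what it means for $i_\alpha$ to be a local $\mathcal{T}_1$-morphism.
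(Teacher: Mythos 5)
Your proposal is correct and follows essentially the same route as the paper: the reduction via the transfinite composition (\ref{trans.comp.1}) and the push-outs along $u_n$, the appeal to Lemma \ref{mon.sat.cube} together with monoidal saturation of $\mathcal{T}_1$ to handle each $u_{n,T_G}$, and the passage to quotients by the finite automorphism groups. The quotient step you single out as the delicate one is precisely the paper's Lemma \ref{T1.G}, proved there by the same observation you make (finiteness of $G$ makes the orbit projections closed, the quotient of the inclusion stays a closed inclusion, and points of the complement are images of closed points under a closed map, hence closed).
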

\begin{proof}
For every $S\in \Sqofc{C}$ the map $i_{\alpha}(S)$ can be expressed as the transfinite composite of (\ref{trans.comp.1}). For every $n\in\N$ the map $j_n$ is a
push-out of $u_n$ (\emph{cf.} (\ref{it.po.1}). Whereas the class of $T_1$ closed inclusions is saturated it is sufficient to show that $u_n$ belongs to this class
for every $n\in \N$. From Lemma \ref{mon.sat.cube} we deduce that $u_{n,T}$ is a $T_1$ closed inclusion for every $T\in \TreeFm{C}_{S,n}^{1}$, since the class of 
$T_1$ closed inclusions is monoidally saturated. 
We can conclude that $u_n$ is a $T_1$ closed inclusion by applying Lemma \ref{T1.G}.
\end{proof}

\begin{lemma}\label{T1.G}
Let $G$ be a finite discrete group. The quotient of a $G$-map in $\Top$ whose underlying map is in $\mathcal{T}_1$ is in $\mathcal{T}_1$.  
\end{lemma}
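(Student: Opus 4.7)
The strategy is to compare the given $G$-equivariant $T_1$-closed inclusion $f\colon X\to Y$ with its quotient via the commutative square
\[
\xymatrix{X \ar[r]^{f} \ar[d]_{q_X} & Y \ar[d]^{q_Y} \\ X/G \ar[r]_{f/G} & Y/G}
\]
and to deduce each defining property of a $T_1$-closed inclusion for $f/G$ from the corresponding property of $f$, together with the finiteness of $G$. The proof will have two parts: first, that $f/G$ is a closed embedding; second, that every point in its complement is closed.

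The first ingredient I would collect is the closedness of the orbit projections. Since $G$ is finite, for any closed $C\subseteq Y$ the saturation $q_Y^{-1}(q_Y(C))=\bigcup_{g\in G}gC$ is a finite union of closed sets and hence closed, so $q_Y(C)$ is closed in $Y/G$; the same holds for $q_X$. This is the one spot that uses finiteness of $G$ in an essential way.

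Next, I would verify that $f/G$ is an injective closed map. Injectivity is immediate from equivariance: if $g\cdot f(x_1)=f(x_2)$ then $f(gx_1)=f(x_2)$, and since $f$ is injective $gx_1=x_2$. Closedness follows from the previous paragraph by chasing around the square: a closed $D\subseteq X/G$ pulls back along $q_X$ to a closed set, pushes forward via the closed inclusion $f$ to a closed set in $Y$, and maps down along $q_Y$ to $(f/G)(D)$, which is therefore closed. A continuous injective closed map is automatically a homeomorphism onto its (closed) image, so $f/G$ is a closed inclusion.

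For the $T_1$ condition, I would fix $[y]\in Y/G\setminus (f/G)(X/G)$ and note that the orbit $Gy$ is disjoint from $f(X)$, so by hypothesis every point of $Gy$ is a closed point of $Y$. Thus $q_Y^{-1}(\{[y]\})=Gy$ is a finite union of closed points of $Y$, hence closed in $Y$, which forces $\{[y]\}$ to be closed in $Y/G$. The only subtle point I anticipate is the need to confirm that the quotient computed in $\Top$ (compactly generated weak Hausdorff spaces) agrees as a space with the naive set-theoretic orbit space equipped with the quotient topology; but for a finite discrete group this is standard and so does not pose a real obstacle.
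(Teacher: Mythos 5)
Your proof is correct and takes essentially the same route as the paper's: the same commutative square, finiteness of $G$ to make the orbit projections $\pi_X,\pi_Y$ closed, the deduction that $f/G$ is a closed inclusion, and the observation that a point $[y]$ outside the image lifts to closed points of $Y\setminus f(X)$, whose (finite, hence closed) orbit forces $\{[y]\}$ to be closed. You merely fill in the details the paper dismisses as ``easy to check'' (injectivity via equivariance, closedness by chasing the square, and the saturation of $f(X)$), which only strengthens the argument.
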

\begin{proof}
\[
 \diagc{X}{X/G}{Y}{Y/G}{f}{\pi_X}{f/G}{\pi_Y}{}
\]
Note that since $G$ is finite $\pi_X$ and $\pi_Y$ are both closed.
It is easy to check that $f/G$ is a closed inclusion. 
If $[y]\in Y/G$ belongs to $(Y/G)\backslash (X/G)$ then $y$ belongs to $Y\backslash X$ hence it is closed. Since $\pi_Y$ is a closed map we conclude that $[y]$ is closed as well.
  
\end{proof}

\subsection{Tameness}
We would like to stress that the category $\TreeFm{C}^{1\, \mathrm{op}}$ is final in the category $\mathbf{T}^{T+1}$ defined in \cite[Section 7]{BB13} for the (polynomial) monad $T$ associated to $\Opop{C}$. 
More in general suppose we have a monoidal bicomplete category $\Mm$, a polynomial monad $T$ with set of colours $C$  and a class of morphism $A$ in $\Mm$ monoidally saturated and closed respect to colimits indexed by $\mathbf{T}^{T+1}$.
Suppose that $i$ is a local $A$-morphism in $\Mm^{C}$. It can be proven that in a push-out diagram in the category $\Alg{T}{\Mm}$ along the free map $F_{T}(i)$ like (\ref{free.diag}) the map $i_\alpha$ is a local $A$-morphism.

By definition a polynomial monad is tame if $\mathbf{T}^{T+1}$ has a final object in every connected component so the condition on $A$ to be closed respect to colimits over $\mathbf{T}^{T+1}$ becomes empty.
In the case of $\Opop{C}$ we have seen that in $\mathbf{T}^{\Opop{C}+1}$ each component has a final subcategory which is equivalent to a group.       
\bibliographystyle{plain}
\bibliography{eopbib}

\begin{thebibliography}{10}

\bibitem{SGA1}
{\em Rev\^etements \'etales et groupe fondamental ({SGA} 1)}.
\newblock Documents Math\'ematiques (Paris) [Mathematical Documents (Paris)],
  3. Soci\'et\'e Math\'ematique de France, Paris, 2003.
\newblock S{\'e}minaire de g{\'e}om{\'e}trie alg{\'e}brique du Bois Marie
  1960--61. [Algebraic Geometry Seminar of Bois Marie 1960-61], Directed by A.
  Grothendieck, With two papers by M. Raynaud, Updated and annotated reprint of
  the 1971 original [Lecture Notes in Math., 224, Springer, Berlin; MR0354651
  (50 \#7129)].

\bibitem{BB13}
M.~{Batanin} and C.~{Berger}.
\newblock {Homotopy theory for algebras over polynomial monads}.
\newblock {\em ArXiv e-prints}, May 2013.

\bibitem{BM03}
Clemens Berger and Ieke Moerdijk.
\newblock Axiomatic homotopy theory for operads.
\newblock {\em Comment. Math. Helv.}, 78(4):805--831, 2003.

\bibitem{BM05}
Clemens Berger and Ieke Moerdijk.
\newblock Resolution of coloured operads and rectification of homotopy
  algebras.
\newblock In {\em Categories in algebra, geometry and mathematical physics},
  volume 431 of {\em Contemp. Math.}, pages 31--58. Amer. Math. Soc.,
  Providence, RI, 2007.

\bibitem{BM12}
Clemens Berger and Ieke Moerdijk.
\newblock On the homotopy theory of enriched categories.
\newblock {\em Q. J. Math.}, 64(3):805--846, 2013.

\bibitem{Bo08}
Francis Borceux.
\newblock Handbook of categorical algebra. 2.
\newblock 51:xviii+443, 1994.
\newblock Categories and structures.

\bibitem{CM11}
Denis-Charles Cisinski and Ieke Moerdijk.
\newblock Dendroidal sets and simplicial operads.
\newblock {\em Journal of Topology}, 6(3):705--756, 2013.

\bibitem{EM06}
A.~D. Elmendorf and M.~A. Mandell.
\newblock Rings, modules, and algebras in infinite loop space theory.
\newblock {\em Adv. Math.}, 205(1):163--228, 2006.

\bibitem{GVR12}
Javier~J Guti{\'e}rrez and Rainer~M Vogt.
\newblock A model structure for coloured operads in symmetric spectra.
\newblock {\em Mathematische Zeitschrift}, 270(1-2):223--239, 2012.

\bibitem{Ha10}
John~E. Harper.
\newblock Homotopy theory of modules over operads and non-{$\Sigma$} operads in
  monoidal model categories.
\newblock {\em J. Pure Appl. Algebra}, 214(8):1407--1434, 2010.

\bibitem{He97}
Claudio Hermida.
\newblock Colimit decomposition for diagrams indexed by a cofibred category.
\newblock {\em unpublished work}, 1997.

\bibitem{Ho99}
Mark Hovey.
\newblock {\em Model categories}, volume~63 of {\em Mathematical Surveys and
  Monographs}.
\newblock American Mathematical Society, Providence, RI, 1999.

\bibitem{Ko11}
Joachim Kock.
\newblock Polynomial functors and trees.
\newblock {\em Int. Math. Res. Not. IMRN}, (3):609--673, 2011.

\bibitem{MW07}
Ieke Moerdijk and Ittay Weiss.
\newblock Dendroidal sets.
\newblock {\em Algebr. Geom. Topol.}, 7:1441--1470, 2007.

\bibitem{Mu11}
Fernando Muro.
\newblock Homotopy theory of nonsymmetric operads.
\newblock {\em Algebr. Geom. Topol.}, 11(3):1541--1599, 2011.

\bibitem{Ro11}
M.~{Robertson}.
\newblock {The Homotopy Theory of Simplicially Enriched Multicategories}.
\newblock {\em ArXiv e-prints}, November 2011.

\bibitem{Ro94}
Agust{\'{\i}} Roig.
\newblock Model category structures in bifibred categories.
\newblock {\em J. Pure Appl. Algebra}, 95(2):203--223, 1994.

\bibitem{SS00}
Stefan Schwede and Brooke~E. Shipley.
\newblock Algebras and modules in monoidal model categories.
\newblock {\em Proc. London Math. Soc. (3)}, 80(2):491--511, 2000.

\bibitem{Sp01}
Markus Spitzweck.
\newblock Operads, algebras and modules in model categories and motives.
\newblock {\em Ph. D. thesis (Universit{\"a}t Bonn)}, 2001.

\bibitem{Sta12}
Alexandru~Emil Stanculescu.
\newblock Bifibrations and weak factorisation systems.
\newblock {\em Appl. Categ. Structures}, 20(1):19--30, 2012.

\bibitem{SZ12}
S.~{Szawiel} and M.~{Zawadowski}.
\newblock {Theories of analytic monads}.
\newblock {\em ArXiv e-prints}, April 2012.

\bibitem{We07}
Ittay Weiss.
\newblock {\em Dendroidal sets}.
\newblock PhD thesis, PhD thesis, Universiteit Utrecht, 2007.

\end{thebibliography}
\end{document}